\theoremstyle{plain}
\newtheorem{thm}{Theorem}[section]
\newtheorem{lem}[thm]{Lemma}
\newtheorem{rem}{Remark}[section]
\numberwithin{equation}{section}
\begin{document}

\title[a compressible fluid-particle interaction model]
{Global strong solutions to a compressible fluid-particle interaction model with density-dependent friction 
force}

\author[F.  Li]{Fucai Li}
\address{School of Mathematics, Nanjing University, Nanjing
 210093, P. R. China}
\email{fli@nju.edu.cn}

\author[J. Ni]{Jinkai Ni$^*$}  \thanks{$^*$\! Corresponding author}
\address{School  of Mathematics, Nanjing University, Nanjing 
 210093, P. R. China}
\email{jinkaini123@gmail.com}

\author[M. Wu]{Man Wu}
\address{Department of Mathematics, Nanjing Audit University, Nanjing
 211815, P. R. China}
\email{manwu@nau.edu.cn}

\begin{abstract}
We investigate the Cauchy problem for a   fluid-particle interaction model
in  $\mathbb{R}^3$.   This model consists of the  compressible  barotropic  Navier–Stokes equations and the Vlasov-Fokker-Planck  equation coupled together via  the density-dependent friction force. Due to the strong coupling caused by the friction force, it is  a challenging problem to construct the global existence and  optimal decay rates of  strong solutions.
 In this paper,  by assuming that the $H^2$-norm of the initial data 
is sufficiently small, we establish the global well-posedness of strong solutions. 
Furthermore, if  the $L^1$-norm of initial data
is bounded, then we achieve the optimal decay rates of strong solutions  and their gradients in $L^2$-norm. 
The proofs  rely on developing  refined energy estimates and  exploiting the  frequency decomposition
method. In addition, for the periodic domain case, our global strong solutions decay exponentially. 

\end{abstract}

\keywords{compressible Navier–Stokes equations, Vlasov–Fokker–Planck
equation, density-dependent friction 
force, global well-posedness, optimal decay rates.}

\subjclass[2020]{76N06, 35Q84, 76N10, 35B40 }

\maketitle

\setcounter{equation}{0}
 \indent \allowdisplaybreaks

\section{Introduction}\label{Sec:intro-resul}
\subsection{Fluid-particle models}
Fluid-particle models, where dispersive particles are suspended within a dense fluid, have been extensively applied in various fields, such as    
 dynamics of sprays \cite{BBBDLLT-irma-2005}, combustion theory \cite{Wfa-1958,Wfa-1985}, chemical engineering \cite{CP-siam-1983}, and biomedical sprays \cite{BBJM-esaim-2005}. From a statistical viewpoint,
the behavior of particles is typically characterized by a probability density distribution
function $F=F(t,x,v)\geq 0$ for the time $t
>0$, the position $x\in\Omega\subset \mathbb{R}^3$ and the velocity of the particles $v\in\mathbb{R}^3$,  which  typically adheres to  the  Vlasov–Fokker–Planck (VFP) equation (\!\!\cite{MV-MMMAS-2007}):
\begin{align}\label{B1}
\partial_{t}F+v\cdot\nabla F+\mathrm{div}_{v}[F_dF-\nabla_{v}F]=0.  
\end{align}
Here $F_d$ denotes  the friction force (also known as  drag force)  
\begin{align}\label{fo-aaaa}
   F_d=F_0(u-v) 
\end{align} 
with some  constant $F_0>0$ (usually being chosen as one for simplicity), and 
the unknown function $u=(u_1,u_2,u_3)^{\top}\in\mathbb{R}^3$ represents
the velocity of the fluid.

The formula \eqref{fo-aaaa}  is widely used in  incompressible  homogeneous fluids \cite{CDM-krm-2011,GJV-IUMJ-2004,GJV-IUMJ-2004-2,CKL-JDE-2011,
GHMZ-siamj-2009,Hankwan-2022-PMP,HMM-ARMA-2020},    incompressible inhomogeneous fluids \cite{EHM-nonlinearity-2021,Danchin-kato,SWYZ-24,SWYZ-23,SY-20,JLN-2025-JMP,LNW-SAPM-2025}, and compressible fluids \cite{CKL-JHDE-2013,DL-KRM-2013,Ww-CMS-2024,Mu-Wang-20,MV-CMP-2008,MV-MMMAS-2007,Li-Ni-Wu}.
As Mellet and Vasseur  pointed out in \cite{MV-CMP-2008}, the choice of \eqref{fo-aaaa}
may not be the most
precise from a physical perspective, especially when the density of the fluid is varying, 
for example, incompressible inhomogeneous fluids   and compressible fluids. Thus, it is more appropriate
to assume that $F_d$ depends on the fluid density $n=n(t,x)$, say,  
\begin{align}\label{fo-aa}
 F_d=F_0n(u-v),  
\end{align} 
 which carries greater physical significance.  However, due to the fluid density $n$ involved in \eqref{fo-aa},
 some trilinear terms in fluid-particle models appear. Hence  it is more challenging  to obtain   global solutions 
 of  the  fluid-particle models 
in this situation and only a few rigorous mathematical results are available, see  \cite{Wang-Yu2015,BD-JHDE-2006,Choi-Kwon-15,LMW-SIAM-2017}. 
Wang and Yu \cite{Wang-Yu2015} obtained the global existence of weak solutions to the following system
\begin{equation}\label{wang}
\left\{\begin{aligned}
&\partial_{t}F+v\cdot\nabla F+n\mathrm{div}_{v}[(u-v)F]=0,\\
& \partial_{t} n+\mathrm{div}(nu)=0,  \quad \mathrm{div}u=0,  \\
&\partial_{t}(nu)+\mathrm{div}(nu\otimes u)+\nabla \pi =-n\int_{\mathbb{R}^{3}}(u-v)F {\rm d}v {,} 
 \end{aligned}
 \right.
\end{equation}
 with large data in a bounded domain with the reflection boundary condition. Choi and Kwon \cite{Choi-Kwon-15} obtained the   strong solutions on $[0,T]$ to \eqref{wang}
in  $\mathbb{T}^3$ and $\mathbb{R}^3$ for arbitrary $T>0$ when the initial data
is sufficiently small and regular. Moreover, they also showed that   the velocities of particles and fluid 
tend to the  aligned  values exponentially  provided that the local density
of the particles satisfies a certain integrability condition.
 Baranger and  Desvillettes \cite{BD-JHDE-2006} first obtained the   local-in-time  classical solutions
to the following model
\begin{equation}\label{B2}
\left\{\begin{aligned}
&\partial_{t}F+v\cdot\nabla F+n\mathrm{div}_{v}[(u-v)F]=0,\\
&\partial_{t} n+\mathrm{div}(nu)=0,  \\
&\partial_{t}(nu)+\mathrm{div}(nu\otimes u)+\nabla P =-n\int_{\mathbb{R}^{3}}(u-v)F {\rm d}v {,}   
 \end{aligned}
 \right.
\end{equation}
in $\mathbb{R}^N_x$ when the initial data are sufficiently smooth and their support has suitable properties. Carrillo,  Goudon, and  Lafitte  \cite{CGL-JCP-2008} introduced a modified version  of \eqref{B2}
by adding a viscous term in \eqref{B2}$_1$, i.e. changing $n\mathrm{div}_{v}[(u-v)F]$
to $ n\mathrm{div}_{v}[(u-v)F-\nabla_v F]$. However, no mathematical results on this modified model were presented in \cite{CGL-JCP-2008}. By adding a viscous term $-\mu \Delta u$ in the fluid equation \eqref{B2}$_3$ to the model introduced in \cite{CGL-JCP-2008},
Li, Mu, and Wang \cite{LMW-SIAM-2017} considered the following 
model:
\begin{equation}\label{I-1}
\left\{\begin{aligned}
&\partial_{t}F+v\cdot\nabla F+n\mathrm{div}_{v}[(u-v)F-\nabla_{v}F]=0,\\
&\partial_{t} n+\mathrm{div}(nu)=0,  \\
&\partial_{t}(nu)+\mathrm{div}(nu\otimes u)+\nabla P-\mu\Delta u =-n\int_{\mathbb{R}^{3}}(u-v)F {\rm d}v {,}   
 \end{aligned}
 \right.
\end{equation}
in the whole space $\mathbb{R}^3$ and a torus $\mathbb{T}^3$, respectively.  The pressure $P=P(t,x)$ depends only on
$n$ with $P^{\prime}(\cdot)>0$ (Recall the prototype   $P(n)=c_0 n^{\gamma}$ with $\gamma >1$ and $c_0 >0$
for the isentropic gas case). Additionally, the parameter $\mu>0$ is the viscous coefficient. They  addressed the global well-posedness of classical solutions in the $H^4$ framework when the initial data is a small perturbation of 
the global Maxwellian.   However, no discussions on optimal time-decay rates of 
classical solutions were included in \cite{LMW-SIAM-2017}. In fact, they showed that the solutions   have a   decay rate of  $(1+t)^{-1/2}$ in $L^\infty$-norm in $\mathbb{R}^3$ and decay rate in the   natural $L^2$-norm is missing.

In this paper, we revisit the model \eqref{I-1} in $\mathbb{R}^3$. Our purpose includes  two aspects: (1) diminishing the regularity of  initial data  from $H^4(\mathbb{R}^3)$ to $H^2(\mathbb{R}^3)$; (2) obtaining optimal time-decay rates of strong solutions   and    their gradients in the $L^2$-norm, and 
even in  $L^p$ norm with $p\in [2,6]$.

\subsection{Briefly review of related results}

As mentioned before, most studies on fluid-particle models focus on the  friction force with the form 
\eqref{fo-aaaa}. Below we briefly review some results on both the incompressible  and compressible 
models.

First, 
we consider the following incompressible   model:
\begin{equation}\label{B5}
\left\{\begin{aligned}
&\partial_{t}F+v\cdot\nabla F+\mathrm{div}_{v}[(u-v)F-\nabla_{v}F]=0,\\
&{\rm div}u=0,  \\
&\partial_{t}u+u\cdot\nabla u+\nabla \pi-\Delta u =-\int_{\mathbb{R}^{3}}(u-v)F {\rm d}v.  
 \end{aligned}
 \right.
\end{equation}
 {This model} \eqref{B5} is known as the incompressible
homogeneous Navier-Stokes-Vlasov-Fokker-Planck system. 
Goudon, et al.  \cite{GHMZ-siamj-2009} established the global existence of classical solutions in Sobolev space $H^s$ ($s\geq 2)$ to \eqref{B5} near an equilibrium on a torus $\mathbb{T}^3$.
Chae, Kang, and Lee \cite{CKL-JDE-2011} proved the global existence of weak solutions to \eqref{B5} in $\mathbb{R}^d$ (where $d = 2, 3$). Carrillo, Duan and Moussa \cite{CDM-krm-2011} achieved the time-decay rate of smooth solutions to the inviscid case of \eqref{B5} in $L^2$-norm.
If the $\mathrm{div}_{v}[(u-v)F-\nabla_{v}F]$ in \eqref{B5}$_1$ is changed to $\mathrm{div}_{v}[(u-v)F]$, i.e. the diffusion term $-\Delta_v F$ is neglected, the system \eqref{B5} becomes the so-called 
 Vlasov–Navier–Stokes system and there are  substantial  mathematical works  on it. 
Hamdache \cite{Hk-jjiam-1998}  first established the global existence and  long-time behavior of solutions for the Vlasov-Stokes system in a bounded domain.
Boudin, et al. \cite{BD-jneq-2009} obtained
the global existence of weak solutions to the 
  Vlasov–Navier–Stokes
system in a torus $\mathbb{T}^3$.
Recently, Han-Kwan, Moussa and Moyano \cite{HMM-ARMA-2020} studied large time behavior of the Vlasov-Navier–Stokes system on the torus, assuming that the initial  modulated energy is sufficiently small. Besides, large time behavior of small-data solutions to the Vlasov-Navier–Stokes system on the whole space $\mathbb{R}^3$ was studied in \cite{Hankwan-2022-PMP}. Ertzbischoff,  Han-Kwan, and  Moussa  \cite{EHM-nonlinearity-2021} 
investigated small smooth solutions for the
Vlasov-Navier–Stokes system in a bounded domain $\Omega\subset \mathbb{R}^3$.
Danchin \cite{Danchin-kato} obtained the global existence of Fujita-Kato type solutions and demonstrated that    if the   $L^1$-norm of initial velocity is sufficiently small, then the total energy of the system  decays to $0$ at the optimal rate of $t^{- {3}/{4}}$ in $L^2$-norm.
Goudon,  Jabin,  and  Vasseur \cite{GJV-IUMJ-2004,GJV-IUMJ-2004-2}, and  Han-Kwan and  Michel \cite{HK-M-2024}  studied the  
hydrodynamic limit to  the Vlasov-Navier-Stokes system in different regimes, see also  \cite{SWYZ-23,SWYZ-24}
on the recent progress on the  inhomogeneous incompressible Navier-Stokes-Vlasov(-Fokker-Planck) system.

Next,  we turn to the compressible model:
\begin{equation}\label{B6}
\left\{\begin{aligned}
&\partial_{t}F+v\cdot\nabla F+\mathrm{div}_{v}[(u-v)F-\nabla_{v}F]=0,\\
&\partial_tn+{\rm div}(nu)=0,  \\
&\partial_{t}(nu)+\mathrm{div}(nu\otimes u)+\nabla P-\Delta u =-\int_{\mathbb{R}^{3}}(u-v)F {\rm d}v.  
 \end{aligned}
 \right.
\end{equation}
Mellet and Vasseur \cite{MV-MMMAS-2007} established the global existence of weak solutions of \eqref{B6} in a bounded domain $\Omega\subset\mathbb{R}^3$.
By utilizing relative entropy method, they also conducted the asymptotic analysis of solutions in \cite{MV-CMP-2008}.
Additionally,
the global existence  and the
exponential decay of the classical solutions   to \eqref{B6} were obtained in
\cite{CKL-JHDE-2013}. Based
on the classical energy estimates,
Duan and Liu 
\cite{DL-KRM-2013} investigated the global well-posedness of classical solutions to the inviscid version  of \eqref{B6} (ignoring the term  $-\Delta u$ in \eqref{B6}$_3$)   in the framework of small perturbations.
Very recently, Wang \cite{Ww-CMS-2024} established the global strong solutions in the $H^2$ framework and derived the optimal decay rates for all derivatives of the solutions to \eqref{B6},  including  a term $\nabla{\rm div}u$ in the momentum equation  \eqref{B6}$_3$.
For more results regarding the compressible fluid-particle models, interested  readers can refer
\cite{CG-2006,Li-Ni-Wu,LLY22,Mu-Wang-20} and   references cited therein.

\subsection{Our  results and the strategies in the proofs}

The objective of this paper is to establish the global existence and optimal
time decay rate of strong solutions to the system \eqref{I-1}  near the equilibrium state $(F, n, u)\equiv(M,1,0)$,
where $$  M=M(v)={(2\pi)^{\frac{3}{2}}}e^{-\frac{|v|^{2}}{2}}$$
represents the global Maxwellian.  
We supplement the system \eqref{I-1} with  the initial data
\begin{equation}\label{I-2}
(F,n,u)|_{t=0}=(F_{0}(x,v),n_{0}(x),u_{0}(x)),\quad    (x, v)\in \mathbb{R}^3\times \mathbb{R}^3,
\end{equation}


By setting the standard
perturbation  $n=1+\rho$, $F=M+\sqrt{M}f$, we can rewrite the system \eqref{I-1} and \eqref{I-2} as
\begin{align}\label{I-3}
& \partial_{t}f+v\cdot\nabla_x f+u\cdot\nabla_{v}f-{\frac{1}{2}}u\cdot vf-u\cdot v\sqrt{M}\nonumber\\
&\qquad= \mathcal{L}f+\rho\Big(\mathcal{L}f-u\cdot\nabla_v f+\frac{1}{2}u\cdot vf+u\cdot v\sqrt{M}\Big),\\ \label{I-4}
&\partial_{t}\rho+(\rho+1){\rm div} u+\nabla\rho\cdot u=0,\\ \label{I-5}
&\partial_{t}u+u\cdot\nabla u+{\frac{P^{\prime}(1+\rho)}{1+\rho}}\nabla\rho-\frac{\mu\Delta u}{1+\rho}={b-u-au},
\end{align}
with initial data
\begin{equation}\label{I-6}
(f,\rho,u)|_{t=0}=(f_0(x,v),\rho_0(x),u_0(x))=\bigg(\frac{F_0(x,v)-M}{\sqrt{M}},n_0(x)-1,u_0(x)\bigg).
\end{equation}
Here, $\mathcal{L}$ is the linearized Fokker-Planck operator defined by
\begin{align}\label{A1.7}
\mathcal{L}f=\frac{1}{\sqrt{M}}\mathrm{div}_{v}
\left[M\nabla_{v}\left(\frac{f}{\sqrt{M}}\right)\right]=\Delta_{v}f-\frac{|v|^{2}}{4}f+\frac{3}{2}f,
\end{align}
and $a= a^f, b= b^f$ are corresponding moments of $f$ given by
\begin{align*}
a^{f}(t,x)=\int_{\mathbb{R}^{3}}\sqrt{M}f(t,x,v){\rm d}v,
\quad b^{f}(t,x)=\int_{\mathbb{R}^{3}}v\sqrt{M}f(t,x,v){\rm d}v.
\end{align*}

The main results of this paper can be stated as follows.
\begin{thm}\label{T1.1}
Assume that  the initial data  $(f_0, \rho_0, u_0)$
satisfy   that $\|f_0\|_{H_{x, v}^2}+\|(\rho_0, u_0)\|_{H^2} $  is sufficiently small and $F_0=M+$ $\sqrt{M} f_0 \geq 0$. Then the Cauchy problem \eqref{I-3}--\eqref{I-6} admits a unique global strong solution 
  $(f, \rho, u)$ satisfying $F=M+\sqrt{M} f \geq$ 0 and
\begin{gather*}
   f \in C([0, \infty) ; H_{x,v}^2), \quad\rho, u\in C([0, \infty) ; H^2), \\
  \sup _{t \geq 0}\big\{\|f(t)\|_{H_{x, v}^2}+\|(\rho, u)(t)\|_{H^2}\big\} \leq C\big(\|f_0\|_{H_{x, v}^2}+\|(\rho_0, u_0)\|_{H^2}\big),
\end{gather*}
for some constant $C>0$.    
\end{thm}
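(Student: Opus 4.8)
The plan is to follow the by-now-standard route for global small-data solutions of kinetic--fluid systems: construct a local solution, derive a closed \emph{a priori} energy estimate that is uniform in time, and patch the two together by a continuity argument. The local theory for \eqref{I-3}--\eqref{I-6} is essentially classical --- the $\rho$-equation is a linear transport equation once $u$ is frozen, the $u$-equation is uniformly parabolic once $\rho$ is frozen, and \eqref{I-3} with $(\rho,u)$ frozen is a linear VFP equation, so a linearized iteration converges in a low-order norm while staying bounded in $H^2_{x,v}\times H^2\times H^2$; uniqueness follows from an $L^2$-estimate on the difference of two solutions; and the positivity $F=M+\sqrt M f\ge 0$ is preserved because \eqref{I-1}$_1$ is, in the velocity variable, a parabolic equation with bounded coefficients, so the maximum principle applies. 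I would therefore present this part only briefly and concentrate on the uniform estimate.

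The core is to prove
\[
\frac{d}{dt}\,\mathcal E(t)\;+\;c\,\mathcal D(t)\;\le\;C\,\mathcal E(t)^{1/2}\,\mathcal D(t),
\]
where $\mathcal E(t)$ is equivalent to $\|f(t)\|_{H^2_{x,v}}^2+\|(\rho,u)(t)\|_{H^2}^2$ (after adding the small interactive correction terms described below) and the dissipation rate $\mathcal D(t)$ collects $\|u(t)\|_{H^2}^2$, $\|\nabla(\rho,u)(t)\|_{H^1}^2$, the velocity-weighted microscopic norm of $f-\mathbb P f$ in $H^2_{x,v}$ with weight $\langle v\rangle=(1+|v|^2)^{1/2}$ (here $\mathbb P$ denotes the orthogonal projection onto $\ker\mathcal L=\mathrm{span}\{\sqrt M\}$), together with the hydrodynamic moments of $f$, namely $\|b^f\|_{H^1}^2$ and $\|\nabla a^f\|_{H^1}^2$. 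Granting this, smallness of $\mathcal E(0)$ gives $\mathcal E'(t)+\tfrac c2\mathcal D(t)\le 0$, hence $\sup_{t\ge0}\mathcal E(t)\lesssim\mathcal E(0)$, which both extends the local solution to $[0,\infty)$ and furnishes the stated bound.

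Establishing that inequality requires assembling four pieces. First, \emph{hypocoercivity of the Fokker--Planck operator}: testing \eqref{I-3} and its $x$- and $v$-derivatives of order $\le 2$ against $f$ and using $\langle-\mathcal L g,g\rangle=\|\nabla_v g+\tfrac v2 g\|_{L^2_v}^2\gtrsim\|g-\mathbb P g\|_\nu^2$ controls the microscopic part of $f$. Second, the \emph{macro--micro decomposition}: the macroscopic part $a^f\sqrt M$ carries no self-dissipation, so one uses the local balance laws obtained by taking the $1$- and $v$-moments of \eqref{I-3} --- $\partial_t a^f+\mathrm{div}_x b^f=0$, and an evolution equation for $b^f$ with linear damping $-b^f$ and a coupling to $\nabla a^f$ through the second moment --- to recover $\|b^f\|_{H^1}$ and $\|\nabla a^f\|_{H^1}$ via the microscopic dissipation. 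Third, a \emph{pressure-weighted fluid energy}: using the functionals $\int\frac{P'(1+\rho)}{1+\rho}|\nabla^k\rho|^2+|\nabla^k u|^2$ for $k\le 2$, the leading density--velocity coupling $\pm P'(1)\int\rho\,\mathrm{div}\,u$ cancels between the $\rho$- and $u$-estimates, the linear damping $-u$ in \eqref{I-5} produces $\|u\|_{H^2}^2$, the viscosity produces $\|\nabla u\|_{H^2}^2$, and the drag coupling between the $b^f$-terms and the $u$-terms is handled by Cauchy--Schwarz against the already-gained dissipations. Fourth, \emph{recovery of density dissipation}: since $\|\rho\|^2$ itself is not dissipated (consistent with the absence of an $L^1$ hypothesis in this theorem), I would add to $\mathcal E$ a small multiple of the interactive functionals $\int\partial^\alpha\nabla\rho\cdot\partial^\alpha u\,dx$, $|\alpha|\le 1$; substituting the equations for $\partial_t\rho$ and $\partial_t u$ yields $-P'(1)\|\nabla\rho\|_{H^1}^2$ modulo terms absorbed into $\mathcal D$ and $\mathcal E^{1/2}\mathcal D$, supplying the missing dissipation.

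The step I expect to be the main obstacle is the nonlinear bookkeeping: showing that every term arising from the right-hand side of \eqref{I-3}, namely $\rho\big(\mathcal L f-u\cdot\nabla_v f+\tfrac12 u\cdot vf+u\cdot v\sqrt M\big)$, and from the variable coefficients $\frac{P'(1+\rho)}{1+\rho}\nabla\rho$ and $\frac{\mu\Delta u}{1+\rho}$ in \eqref{I-5}, is bounded by $C\,\mathcal E^{1/2}\mathcal D$, so that the cubic right-hand side can never overcome the quadratic dissipation for small data. The delicate points are: the apparent loss of a $v$-derivative in $\rho\,\mathcal L f$, which I would resolve by integrating by parts in $v$ and distributing the derivative, at the cost of the weighted microscopic dissipation; the velocity growth in $\rho\,u\cdot\nabla_v f$ and $\rho\,u\cdot vf$, which is precisely what the $\langle v\rangle$-weighted $H^2_{x,v}$-norm is designed to absorb (so that no smaller space would do); and the systematic use of $H^2(\mathbb R^3)\hookrightarrow L^\infty(\mathbb R^3)$ together with product estimates to place the low-regularity factor in $L^\infty$ --- all of which is tightest at the top ($k=2$) derivative level, where there is no room to spare, the term $\rho\,u\cdot v\sqrt M$ being the only one of these that is harmless, thanks to the Gaussian weight.
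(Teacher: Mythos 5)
Your overall architecture (local solution by linearization plus fixed point, maximum principle for $F\ge 0$, and a closed $H^2$ energy estimate built from Fokker--Planck coercivity, the macro--micro decomposition with moment equations, an interactive functional $\int\partial^\alpha\nabla\rho\cdot\partial^\alpha u$ to recover $\|\nabla\rho\|_{H^1}$, and weighted mixed $x$--$v$ derivatives to absorb $\rho\mathcal{L}f$ and $\rho\,u\cdot vf$) is exactly the paper's route. However, there is a genuine gap in the claimed structure of the dissipation. You assert that the damping in \eqref{I-5} "produces $\|u\|_{H^2}^2$" and that the $b$-moment equation carries "linear damping $-b^f$", so that $\mathcal D(t)$ contains $\|u\|_{H^2}^2$ and $\|b^f\|_{H^1}^2$ separately. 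This is not available for this system: the friction acts only on the \emph{relative} velocity. The damping term in \eqref{I-5} is $b-u$ (not $-u$), and the $v\sqrt M$-moment of \eqref{I-3} gives $\partial_t b_i+\partial_i a+\sum_j\partial_j\Gamma_{ij}(\{\mathbf I-\mathbf P\}f)=(1+\rho)(u_i-b_i)+\dots$, i.e.\ damping of $b$ toward $u$. When you test the $f$-equation with $f$ and the $u$-equation with $u$, the coercivity gain $|b|^2$ from $-\langle\mathcal Lf,f\rangle$ and the gain $|u|^2$ from the friction are exactly eaten by the cross terms coming from $u\cdot v\sqrt M$ and $b$, leaving only $\|b-u\|^2$ (plus $\|\nabla u\|^2$ from viscosity and $\|\{\mathbf I-\mathbf P\}f\|_\nu^2$). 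At the linearized level the zero-frequency mode of $b+u$ (total momentum) is neutral, so no energy inequality of the form $\frac{d}{dt}\mathcal E+c\,\mathcal D\le C\mathcal E^{1/2}\mathcal D$ with $\|u\|_{L^2}^2+\|b\|_{L^2}^2\subset\mathcal D$ can hold uniformly for $H^2$ data in $\mathbb R^3$; this is also why Theorem \ref{T1.1} carries no decay and why Sections 4--5 need the extra $\mathcal Z_1$ hypothesis, and why only in the torus case (Section 6) can $\|u\|,\|b\|$ be recovered, via conservation laws and Poincar\'e.

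The gap matters for your absorption step: you propose to handle the drag coupling "by Cauchy--Schwarz against the already-gained dissipations", but the gains you cite are precisely the ones that are not there. The repair is what the paper does: take $\mathcal D(t)\sim\|b-u\|_{H^2}^2+\|\nabla u\|_{H^2}^2+\|\nabla(a,b,\rho)\|_{H^1}^2+\sum_{|\alpha|\le2}\|\partial^\alpha\{\mathbf I-\mathbf P\}f\|_\nu^2+\sum_{|\alpha|+|\beta|\le2}\|\partial^\alpha_x\partial^\beta_v\{\mathbf I-\mathbf P\}f\|_\nu^2$, recover $\|\nabla(a,b)\|_{H^1}$ from the moment system (whose source is $(1+\rho)(u-b)$, controlled by $\|b-u\|_{H^1}$, not by $\|b\|$), and rewrite quadratic terms such as $\int a|u|^2$ and $\int\rho\,u\cdot b$ so that each factor appearing quadratically is a dissipated quantity (e.g.\ $\int a\,u\cdot(b-u)$ bounded by $\delta(\|b-u\|^2+\|\nabla a\|^2)$). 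With this corrected dissipation your outline closes in the same way as the paper's proof.
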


\begin{rem}
When addressing the energy estimate of higher-order derivatives of solutions, as per \cite[Proposition 4.2]{LMW-SIAM-2017}, the energy structure of density is $\frac{\sqrt{P^\prime(1+\rho)}}{1+\rho}\|\partial^\alpha\rho\|^2$. This results in the estimate of $\partial_t \rho$. Consequently, a fourth-order regularity of solutions is necessary (refer to the estimate of $I_{14}$ in \cite[Proposition 4.2]{LMW-SIAM-2017}). Unlike \cite{LMW-SIAM-2017}, the energy we have chosen in Lemma \ref{L3.2} is $P^{\prime}(1)\|\partial^\alpha\rho\|^2$. By means of a refined energy method, we can restrict our estimates in the $H^2$ framework.
\end{rem}

\begin{thm}\label{T1.2}
Under the assumption of Theorem \ref{T1.1}, if we further assume that
$\|(f_0, \rho_0, u_0)\|_{\mathcal{Z}_1}$ is bounded, then
\begin{align}
\|f\|_{L^2_v(H^2)}+\|(\rho,u)\|_{H^2} 
&\leq C(1+t)^{-\frac{3}{4}},\label{G1.8}\\
\|\nabla f\|_{L^2_v(H^1)}+\|\nabla(\rho,u)\|_{H^1}  &\leq C(1+t)^{-\frac{5}{4}},\label{G1.9}\\
 \|\nabla^2 f\|_{L^2_v(L^2)}+\|\nabla^2(\rho,u)\|_{L^2} &\leq C(1+t)^{-\frac{5}{4}},\label{G1.10}
\end{align} 
for some constant $C>0$ and any $t \geq 0$. Moreover, 
\begin{align}
\|f\|_{L^2_v(L^p)}+\|(\rho,u)\|_{L^p}&\leq C(1+t)^{-\frac{3}{2}(1-\frac{1}{p})},\label{G1.11}
 \end{align}
 holds for  $2\leq p\leq 6$, and while
 \begin{align}
 \|f\|_{L^2_v(L^p)}+\|(\rho,u)\|_{L^p}&\leq C(1+t)^{-\frac{5}{4}},\label{G1.12}
 \end{align}
 holds for $6\leq p\leq\infty$.
 \end{thm}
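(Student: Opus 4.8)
The plan is to deduce the decay rates from the uniform energy bounds obtained in the proof of Theorem~\ref{T1.1}, refined by a frequency decomposition of the linearized flow and a time‑weighted bootstrap for the nonlinear problem, and then to get the $L^p$‑rates for $p\ne 2$ by interpolation. Indeed, \eqref{G1.11}--\eqref{G1.12} are consequences of \eqref{G1.8}--\eqref{G1.10}: for $2\le p\le 6$ the Gagliardo--Nirenberg inequality $\|g\|_{L^p_x}\lesssim\|g\|_{L^2_x}^{1-\theta}\|\nabla g\|_{L^2_x}^{\theta}$ with $\theta=3(\tfrac12-\tfrac1p)$ gives the rate $-\tfrac34(1-\theta)-\tfrac54\theta=-\tfrac32(1-\tfrac1p)$, while for $6\le p\le\infty$ one uses $\|g\|_{L^6_x}\lesssim\|\nabla g\|_{L^2_x}$ together with $\|g\|_{L^\infty_x}\lesssim\|\nabla g\|_{L^2_x}^{1/2}\|\nabla^2 g\|_{L^2_x}^{1/2}$, both decaying like $(1+t)^{-5/4}$ by \eqref{G1.9}--\eqref{G1.10} (for the kinetic component one applies these inequalities in $x$ for fixed $v$ and then takes the $L^2_v$‑norm). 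So it suffices to prove \eqref{G1.8}--\eqref{G1.10}.

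\textbf{Linearized decay via frequency decomposition.} Write $U=(f,\rho,u)$ and $\partial_t U=\mathbb{A}U+\mathcal{N}(U)$, where $\mathbb{A}$ is the linear operator governing \eqref{I-3}--\eqref{I-5} near $(0,0,0)$ (it contains the Fokker--Planck dissipation $\mathcal{L}$, the transport $v\cdot\nabla_x f$, the linear coupling $u\cdot v\sqrt M$ and $b^f$, the pressure term $P'(1)\nabla\rho$, the viscosity $\mu\Delta u$, and the damping $-u$), and $\mathcal{N}(U)$ collects all quadratic and cubic terms --- e.g. $u\cdot\nabla u$, $\rho\,\mathrm{div}\,u$, $u\cdot\nabla_x\rho$, $\rho\,\mathcal Lf$, $u\cdot\nabla_v f$, $a^f u$, the variable‑coefficient remainders $\big(\tfrac{P'(1+\rho)}{1+\rho}-P'(1)\big)\nabla\rho$, $\tfrac{\mu\rho\Delta u}{1+\rho}$, and the genuinely trilinear pieces $\rho\,u\cdot\nabla_v f$, $\rho\,(u\cdot v)f$ produced by the density‑dependent friction. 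Taking the Fourier transform in $x$ and using, for each frequency $\xi$, the macro--micro splitting $\hat f=\mathbf{P}_0\hat f+(\mathbf I-\mathbf P_0)\hat f$ onto $\mathrm{span}\{\sqrt M,v\sqrt M\}$, I will build an interactive Lyapunov functional
\begin{align*}
\mathcal{E}_\xi(\hat U)\ \sim\ \|\hat f\|_{L^2_v}^2+|\hat\rho|^2+|\hat u|^2
+\kappa\,\mathrm{Re}\!\left(\frac{\mathrm i\,\xi}{1+|\xi|^2}\cdot\hat u\,\overline{\hat\rho}\right)
+\ \text{hypocoercive correctors for }\hat f,
\end{align*}
with $\kappa>0$ small; combining the dissipation of the microscopic part coming from $\mathcal{L}$, the damping $-\hat u$, the hypocoercive terms, and the $\rho$--$u$ cross term yields
\begin{align*}
\frac{\dd}{\dd t}\mathcal{E}_\xi(\hat U(t))+c\,\frac{|\xi|^2}{1+|\xi|^2}\,\mathcal{E}_\xi(\hat U(t))\le 0,
\qquad\text{hence}\qquad
\mathcal{E}_\xi(\hat U(t))\lesssim e^{-c|\xi|^2 t/(1+|\xi|^2)}\,\mathcal{E}_\xi(\hat U_0).
\end{align*}
Integrating in $\xi$ and splitting into $|\xi|\le1$ and $|\xi|\ge1$, the low frequencies are controlled by $\|U_0\|_{\mathcal Z_1}$ (an $L^1$‑in‑$x$ type norm, since $|\hat U_0(\xi)|\lesssim\|U_0\|_{\mathcal Z_1}$) and the high frequencies decay exponentially and are absorbed by the $H^2$‑norm of the data, giving
\begin{align*}
\big\|\nabla^k\big(e^{t\mathbb{A}}U_0\big)\big\|_{L^2_v(L^2)}
\ \lesssim\ (1+t)^{-\frac34-\frac k2}\,\|U_0\|_{\mathcal Z_1}
+e^{-ct}\big\|\nabla^k U_0\big\|_{L^2_v(L^2)},\qquad k=0,1,2.
\end{align*}

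\textbf{Nonlinear closure.} Introduce
\begin{align*}
\mathcal{M}(t):=\sup_{0\le s\le t}\Big\{(1+s)^{\frac34}\big(\|f\|_{L^2_v(H^2)}+\|(\rho,u)\|_{H^2}\big)
+(1+s)^{\frac54}\big(\|\nabla f\|_{L^2_v(H^1)}+\|\nabla(\rho,u)\|_{H^1}\big)\Big\}.
\end{align*}
For the $L^2$‑ and $\dot H^1$‑parts I will use Duhamel's formula $U(t)=e^{t\mathbb A}U_0+\int_0^t e^{(t-s)\mathbb A}\mathcal N(U)(s)\,\dd s$, split $\int_0^t=\int_0^{t/2}+\int_{t/2}^t$, and bound $\|\mathcal N(U)(s)\|_{\mathcal Z_1}+\|\mathcal N(U)(s)\|_{L^2_v(L^2)}$ by the uniform $H^2$‑bound of Theorem~\ref{T1.1} times a power of $(1+s)$ supplied by $\mathcal M(s)$; the trilinear terms carry, in addition, the uniformly small factor $\|\rho\|_{L^\infty}\lesssim\|\rho\|_{H^2}$. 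Performing the elementary time integrals then closes these bounds with rates $(1+t)^{-3/4}$ and $(1+t)^{-5/4}$. For the top order $\nabla^2$, where the semigroup provides no smoothing and the Duhamel term would formally require $H^3$‑control of $\mathcal N$, I will instead combine the energy inequality $\frac{\dd}{\dd t}\mathcal E_2+c\mathcal D_2\le(\text{nonlinear})$ from the proof of Theorem~\ref{T1.1} --- in which effective‑flux cross terms recover the dissipation of $\rho$ and of the macroscopic moments of $f$ --- with the already‑established decay of $\|\nabla(\rho,u)\|_{H^1}$, $\|\nabla f\|_{L^2_v(H^1)}$ and the low‑frequency bound above, through a feedback/Gronwall argument, to obtain \eqref{G1.10}. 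A standard continuity argument then gives $\mathcal M(t)\le C\big(\|U_0\|_{\mathcal Z_1}+\|f_0\|_{H^2_{x,v}}+\|(\rho_0,u_0)\|_{H^2}\big)+C\mathcal M(t)^2$, so the smallness of the initial $H^2$‑norm forces $\mathcal M$ to stay bounded, establishing \eqref{G1.8}--\eqref{G1.10}.

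\textbf{Main obstacle.} The crux is twofold. First, the kinetic equation \eqref{I-3} dissipates only in $v$, so the linear decay must be extracted by hypocoercivity together with the macro--micro decomposition, and the macroscopic moments $a^f,b^f$ are coupled to $(\rho,u)$; moreover the system carries \emph{two} undamped conserved modes, $\rho$ and $a^f$, which is precisely why the rate is only algebraic and must be fed by the $\mathcal Z_1$‑bound through the frequency decomposition rather than obtained by pure energy decay. Second, the density‑dependent friction produces trilinear terms with no gain of regularity, which prevents running Duhamel at the top order and forces the mixed mild/energy scheme; arranging the bookkeeping so that every nonlinear contribution carries at least the integrable rate $(1+s)^{-5/4}$ against the slowest linear kernel $(1+t-s)^{-3/4}$ is the delicate point that makes the bootstrap close.
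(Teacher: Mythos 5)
Your overall architecture (Fourier/hypocoercive Lyapunov functional giving $e^{-c|k|^2t/(1+|k|^2)}$ decay, frequency splitting, Duhamel plus a time-weighted bootstrap, interpolation for the $L^p$ rates) is the same as the paper's, and the interpolation step and the linear semigroup estimate are fine. But there are two concrete gaps at the nonlinear stage. First, your claim that $\|\mathcal N(U)(s)\|_{\mathcal Z_1}+\|\mathcal N(U)(s)\|_{Z_2}$ can be bounded by the uniform $H^2$ bound of Theorem \ref{T1.1} times decaying factors supplied by $\mathcal M(s)$ fails for the trilinear friction term $\rho\,\mathcal L\{\mathbf I-\mathbf P\}f$: $\mathcal L f$ contains $\Delta_v f$ and $|v|^2 f$, and $\mathcal M$ (indeed the whole decay framework) contains no velocity derivatives or velocity weights of $f$, while $|v|^2f$ is not even controlled in $L^2$ by $\|f\|_{H^2_{x,v}}$. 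The paper treats this contribution ($K_5$ in \eqref{G4.27}) by exploiting the microscopic structure $\mathbf P h=0$ through the weighted source estimates \eqref{G4.6} and \eqref{G5.24}, in which only $\nu^{-1/2}h$ enters, bounding the integrand by $\mathcal E(s)\mathcal D(s)$ and then invoking the time-integrability $\int_0^\infty\mathcal D(s)\,{\rm d}s\le\mathcal E(0)$ from \eqref{G3.47} instead of any pointwise decay (see \eqref{G4.32} and \eqref{G5.34}); this device is also the reason no rate better than $(1+t)^{-5/4}$ is obtained at second order. Without it, your bootstrap does not close on $K_5$.

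Second, your closure $\mathcal M(t)\le C\big(\|U_0\|_{\mathcal Z_1}+\|U_0\|_{\mathcal H^2}\big)+C\mathcal M(t)^2$ can be absorbed only if the whole affine constant, including $\|U_0\|_{\mathcal Z_1}$, is small (this is exactly the hypothesis of Lemma \ref{LA.4}); smallness of the $H^2$ data alone does not help, and Theorem \ref{T1.2} assumes only boundedness of $\|U_0\|_{\mathcal Z_1}$. The paper gets around this with a two-stage bootstrap: a first, suboptimal stage with rates $(1+t)^{-3/8-n/2}$ closed by estimating the linear evolution in $\mathcal Z_{4/3}$ and interpolating $\|U_0\|_{\mathcal Z_{4/3}}\le\|U_0\|_{\mathcal Z_1}^{1/2}\|U_0\|_{\mathcal Z_2}^{1/2}$ so that the small $\mathcal Z_2$ norm supplies the needed smallness (see \eqref{G5.30}), and then a second stage in which the preliminary decay makes every nonlinear contribution linear in $\mathcal E_\infty(t)$ with a small prefactor, so no smallness of $\|U_0\|_{\mathcal Z_1}$ is required. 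Relatedly, at the top order the energy inequality from the proof of Theorem \ref{T1.1} is not sufficient for your ``feedback/Gronwall'' step, since its dissipation does not dominate the zero-order part of the energy; one needs the dedicated second-order estimate together with the high-frequency macroscopic dissipation estimates (Lemmas \ref{L5.1}--\ref{L5.2} and the subsequent lemma, yielding \eqref{G5.20}), so that the only undissipated forcing is the low-frequency quantity $\|\nabla^2(a^L,b^L,\rho^L,u^L)\|^2$, whose decay is then fed in via the low-frequency Duhamel estimate of Theorem \ref{T5.4}.
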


\medskip 

In the proof of Theorem \ref{T1.1}, the local existence part   is  standard in some sense and can be established 
by applying  linearization method and Banach's fixed point theorem, while   the argument for global 
existence part relies on establishing newly uniform-in-time  a priori energy estimates.
Compared to the approach developed in \cite{LMW-SIAM-2017} where 
the $H^4$ regularity assumption of smooth solutions simplifies 
the energy estimate process, our constraints on $H^2$ regularity of solutions bring more challenges and 
require us develop new ideas to   enclose the energy estimates, see Section 3 for more details. 

For the proof of Theorem \ref{T1.2},  the nonlinear term $\rho\mathcal{L}f$ in the  equation \eqref{I-3} brings about significant difficulties in estimating the decay rates of the solutions.
For the friction force \eqref{fo-aaaa} considered in \cite{CDM-krm-2011,Ww-CMS-2024},  the nonlinear terms  {in the perturbation of the particle equation } $f$ do not include the term $v^2f$ and  their structures  are relatively simpler than our case \eqref{fo-aa}. Consequently, when conducting energy estimates,   the a priori estimates can be
enclosed without performing energy estimates on the mixed space-velocity
derivatives of solutions, see \cite{CDM-krm-2011,Ww-CMS-2024} for more details.  In contrast, our friction force  \eqref{fo-aa} involves the density $\rho$, and the equation \eqref{I-3} includes the nonlinear terms  {$\rho v^2f$ and  $\rho\mathcal{L}f$, necessitating the energy estimates incorporating the mixed space-velocity derivatives of solutions to manage our nonlinear term $K_5$ (see \eqref{G4.27}). 
 By decomposing $ \rho\mathcal{L} f $  into $\rho\mathcal{L}\{\mathbf{I}-\mathbf{P}\}f$
and $\rho\mathcal{L}\mathbf{P}f$ in Section 4,  
and leveraging the results of Theorem \ref{T4.1}, 
we successfully derive the decay rates of strong solutions
to \eqref{I-3}--\eqref{I-6} in $L^2$-norm under the assumption that the $L^1$-norm of the initial data
is {sufficiently} small.  Since the nonlinear term contains the second derivative terms such as $\Delta u$ and $\Delta_v f$, it is difficult to  get decay rates of the higher derivative
of $(f,\rho, u)$ by applying  the classical  energy method.
To overcome these difficulties, in Section 5, we adopt a more refined low-high frequency decomposition method and an improved energy method,  thereby relaxing  the requirement for the initial data of $L^1$-norm, i.e. only  boundedness 
of $L^1$-norm is needed. 
A crucial step in the subsequent proof of Theorem \ref{T1.2} is
to establish the following inequality (see \eqref{NJKKK5.24}):
\begin{align}\label{NJK}
\frac{\mathrm{d}}{\mathrm{d}t}\mathcal{E}_1(t)+\lambda_{16}\mathcal{E}_1(t)\leq C
\|\nabla^2(a^L,b^L,\rho^L,u^L)\|^2,
\end{align}
where the functional satisfies
\begin{align*}
\mathcal{E}_1(t)\backsim \|\nabla^2 f\|_{L_v^2(L^2)}^2+\|\nabla^2(\rho,u)\|^2.    
\end{align*}
Here we develop a refined low-frequency 
estimate in Theorem \ref{T5.4}, based on the Duhamel principle, 
which exhibits  more information than that in   Theorem \ref{T4.1}. 
By combining Theorem \ref{T5.4} with the inequality \eqref{NJK}, 
we derive the optimal time decay rates of strong
solutions and their gradients. In addition, by utilizing the interpolation inequality, 
we also determine the optimal decay rate $(1+t)^{- {3}/{2}(1- {1}/{p})}$ of solutions
in $L^p$-norm   for $2 \leq p \leq 6 $, and a decay rate  $(1+t)^{-5/4}$ in $L^p$-norm   for $6 \leq p \leq \infty$.
In stark contrast to the density-independent friction force \eqref{fo-aaaa} studied in 
\cite{Ww-CMS-2024,Li-Ni-Wu}, for our  density-dependent friction force case \eqref{fo-aa}, due to the fact that we cannot  fully transform 
the term $\rho\mathcal{L}f$ into an absolute energy term $\mathcal{E}(t)$, 
and consequently derive the corresponding dissipation $\mathcal{D}(t)$,
we are unable to establish the optimal time-decay rates
for the  second-order  spatial derivatives of strong solutions. 

The rest of this paper is structured as follows.
In Section 2, we present some preliminary notations 
and properties that will be used  throughout the paper.
In Section 3, we establish a priori estimates 
for the problem \eqref{I-3}--\eqref{I-6} and prove 
the global existence of strong solutions for the Cauchy 
problem. Time-decay estimates of the solutions are derived
in Section 4 by the classical energy method and in Section 5 by using low-frequency and
high-frequency decomposition techniques along with Fourier analysis.
In Section 6, we demonstrate that when particles lie
in a torus $\mathbb{T}^3$, the time-decay rate of convergence 
of strong solutions is exponential. 
Finally, in the Appendix, we present several useful lemmas.

\section{Preliminaries}
In this section, we   introduce   notations and list
main properties of 
the Fokker-Planck operator $\mathcal{L}$, defined in \eqref{A1.7}, which will 
be used frequently in the subsequent discussions.

\subsection{Notations}

The letter $C$ denotes a generic  positive constant which may change from line to line. The symbol $A\backsim B$ denotes the relation $\frac{A}{C}\leq B\leq CA$ for some general constant $C$.
 We set $\|(g,h)\|_{X}:=\|g\|_{X}+\|h\|_{X}$ for the Banach
space $X$, where the functions $g $ and $h $ belong to $X$.

We denote the weight function $\nu(v):=1+|v|^2$ and   the norm $|\cdot|_{\nu}$ as
\begin{align*}
|g|_{\nu}^{2}:&=\int_{\mathbb{R}^3}(|\nabla_vg(v)|^2+\nu(v)|g(v)|^2){\rm d}v, \quad
g=g(v).
\end{align*}
For the corresponding norm concerning with the dissipation of 
the Fokker-Planck operator, we denote by
\begin{align*}
\|g\|_{\nu}^{2}:&=\int_{\mathbb{R}^3\times\mathbb{R}^3} (|\nabla_{v}g(x,v)|^2
+\nu(v)|g(x,v)|^2){\rm d}x{\rm d}v.    
\end{align*}
For simplicity of notation, we use $\|\cdot\|$ to denote  the norm $\|\cdot\|_{L_x^2}$ or $\|\cdot\|_{L^2_{x,v}}$, and $\langle\cdot,\cdot\rangle$   the 
inner product over space $L_v^2$, i.e.,
\begin{align*}
\langle g,h\rangle=\int_{\mathbb{R}^3}g(v)h(v)\mathrm{d}v,\quad g,h\in L_v^2.   
\end{align*}
For $q\geq 1$, the standard spatial-velocity mixed Lebesgue space
$Z_q=L_v^2(L_x^q)=L^2(\mathbb{R}_v^3;L^q(\mathbb{R}^3_x))$ is defined by
\begin{align*}
\|g\|_{Z_q}^2:=\int_{\mathbb{R}^3}\Big(\int_{\mathbb{R}^3}|g(x,v)|^q\mathrm{d}x\Big)^{\frac{2}{q}}\mathrm{d}v.   
\end{align*}
For any multi-indices $\alpha=(\alpha_{1},\alpha_{2},\alpha_{3})$,
 $\beta=(\beta_{1},\beta_{2},\beta_{3})$, we always denote $\partial_{x}^{\alpha}\partial_{v}^{\beta}
=\partial_{x_{1}}^{\alpha_{1}}\partial_{x_{2}}^{\alpha_{2}}
\partial_{x_{3}}^{\alpha_{3}}\partial^{\beta_{1}}_{v_{1}}\partial^{\beta_{2}}_{v_{2}}\partial^{\beta_{3}}_{v_{3}}$,
the partial derivatives with regard to $x=\left(x_1, x_2, x_3\right)$ and $v=\left(v_1, v_2, v_3\right)$. The lengths of $\alpha$ and $\beta$ are defined as $|\alpha|=\alpha_1+\alpha_2+\alpha_3$ and $|\beta|=\beta_1+\beta_2+\beta_3$. Define
\begin{align*}
\|g\|_{H^s}=\sum_{|\alpha| \leq s}\|\partial^\alpha g\|, \quad\|g\|_{H_{x, v}^s}=\sum_{|\alpha|+|\beta| \leq s}\|\partial_\beta^\alpha g\| .
\end{align*}

For an integrable function $g:\mathbb{R}^{3}\rightarrow\mathbb{R}$, its Fourier transform
 is given by
\begin{equation*}
\widehat{g}(k)=\mathcal{F}g(k)=\int_{\mathbb{R}^{3}}e^{-ix\cdot k}g(x){\rm d}x, 
\quad x\cdot k=\sum\limits_{j=1}^{3}x_{j}k_{j}, 
\end{equation*}
for $k\in\mathbb{R}^{3}$.
Here, $i=\sqrt{-1}\in\mathbb{C}$ is the imaginary unit. 
For two complex functions $f$ and $g$,  
 $(g|h):=g\cdot\overline{h}$ 
represents the dot product of $g$ with the complex conjugate of $h$.

Finally, we denote norms $\|\cdot\|_{\mathcal{Z}_{q}}$, $\|\cdot\|_{\mathcal{H}^{m}}$ with the integer $m\geq 0$ and $q\geq 1$  by
\begin{align*}
\|(f,\rho, u)\|_{\mathcal{Z}_{q}}&=\|f\|_{Z_{q}}+\|\rho\|_{L^{q}}+\|u\|_{L^{q}}, \\
\|(f,\rho, u)\|_{\mathcal{H}^{m}}&=\|f\|_{L^{2}_{v}(H^m)}+\|\rho\|_{H^{m}}+\|u\|_{H^{m}}.
\end{align*}

\subsection{Fokker-Planck operator}
Motivated by \cite{DFT-2010-CMP,GY-iumj-2004}, we introduce the  macro-micro decomposition to $f(t,x,v)$, which plays a crucial role in the  following estimates.
%
Below we first introduce some properties of the velocity
 orthogonal projection $\mathbf{P}$:
\begin{equation*}
\mathbf{P}:L_{v}^{2}(\mathbb{R}^3)\rightarrow \mathrm{Span}\{\sqrt{M},v_{j}\sqrt{M}\},
\end{equation*}
for $j=1,2,3$,
and
\begin{align*}
\mathbf{P}:=\mathbf{P}_{0}\oplus \mathbf{P}_{1},
\end{align*}
where $\mathbf{P}_{0}f
:=a\sqrt M$ and $\mathbf{P}_{1}f:=b\cdot v\sqrt M$.

Moreover, $f(t,x,v)$ can be decomposed by 
\begin{align}\label{G2.1}
f(t,x,v)=\mathbf{P}f+\{\mathbf{I}-\mathbf{P}\}f.
\end{align}
Here,  $\mathbf{P}f$, $\{\mathbf{I}-\mathbf{P}\}f$ denote the macro part and the micro part respectively. 
Meanwhile, $\mathcal{L}f$ has the following decomposition
\begin{align}\label{G2.2}
\mathcal{L}f=\mathcal{L}\{\mathbf{I}-\mathbf{P}\}f+\mathcal{L}\mathbf{P}f
=\mathcal{L}\{\mathbf{I}-\mathbf{P}\}f-\mathbf{P}_{1}f.
\end{align}
Noticing that $\mathcal{L}$ is self-adjoint,   there exists a positive constant $\lambda_0>0$ such that
(cf.  \cite{LMW-SIAM-2017,DFT-2010-CMP,CDM-krm-2011}) 
\begin{align}\label{G2.3}
-\langle \mathcal{L}f, f\rangle
&\geq\lambda_{0}|\{\mathbf{I}-\mathbf{P}_0\}f|_{\nu}^{2},\nonumber\\
-\langle \mathcal{L}\{\mathbf{I}-\mathbf{P}\}f,f\rangle
&\geq\lambda_{0}|\{\mathbf{I}-\mathbf{P}\}f|_{\nu}^{2}, \nonumber\\
-\langle \mathcal{L}f, f\rangle
&\geq\lambda_{0}|\{\mathbf{I}-\mathbf{P}\}f|_{\nu}^{2}+|b|^{2}.
\end{align}

\section{Global existence of strong solutions} 

In this section, we shall establish the global existence of strong solutions to the Cauchy problem 
\eqref{I-3}--\eqref{I-6}. 
We first  establish the uniform a priori estimates of $f$, $\rho$ and $u$.

\subsection{A priori estimates}
For this, we shall suppose that \eqref{I-3}--\eqref{I-6} admits
a strong solution $(f,\rho,u)$ to the Cauchy problem 
 on $0\leq t\leq T$ with some $T>0$. And  the solution  $(f,\rho, u)$ satisfies 
\begin{align}\label{G3.1}
\sup_{0\leq t\leq T}\|(f,\rho,u)\|_{\mathcal{H}^{2}}\leq\delta,
\end{align}
where $0<\delta<1$ is a sufficiently small generic constant. 
\begin{lem}\label{L3.1}
For the strong solution $(f,\rho, u)$ to the Cauchy problem \eqref{I-3}--\eqref{I-6}, there exists a positive constant $\lambda_1$ such that
{
\begin{align}\label{G3.2}
&\frac{1}{2}\frac{\rm d}{{\rm d}t}\big(\|f\|^{2}+P^{\prime}(1)\|\rho\|^{2}+\|u\|^{2}\big)+\lambda_1(\|b-u\|^{2}
+\|\nabla u\|^2+\|\{\mathbf{I}-\mathbf{P}\}f\|_{\nu}^{2})
\leq C\delta\|\nabla(a,b,\rho)\|^2,
\end{align}}
for any $0 \leq t \leq T$. 
\end{lem}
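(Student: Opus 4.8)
The plan is to derive \eqref{G3.2} as the lowest-order ($|\alpha|=0$) energy identity for the perturbed system \eqref{I-3}--\eqref{I-5}, obtained by testing each equation against its natural multiplier and summing. Concretely, I would multiply \eqref{I-3} by $f$ and integrate over $\mathbb{R}^3_x\times\mathbb{R}^3_v$, multiply \eqref{I-4} by $P'(1)\rho$ and integrate over $\mathbb{R}^3_x$, and multiply \eqref{I-5} by $u$ (after multiplying through by $1+\rho$ or, equivalently, keeping the $(1+\rho)^{-1}$ coefficients and tracking them) and integrate over $\mathbb{R}^3_x$. The time-derivative terms assemble into $\tfrac12\tfrac{\mathrm d}{\mathrm dt}\big(\|f\|^2+P'(1)\|\rho\|^2+\|u\|^2\big)$ up to controllable errors coming from the $\rho$-dependent coefficients; the key is that the singular/variable coefficients $\frac{P'(1+\rho)}{1+\rho}$ and $\frac{\mu}{1+\rho}$ differ from their equilibrium values $P'(1)$ and $\mu$ by terms of size $O(|\rho|)$, which under \eqref{G3.1} are $O(\delta)$ in $L^\infty$ by Sobolev embedding $H^2(\mathbb{R}^3)\hookrightarrow L^\infty$.

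The heart of the argument is the cancellation of the linear coupling terms. The terms $-u\cdot v\sqrt M$ in \eqref{I-3} and the $b$ in \eqref{I-5} interact: pairing $-u\cdot v\sqrt M$ against $f$ produces $-\langle v\sqrt M, f\rangle\cdot u = -b\cdot u$ integrated in $x$, while pairing $b$ against $u$ gives $+\int b\cdot u$; similarly the damping $-u-au$ in \eqref{I-5} paired with $u$ gives $-\|u\|^2-\int a|u|^2$, and I expect the combination with the friction terms in \eqref{I-3} to reorganize into the dissipation $\|b-u\|^2$. This is exactly the standard structure: $\int(b-u)\cdot u$ versus $-\langle u\cdot v\sqrt M,f\rangle = -u\cdot b$ plus $\langle \mathcal Lf,f\rangle$ producing, via \eqref{G2.3}, the term $\lambda_0(\|\{\mathbf I-\mathbf P\}f\|_\nu^2+\|b\|^2)$; completing the square in $b$ and $u$ then yields the $\|b-u\|^2$ dissipation. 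The $-\tfrac12 u\cdot vf$ term in \eqref{I-3} pairs to $-\tfrac12\int u\cdot\langle vf,f\rangle$ which is cubic, hence $O(\delta)$ times lower-order norms, and can be absorbed. The viscosity term $-\frac{\mu\Delta u}{1+\rho}$ tested against $u$ gives $\mu\int\frac{|\nabla u|^2}{1+\rho} + \mu\int\frac{\nabla\rho\cdot\nabla u}{(1+\rho)^2}u$, the first of which dominates $\lambda_1\|\nabla u\|^2$ for $\delta$ small and the second is cubic, $O(\delta)\|\nabla(\rho,u)\|\|\nabla u\|\cdot\|u\|_{L^6}$-type, again absorbable. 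The pressure term $\frac{P'(1+\rho)}{1+\rho}\nabla\rho$ against $u$ pairs with $(\rho+1)\mathrm{div}\,u$ from \eqref{I-4} against $P'(1)\rho$ to cancel at leading order (integration by parts on $\int P'(1)\rho\,\mathrm{div}\,u$ versus $\int P'(1)\nabla\rho\cdot u$), leaving a variable-coefficient remainder $\int\big(\tfrac{P'(1+\rho)}{1+\rho}-P'(1)\big)\nabla\rho\cdot u$ that is $O(\delta)\|\nabla\rho\|\|u\|$.

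I would then collect all error terms. After the cancellations, the surviving right-hand side consists of: (i) cubic transport terms such as $\int u\cdot\nabla_v f\, f$ (which by integration by parts in $v$ and the fact that $u$ is $v$-independent is actually zero or reduces to lower-order), $\int u\cdot\nabla\rho\,\rho$ and $\int\rho\,\mathrm{div}\,u\,\rho$ from \eqref{I-4}, $\int u\cdot\nabla u\cdot u$ from \eqref{I-5}; (ii) the $\rho$-weighted terms from the right-hand side of \eqref{I-3}, namely $\int\rho(\mathcal Lf-u\cdot\nabla_v f+\tfrac12 u\cdot vf+u\cdot v\sqrt M)f$; (iii) the variable-coefficient remainders identified above. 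All of these are estimated by Hölder, Sobolev embedding $H^1\hookrightarrow L^6$, $H^2\hookrightarrow L^\infty$, and the a priori bound \eqref{G3.1}, producing a bound of the form $C\delta\big(\|\nabla(a,b,\rho)\|^2+\|\nabla u\|^2+\|\{\mathbf I-\mathbf P\}f\|_\nu^2+\text{lower order}\big)$; the pieces proportional to the dissipation are absorbed into the left for $\delta$ small, yielding precisely the claimed $C\delta\|\nabla(a,b,\rho)\|^2$ on the right. The term $\int\rho\,\mathcal Lf\,f$ in (ii) deserves care — this is the genuinely new difficulty flagged in the introduction — but at this lowest order one writes $\int\rho\,\mathcal Lf\,f = -\int\rho\,|\{\mathbf I-\mathbf P_0\}f|_\nu^2 + \text{(sign-indefinite } v\text{-integration-by-parts terms)}$ and bounds $|\rho|_{L^\infty}\le C\delta$, so it is controlled by $C\delta\|\{\mathbf I-\mathbf P\}f\|_\nu^2$ plus controllable pieces; I expect this estimate, together with correctly tracking that the $\|f\|_\nu^2$-type quantities reduce (via \eqref{G2.1}, \eqref{G2.3}) to $\|\{\mathbf I-\mathbf P\}f\|_\nu^2$ and $\|b\|^2$ with $\|b\|\le\|b-u\|+\|u\|$, to be the main bookkeeping obstacle rather than a conceptual one. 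The $\|\nabla(a,b,\rho)\|^2$ on the right (rather than just $\|\nabla\rho\|^2$) is the signature that some cubic terms involve $\nabla a$ and $\nabla b$ coming from $\int\rho\, u\cdot v\sqrt M\, f$-type expressions after moving derivatives around; these will be absorbed at the next order of the energy hierarchy.
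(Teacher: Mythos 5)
Your multipliers and overall architecture are the same as the paper's: test \eqref{I-3}, \eqref{I-4}, \eqref{I-5} with $f$, $P'(1)\rho$, $u$, use the exact macroscopic identity $-\langle\mathcal{L}f,f\rangle=-\langle\mathcal{L}\{\mathbf{I}-\mathbf{P}\}f,\{\mathbf{I}-\mathbf{P}\}f\rangle+|b|^2$ together with the drag term $\int(b-u)\cdot u$ to produce $\|b-u\|^2$, keep $\mu\int|\nabla u|^2/(1+\rho)$ as the viscous dissipation, and estimate the cubic/variable-coefficient remainders by H\"older, $H^1\hookrightarrow L^6$, $H^2\hookrightarrow L^\infty$ and \eqref{G3.1}. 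The gap is in your treatment of the $\rho$-weighted terms from the right-hand side of \eqref{I-3}, specifically the macroscopic part of $\int\rho\langle\mathcal{L}f,f\rangle$. Writing $\mathcal{L}f=\mathcal{L}\{\mathbf{I}-\mathbf{P}\}f-\mathbf{P}_1f$, the microscopic piece is indeed controlled by $C\|\rho\|_{L^\infty}\|\{\mathbf{I}-\mathbf{P}\}f\|_\nu^2\le C\delta\|\{\mathbf{I}-\mathbf{P}\}f\|_\nu^2$ (the paper instead keeps it on the left as the weight $(1+\rho)\ge\tfrac12$); but the macroscopic piece equals $-\int\rho|b|^2\,\mathrm{d}x$, and the mechanisms you propose do not close it. Bounding it by $C\delta\|b\|^2$ and then using $\|b\|\le\|b-u\|+\|u\|$ leaves $C\delta\|u\|^2$, which is neither part of the dissipation $\|b-u\|^2+\|\nabla u\|^2+\|\{\mathbf{I}-\mathbf{P}\}f\|_\nu^2$ nor of the admissible right-hand side $C\delta\|\nabla(a,b,\rho)\|^2$ (there is no Poincar\'e inequality on $\mathbb{R}^3$, and the global scheme \eqref{G3.45}--\eqref{G3.47} never dissipates $\|u\|^2$ or $\|b\|^2$); any H\"older variant such as $\|\rho\|_{L^3}\|b\|\,\|b\|_{L^6}\le C\delta^2\|\nabla b\|$ instead leaves an $O(\delta^3)$ constant, which ruins the clean form of \eqref{G3.2} and hence the time-integrated bound \eqref{G3.47} (one would get $\mathcal{E}(t)\lesssim\mathcal{E}(0)+\delta^3t$). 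The same obstruction hits the companion term $\int\rho\,u\cdot b$ coming from $\rho\,u\cdot v\sqrt{M}$ tested against $f$, which you only gesture at with ``moving derivatives around.''

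The missing idea is the algebraic pairing the paper exploits in $I_7$: since $\rho\,\mathcal{L}\mathbf{P}f+\rho\,u\cdot v\sqrt{M}=\rho\,(u-b)\cdot v\sqrt{M}$, testing against $f$ produces exactly $\int\rho\,(u-b)\cdot b\,\mathrm{d}x$, in which one factor is the dissipated quantity $u-b$; then
\begin{align*}
\int_{\mathbb{R}^3}\rho\,(u-b)\cdot b\,\mathrm{d}x\le \|\rho\|_{L^3}\|u-b\|\,\|b\|_{L^6}\le C\delta\big(\|u-b\|^2+\|\nabla b\|^2\big),
\end{align*}
the first piece being absorbed on the left and the second being exactly of the allowed form $C\delta\|\nabla(a,b,\rho)\|^2$. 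Without this cancellation the lowest-order estimate cannot be closed as stated, so step (ii) of your plan must be restructured around it; the rest of your outline (the pressure cancellation leaving the $I_5$-type remainder, the viscous commutator $I_6$-type term, the cubic transport terms $I_1$, $I_2$, $I_4$, and the quartic terms handled via Sobolev embedding) matches the paper's proof.
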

\begin{proof}
Multiplying \eqref{I-3}--\eqref{I-5} by $f$, $P^{\prime}(1)\rho$ and $u$ respectively and then taking integration and summing   up, we arrive at 
\begin{align}\label{G3.3}
&\frac{1}{2}\frac{\rm d}{{\rm d}t}\big(\|f\|^{2}+P^{\prime}(1)\|\rho\|^2+\|u\|^{2}\big)+\|b-u\|^{2}
+\int_{\mathbb{R}^3}\frac{\mu|\nabla u|^2}{1+\rho}  \mathrm{d}x\nonumber\\
&+\int_{\mathbb{R}^3}(\rho+1)\langle-\mathcal{L}\{\mathbf{I}-\mathbf{P}\}f,f\rangle {\rm d}x \nonumber\\
=\,&-\int_{\mathbb{R}^3}(u\cdot\nabla u)\cdot u{\rm d}x+\int_{\mathbb{R}^3} u\cdot
\Big\langle\frac{1}{2}vf, f\Big\rangle {\rm d}x-\int_{\mathbb{R}^3}a|u|^2{\rm d}x-\frac{P^{\prime}(1)}{2}\int_{\mathbb{R}^3}\rho^2{\rm div}u\mathrm{d}x\nonumber\\
&\,-\int_{\mathbb{R}^3}\Big(\frac{P^{\prime}(\rho+1) }{\rho+1}-P^{\prime}(1)  \Big)\nabla\rho\cdot u \mathrm{d}x-\mu\int_{\mathbb{R}^3}\nabla\Big(\frac{1}{1+\rho}\Big)\cdot\nabla u\cdot u   \mathrm{d}x\nonumber\\
&\,+\int_{\mathbb{R}^3}\rho\langle \mathcal{L}\mathbf{P}f-u\cdot\nabla_v f+\frac{1}{2}u\cdot vf+u\cdot v\sqrt{M},f\rangle{\rm d}x\nonumber\\
\equiv:& \sum_{j=1}^7 I_j.
 \end{align}
For the term $I_1$, using \eqref{G3.1}, Lemma \ref{LA.2}, and  H\"{o}lder’s inequality, 
 we have
\begin{align}\label{G3.4}
I_1\leq C\|u\|_{L^3}\|u\|_{L^6} \|\nabla u\|\leq C\|u\|_{H^1}\|\nabla u\|^2
\leq C\delta \|\nabla u\|^2.
\end{align}
For the terms $I_2$ and $I_3$, it follows from \eqref{G2.1} that
\begin{align}\label{G3.5}
I_2+I_3=\,&\frac{1}{2}\int_{\mathbb{R}^3}u\cdot\langle v\{\mathbf{I}-\mathbf{P}\}f,\{\mathbf{I}-\mathbf{P}\}f\rangle\mathrm{d}x+\int_{\mathbb{R}^3}u\cdot\langle v\mathbf{P}f,\{\mathbf{I}-\mathbf{P}\}f\rangle\mathrm{d}x\nonumber\\
&+\frac{1}{2}\int_{\mathbb{R}^3}u\cdot\langle v\mathbf{P}f,\mathbf{P}f\rangle\mathrm{d}x-\int_{\mathbb{R}^3}a|u|^2\mathrm{d}x\nonumber\\
=\,&\int_{\mathbb{R}^3}au\cdot(b-u)\mathrm{d}x+\frac{1}{2}\int_{\mathbb{R}^3}u\cdot\langle v\{\mathbf{I}-\mathbf{P}\}f,\{\mathbf{I}-\mathbf{P}\}f\rangle\mathrm{d}x+\int_{\mathbb{R}^3}u\cdot\langle v\mathbf{P}f,\{\mathbf{I}-\mathbf{P}\}f\rangle\mathrm{d}x\nonumber\\
\leq\,&C\|b-u\|\|u\|_{L^3}\|a\|_{L^6}+C\|(a,b)\|_{L^6}\|\{\mathbf{I}-\mathbf{P}\}f\|_{\nu}\|u\|_{L^3}+C\|u\|_{L^{\infty}}\|\{\mathbf{I}-\mathbf{P}\}f\|_{\nu}^2\nonumber\\
\leq\,&C\delta\big(\|b-u\|^2+\|\nabla(a,b)\|^2+ \|\{\mathbf{I}-\mathbf{P}\}f\|_{\nu}^2        \big),
\end{align}
where we have used  H\"{o}lder’s and Young's inequalities, \eqref{G3.1} and Lemmas \ref{LA.1}--\ref{LA.2}.
Notice that the fact that $H^2\hookrightarrow L^{\infty}$ and the assumption \eqref{G3.1}, 
we have 
\begin{align}\label{rho-1}
 \frac{1}{2}\leq\rho+1\leq\frac{3}{2}.  
\end{align} Thus, similar to \eqref{G3.4}--\eqref{G3.5}, the terms 
$I_4$, $I_5$, and $I_6$ can be estimated as 
\begin{align}\label{G3.6}
I_4\leq\,& C \|\rho\|_{L^3}\|\rho\|_{L^6}\|\nabla u\|\leq C\|\rho\|_{H^1}\|\nabla(\rho,u)\|^2 \leq C\delta \|\nabla(\rho,u)\|^2,\\\label{G3.7}
I_5\leq\,& C \|\rho\|_{L^3}\| u\|_{L^6}\|\nabla\rho\|\leq C\|\rho\|_{H^1}\|\nabla(\rho,u)\|^2 \leq C\delta \|\nabla(\rho,u)\|^2,\\\label{G3.8}
I_6\leq\,&C\|\nabla\rho\|_{L^3}\|u\|_{L^6}\|\nabla u\|_{L^2}\leq 
C\|\rho\|_{H^2}\|\nabla u\|^2\leq C\delta \|\nabla u\|^2.
\end{align}
For the last term $I_7$, by leveraging the decomposition  \eqref{G2.2}, and applying 
H\"{o}lder’s, Sobolev's and Young's inequalities and Lemmas \ref{LA.1}--\ref{LA.2} again,
we derive that
\begin{align}\label{G3.9}
I_7=\,&\int_{\mathbb{R}^3}\rho(u-b)\cdot b\mathrm{d}x+\frac{1}{2}\int_{\mathbb{R}^3}\rho u\cdot\langle vf,f\rangle\mathrm{d}x\nonumber\\
\leq\,& C\|\rho\|_{L^3}\|u-b\|\|b\|_{L^6}+\frac{1}{2}\int_{\mathbb{R}^3}\rho u\cdot\langle v\{\mathbf{I}-\mathbf{P}\}f,\{\mathbf{I}-\mathbf{P}\}f\rangle\mathrm{d}x\nonumber\\
&+\int_{\mathbb{R}^3}\langle v\mathbf{P}f,\{\mathbf{I}-\mathbf{P}\}f\rangle\mathrm{d}x+\int_{\mathbb{R}^3}\rho u \cdot ab\mathrm{d}x\nonumber\\
\leq\,& C\|\rho\|_{H^1}\|u-b\|\|\nabla b\|+C\|\rho\|_{H^2}\|u\|_{H^2}\|\{\mathbf{I}-\mathbf{P}\}\|_{\nu}^2\nonumber\\
&+C\|\rho\|_{H^1}\|u\|_{H^1}\|(a,b)\|_{L^6}\|\{\mathbf{I}-\mathbf{P}\}\|_{\nu}+C\|\rho\|_{H^1}\|u\|_{H^1}\|\nabla(a,b)\|\nonumber\\
\leq\,&C\delta( \|u-b\|^2+\|\nabla(a,b)\|^2+\|\{\mathbf{I}-\mathbf{P}\}\|_{\nu}^2).
\end{align}
 Then, by
substituting the estimates \eqref{G3.4}--\eqref{G3.9} into \eqref{G3.3} and using
\eqref{G2.3},
we ultimately obtain  the desired estimate \eqref{G3.2}.
\end{proof}

\begin{lem}\label{L3.2}
For the strong solution $(f,\rho, u)$ to the Cauchy problem \eqref{I-3}--\eqref{I-6}, there exists a positive constant $\lambda_2$ such that
\begin{align}\label{G3.10}
&\frac{1}{2}\frac{\rm d}{{\rm d}t}\sum_{1\leq |\alpha|\leq 2}
\big(\|\partial^\alpha f\|^2+P^{\prime}(1)\|\partial^\alpha \rho\|^2+\|\partial^\alpha u\|^2\big)\nonumber\\
&\quad +\lambda_2\sum_{1\leq|\alpha|\leq 2}\big(\|\partial^\alpha(b-u)\|^2+\|\nabla\partial^\alpha u\|+\|\partial^\alpha\{\mathbf{I}-\mathbf{P}\}f\|_{\nu}^2\big) 
 \leq C\delta\|\nabla(a,b,\rho,u)\|_{H^1}^2,
\end{align}
for any $0 \leq t \leq T$. 
\end{lem}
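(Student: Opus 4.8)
\medskip
\noindent\emph{Proof plan.}
The plan is to differentiate \eqref{I-3}--\eqref{I-5} in $x$ and to repeat, at the level of first and second spatial derivatives, the computation of Lemma \ref{L3.1}. For each multi-index $\alpha$ with $1\le|\alpha|\le 2$ we apply $\partial^\alpha$ to \eqref{I-3}, \eqref{I-4}, \eqref{I-5}, take the $L^2_{x,v}$, $L^2_x$, $L^2_x$ inner products against $\partial^\alpha f$, $P'(1)\partial^\alpha\rho$, $\partial^\alpha u$ respectively, sum the three identities and then sum over $\alpha$. Since $\partial^\alpha$ commutes with $v\cdot\nabla_x$, with $\mathcal L$ and with $\Delta$, the leading (linear) terms are as clean as in Lemma \ref{L3.1}: $\int\langle v\cdot\nabla_x\partial^\alpha f,\partial^\alpha f\rangle\,{\rm d}x=0$ and $\int u\cdot\nabla_v\partial^\alpha f\cdot\partial^\alpha f\,{\rm d}x\,{\rm d}v=0$ (the latter because $u$ is independent of $v$); the term $(1+\rho)\mathcal L\{\mathbf I-\mathbf P\}f$ from \eqref{I-3} produces, via \eqref{rho-1} and \eqref{G2.3}, a dissipation $\gtrsim\|\partial^\alpha\{\mathbf I-\mathbf P\}f\|_\nu^2+\|\partial^\alpha b\|^2$; the source $b-u-au$ in \eqref{I-5} together with $-u\cdot v\sqrt M$ in \eqref{I-3} combines into $\|\partial^\alpha(b-u)\|^2$; the leading part of $\mu\Delta u/(1+\rho)$, after one integration by parts in $x$ and \eqref{rho-1}, gives the dissipation $\gtrsim\|\nabla\partial^\alpha u\|^2$; and the top-order part of the pressure coupling $P'(1)\nabla\rho$ cancels against $P'(1)\,\mathrm{div}\,u$ from \eqref{I-4} exactly as in Lemma \ref{L3.1}.

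It then remains to absorb all commutator and nonlinear remainders into $C\delta\|\nabla(a,b,\rho,u)\|_{H^1}^2$ plus pieces controlled by the dissipation above. These split into four groups: (i) the transport commutators $[\partial^\alpha,u\cdot\nabla_v]f$, $[\partial^\alpha,u\cdot\nabla]u$, $[\partial^\alpha,u\cdot\nabla]\rho$ and the quadratic term $\rho\,\mathrm{div}\,u$ from \eqref{I-4}; (ii) the velocity-weighted quadratic term $\tfrac12 u\cdot vf$ and the quadratic part of $-au$; (iii) the variable-coefficient corrections $\big(P'(1+\rho)/(1+\rho)-P'(1)\big)\nabla\rho$ and $\mu\big((1+\rho)^{-1}-1\big)\Delta u$, together with the commutator $[\partial^\alpha,(1+\rho)^{-1}]\Delta u$; and (iv) the density-coupled particle nonlinearities $\rho\big(\mathcal Lf-u\cdot\nabla_v f+\tfrac12 u\cdot vf+u\cdot v\sqrt M\big)$. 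Every such term is estimated in the spirit of \eqref{G3.4}--\eqref{G3.9}: one distributes the derivatives by Leibniz, places the highest-order factor in $L^2$, a middle factor in $L^6$ or $L^3$ (paid for by one extra spatial derivative), and the small factor ($\rho$, $u$, $a$, $b$, or $\{\mathbf I-\mathbf P\}f$) in $L^\infty$, $L^3$ or $L^6$, and then applies H\"older's and Young's inequalities, $H^2\hookrightarrow L^\infty$, $H^1\hookrightarrow L^6$, \eqref{G3.1}, \eqref{rho-1}, and Lemmas \ref{LA.1}--\ref{LA.2}. Whenever a leftover $\|\partial^\alpha f\|$ (or a weighted norm of $\partial^\alpha\mathbf Pf$) arises, we rewrite it via $\|\partial^\alpha f\|\le\|\partial^\alpha\{\mathbf I-\mathbf P\}f\|_\nu+C\|\partial^\alpha(a,b)\|$, so that the first part is absorbed into the dissipation and the second lands on the right-hand side of \eqref{G3.10}. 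It is essential here that the dissipation already contains the third-order norm $\|\nabla\partial^\alpha u\|^2$ (for $|\alpha|=2$) and the mixed norm $\|\nabla_v\partial^\alpha\{\mathbf I-\mathbf P\}f\|^2$ (hidden inside $\|\partial^\alpha\{\mathbf I-\mathbf P\}f\|_\nu^2$), which is what allows the borderline terms to close.

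The main obstacle is group (iv), specifically the top-order contributions of $\partial^\alpha(\rho\mathcal Lf)$, $\partial^\alpha(\rho\,u\cdot\nabla_v f)$ and $\partial^\alpha(\rho\,u\cdot vf)$ for $|\alpha|=2$, since $\mathcal L$ carries the second velocity derivative $\Delta_v$ and the unbounded weight $|v|^2/4$ and cannot be thrown directly into an $L^p_x$ estimate. Our plan is: keep $\partial^\alpha\mathcal L=\mathcal L\partial^\alpha$; for the summand with all spatial derivatives on $f$, write $\int\rho\langle\mathcal L\partial^\alpha f,\partial^\alpha f\rangle\,{\rm d}x$ and use \eqref{G2.2}, \eqref{G2.3} together with $\|\rho\|_{L^\infty}\le C\delta$ to bound it by $C\delta\big(\|\partial^\alpha\{\mathbf I-\mathbf P\}f\|_\nu^2+\|\partial^\alpha b\|^2\big)$; for the commutator summands $\partial^\beta\rho\,\mathcal L\partial^{\alpha-\beta}f$ with $|\beta|\ge1$, first split $f=\{\mathbf I-\mathbf P\}f+\mathbf Pf$, treat $\mathcal L\mathbf Pf=-\mathbf P_1f=-b\cdot v\sqrt M$ directly (it is explicit in $v$), and for $\mathcal L\{\mathbf I-\mathbf P\}f$ integrate by parts in $v$ to land on $|\{\mathbf I-\mathbf P\}f|_\nu$ and $|\partial^{\alpha}f|_\nu$, then distribute $L^p_x$ norms with the help of a weighted Sobolev/Agmon inequality (Lemmas \ref{LA.1}--\ref{LA.2}), so that everything is bounded by $C\delta$ times the dissipation plus $C\delta\|\partial^\alpha(a,b)\|^2$. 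The terms $\rho\,u\cdot\nabla_v f$, $\rho\,u\cdot vf$ (and the merely quadratic $\rho\,u\cdot v\sqrt M$) are handled the same way, the factor $v$ being absorbed by the $\nu$-weight, and the variable-coefficient viscous commutator $[\partial^\alpha,(1+\rho)^{-1}]\Delta u$ is treated by an extra integration by parts in $x$ exactly as $I_6$ in Lemma \ref{L3.1}, using the third-order $u$-dissipation. Collecting all estimates and choosing $\lambda_2>0$ small enough (depending only on $\lambda_0$, $\mu$, $P'(1)$) to absorb the $O(\delta)$ multiples of the dissipative norms into the left-hand side yields \eqref{G3.10}.
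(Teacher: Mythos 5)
Your plan is correct and follows essentially the same route as the paper: apply $\partial^\alpha$ ($1\le|\alpha|\le2$), test with $\partial^\alpha f$, $P'(1)\partial^\alpha\rho$, $\partial^\alpha u$, extract the dissipation from $\mathcal L$ via \eqref{G2.3}, from the viscosity via \eqref{rho-1}, and from the friction term, and then absorb all Leibniz/commutator remainders using \eqref{G3.1}, Lemmas \ref{LA.1}--\ref{LLA.3} and the micro--macro splitting, exactly as in the paper's terms $I_8$--$I_{18}$. The only cosmetic difference is that you keep the principal piece $\rho\,\mathcal L\partial^\alpha f$ paired with the Dirichlet form and use $\|\rho\|_{L^\infty}\le C\delta$, while the paper moves the whole $\partial^\alpha(\rho\mathcal Lf-\rho u\cdot\nabla_vf+\rho u\cdot v\sqrt M)$ to the right-hand side ($I_{10}$) and estimates it through the divergence form of $\mathcal L$ after integrating by parts in $v$ — the two treatments are equivalent in substance.
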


\begin{proof}
Applying $\partial^{\alpha}$ with $1\leq |\alpha|\leq2$ to \eqref{I-3}--\eqref{I-5},
we find that 
\begin{equation}\label{G3.11}
\left\{\begin{aligned}
&\partial_{t}(\partial^\alpha f)+v\cdot\nabla (\partial^\alpha f)+u\cdot\nabla_v (\partial^{\alpha}f)-\partial^{\alpha}u\cdot v\sqrt{M}-\mathcal{L}(\partial^\alpha f)\\
&\quad =[-\partial^\alpha,u\cdot\nabla_v]f+\frac{1}{2}\partial^\alpha\big((1+\rho)u\cdot vf\big)+\partial^{\alpha}\big(\rho(\mathcal{L}f-u\nabla_v f+u\cdot v\sqrt{M})\big),\\
&\partial_{t} (\partial^\alpha\rho)+u\cdot\nabla\partial^\alpha\rho+(1+\rho){\rm div}\partial^\alpha u 
  =[-\partial^\alpha,\rho{\rm div}]u+[-\partial^\alpha,u\cdot\nabla]\rho,  \\
&\partial_{t}(\partial^\alpha u)+u\cdot\nabla(\partial^\alpha u)+\partial^\alpha\Big(\Big(\frac{P^{\prime}(1+\rho)}{1+\rho}-{P^{\prime}(1)}\Big)\nabla\rho\Big)
-\partial^\alpha\Big(\frac{\mu}{1+\rho}\Delta u\Big)\\
&\quad =[-\partial^\alpha,u\cdot\nabla]u+\partial^\alpha(b-u-au).
 \end{aligned}
 \right.
\end{equation}
Here, $[A, B]=AB-BA$ denotes the commutator of two operators $A$ and $B$.

Multiplying the equations in \eqref{G3.11} by $\partial^\alpha f$, $\partial^\alpha \rho$ and
$\partial^\alpha u$ respectively, then taking integration and summation,
we arrive at
\begin{align}\label{G3.12}
&\frac12\frac {\rm d}{{\rm d}t}\big(\|\partial^{\alpha}f\|^{2}+P^{\prime}(1)\|\partial^{\alpha}\rho\|^{2}+\|\partial^{\alpha}u\|^{2}\big)
+\|\partial^{\alpha}(b-u)\|^{2}
 \nonumber\\
&+\int_{\mathbb{R}^3}\left\langle-\mathcal{L}\{\mathbf{I}-\mathbf{P}\}\partial^{\alpha}f,
\partial^{\alpha}f\right\rangle {\rm d}x+\, \int_{\mathbb{R}^3}\frac{\mu}{1+\rho}|\nabla\partial^\alpha u|^2  \mathrm{d}x                  \nonumber\\
=\,&\int_{\mathbb{R}^3}\langle[-\partial^{\alpha},u\cdot\nabla_{v}]f,\partial^{\alpha}f\rangle {\rm d}x
+\frac{1}{2}\int_{\mathbb{R}^3}
\langle\partial^{\alpha}\big((1+\rho)u\cdot vf\big),\partial^{\alpha}f\rangle {\rm d}x\nonumber\\
&+\int_{\mathbb{R}^3}
\langle\partial^{\alpha}\big(\rho(\mathcal{L}f-u\nabla_v f+u\cdot v\sqrt{M})\big),\partial^{\alpha}f\rangle {\rm d}x+\frac{1}{2}P^{\prime}(1)\int_{\mathbb{R}^3}{\rm div}u|\partial^\alpha\rho|^2\mathrm{d}x\nonumber\\
&-P^{\prime}(1)\int_{\mathbb{R}^3}\rho\partial^\alpha \rho {\rm div}\partial^{\alpha} u\mathrm{d}x+P^{\prime}(1)\int_{\mathbb{R}^3}\partial^{\alpha}\rho[-\partial^\alpha,\rho{\rm div}]u\mathrm{d}x+\frac{1}{2}\int_{\mathbb{R}^3}|\partial^{\alpha} u|^2{\rm div}u\mathrm{d}x\nonumber\\
&-\int_{\mathbb{R}^3}\partial^\alpha
\Big(\Big(\frac{P^{\prime}(1+\rho)}{1+\rho}-{P^{\prime}(1)}\Big)\nabla\rho\Big)\cdot\partial^\alpha u\mathrm{d}x\nonumber\\
&+\sum_{0\leq|\beta|<|\alpha|}C_{\alpha,\beta}
\int_{\mathbb{R}^3}\partial^{\alpha-\beta}\Big(\frac{\mu}{1+\rho}\Big)\partial^\beta\Delta u\cdot\partial^\alpha u \mathrm{d}x\nonumber\\
&+\int_{\mathbb{R}^3}\partial^{\alpha}(au)\cdot\partial^{\alpha}u\mathrm{d}x
-\int_{\mathbb{R}^3}\nabla\Big(\frac{\mu}{1+\rho}\Big)\nabla\partial^\alpha u\cdot \partial^{\alpha} u   \mathrm{d}x\nonumber\\
\equiv:\,&\sum_{j=8}^{18} I_j,
\end{align}
where $C_{\alpha,\beta}$ denote  constants depending only on $\alpha $ and $\beta$.
According to \eqref{G3.1}, Lemmas \ref{LA.1}--\ref{LLA.3},  H\"{o}lder’s and Young's inequalities,
we can estimate $I_8$ and $I_9$ as   
\begin{align}\label{G3.13}
I_8=&\,\int_{\mathbb{R}^3}\langle\partial^\alpha(u f),\nabla_v\partial^\alpha f\rangle\mathrm{d}x\nonumber\\
\leq&\, C\|\nabla_v
\partial^\alpha f\|\|\nabla u\|_{H^1}\|\nabla f\|_{L_v^2(H_x^1)}\nonumber\\ 
\leq&\,C\|u\|_{H^2}\bigg(\sum_{1\leq|\alpha|\leq 2}\|\partial^\alpha\{\mathbf{I}-\mathbf{P}\}f\|_{\nu}^2+\|\nabla(a,b)\|_{H^1}^2\bigg)\nonumber\\
\leq&\,C\delta\bigg(\sum_{1\leq|\alpha|\leq 2}\|\partial^\alpha\{\mathbf{I}-\mathbf{P}\}f\|_{\nu}^2+\|\nabla(a,b)\|_{H^1}^2\bigg),\\\label{G3.14}
I_9\leq&\, C\|\nabla\big((1+\rho)u\big)\|_{H^1}\|\nabla f\|_{L_v^2(H^1)}\|v\partial^\alpha f\|\nonumber\\
\leq&\, C(\|\nabla u\|_{H^1}+\|\nabla\rho\|_{H^1}\|\nabla u\|_{H^1})
\bigg(\sum_{1\leq|\alpha|\leq 2}\|\partial^\alpha\{\mathbf{I}-\mathbf{P}\}f\|_{\nu}^2+\|\nabla(a,b)\|_{H^1}^2\bigg)\nonumber\\
\leq&\,C\delta\bigg(\sum_{1\leq|\alpha|\leq 2}\|\partial^\alpha\{\mathbf{I}-\mathbf{P}\}f\|_{\nu}^2+\|\nabla(a,b)\|_{H^1}^2\bigg).
\end{align}
For the term $I_{10}$, it follows from \eqref{A1.7}, \eqref{G3.1} and Lemmas
\ref{LA.1}--\ref{LA.2} that
\begin{align}\label{G3.15}
I_{10}\leq &\,C\Big\|\partial^\alpha\Big(\rho\sqrt{M}\nabla_v\Big(\frac{f}{\sqrt{M}}\Big)\Big)\Big\|
\Big\|\partial^\alpha\Big(\sqrt{M}\nabla_v\Big(\frac{f}{\sqrt{M}}\Big)\Big)\Big\|\nonumber\\
&\,+C\|\partial^\alpha(\rho u f)\|\|\nabla_v \partial^\alpha f\|+C\|\partial^\alpha(\rho u)\|\|\partial^\alpha b\|\nonumber\\
\leq&\,C(\|\nabla\rho\|_{H^1}+\|\nabla u\|_{H^1}\|\nabla\rho\|_{H^1})
\bigg(\sum_{1\leq|\alpha|\leq 2}\|\partial^\alpha\{\mathbf{I}-\mathbf{P}\}f\|_{\nu}^2+\|\nabla(a,b,u)\|_{H^1}^2\bigg)\nonumber\\
\leq&\,C\delta\bigg(\sum_{1\leq|\alpha|\leq 2}\|\partial^\alpha\{\mathbf{I}-\mathbf{P}\}f\|_{\nu}^2+\|\nabla(a,b,u)\|_{H^1}^2\bigg).
\end{align}
By utilizing \eqref{G3.1},  \eqref{rho-1}, Lemmas \ref{LA.1}--\ref{LLA.3}, and H\"{o}lder’s and Young's inequalities, the terms $I_j (j=11,\dots, 18)$ can be estimated as  
\begin{align}
I_{11}\leq\,& C\|{\rm div}u\|_{L^{\infty}}\|\partial^\alpha\rho\|^2\leq C\|\nabla u\|_{H^2}\|\nabla \rho\|_{H^1}^2\leq C\delta(\|\nabla\rho\|_{H^1}^2+\|\nabla u\|_{H^2}^2),\label{G3.16-1}\\
I_{12}\leq\,& C\|\rho\|_{L^{\infty}}\|\partial^\alpha\rho\|\|\nabla\partial^\alpha u\|
\leq C\|\rho\|_{H^2}\|\nabla \rho\|_{H^1}\|\nabla u\|_{H^2}\leq C\delta(\|\nabla\rho\|_{H^1}^2+\|\nabla u\|_{H^2}^2),\label{G3.16-2}\\
I_{13}\leq\,&C\|\partial^\alpha\rho\|(\|\nabla\rho\|_{L^3}\|\partial^\alpha u\|_{L^6}+\|\nabla u\|_{L^{\infty}}\|\partial^\alpha \rho\|)\leq C\|\rho\|_{H^2}(\|\nabla\rho\|_{H^1}^2+\|\nabla u\|_{H^2}^2)\nonumber\\
\leq\,& C\delta (\|\nabla\rho\|_{H^1}^2+\|\nabla u\|_{H^2}^2),\label{G3.16-3}\\
I_{14}\leq\,&C\|\partial^\alpha u\|^2\|\nabla u\|_{L^{\infty}}\leq 
C\|u\|_{H^2}\|\nabla u\|_{H^2}^2\leq C\delta \|\nabla u\|_{H^2}^2,\label{G3.16-4}\\
I_{15}=\,&\int_{\mathbb{R}^3}\partial^{\alpha-1}
\Big(\Big(\frac{P^{\prime}(1+\rho)}{1+\rho}-{P^{\prime}(1)}\Big)\nabla\rho\Big)\cdot\partial^{\alpha+1} u\mathrm{d}x\nonumber\\
\leq\,&C\|\partial^{\alpha+1}u\|(\| \rho\|_{L^{\infty}}\|\nabla\partial^2\rho\|+\|\rho\|_{H^2}\|\nabla\rho\|_{L^6})\leq C\|\rho\|_{H^2}\|\nabla \rho\|_{H^1}\|\nabla u\|_{H^2}\nonumber\\
\leq\,& C\delta(\|\nabla\rho\|_{H^1}^2+\|\nabla u\|_{H^2}^2),\label{G3.16-5}\\
I_{16}\leq\,&C\Big\|\partial^\alpha\Big(\frac{1}{1+\rho}   \Big)\Big\|\|\Delta u\|_{L^3}\|\partial^\alpha u\|_{L^6}+C\Big\|\partial^{\alpha-1}\Big(\frac{1}{1+\rho} \Big)\Big\|_{L^3}\|\nabla\Delta u\|\|\partial^\alpha u\|_{L^6}\nonumber\\
\leq\,&C\|\rho\|_{H^2}\|\nabla u\|_{H^2}^2\leq C\delta \|\nabla u\|_{H^2}^2,\nonumber\\
I_{17}\leq\,& C\|\partial^\alpha(au)\|\|\partial^\alpha u\|\leq C\|\nabla a\|_{H^1}\|\nabla u\|_{H^1}\|\nabla u\|_{H^1}\leq C\delta \|\nabla u\|_{H^2}^2,\label{G3.16-6}\\ 
I_{18}\leq\, & C\Big\|\nabla \Big(\frac{1}{1+\rho} \Big)\Big\|_{L^3}\|\nabla \partial^\alpha u\|\|\partial^\alpha u\|_{L^6}\leq C\|\rho\|_{H^2}\|\nabla u\|_{H^2}^2\leq C\delta \|\nabla u\|_{H^2}^2.\label{G3.16}
\end{align}
Putting estimates \eqref{G3.13}--\eqref{G3.16} into \eqref{G3.12},
 {summing them up}  over $1\leq|\alpha|\leq 2$, then
using \eqref{G2.3} again, we get the desired \eqref{G3.10}.
\end{proof}

Motivated by \cite{CDM-krm-2011,LMW-SIAM-2017}, in order to get the estimates of the energy dissipation rate $\|\nabla(a,b)\|_{H^1}$, we consider  the following equations of $a$ and $b$:
\begin{equation}\label{G3.17}
\left\{\begin{aligned}
&\partial_{t}a+\nabla\cdot b=0,\\
&\partial_{t} b_i+\partial_{i} a+\sum_{j=1}^3\partial_j\Gamma_{ij}(\{\mathbf{I}-\mathbf{P}\}f)=(1+\rho)(u_i-b_i)+(1+\rho)u_ia,  \\
&\partial_{i}b_j+\partial_j b_i-(1+\rho)(u_ib_j+u_jb_i)=-\partial_t \Gamma_{ij}(\{\mathbf{I}-\mathbf{P}\}f)+\Gamma_{ij}(\mathfrak{l}+\mathfrak{r}+\mathfrak{s}),  
 \end{aligned}
 \right.
\end{equation}
for $1\leq i,j\leq 3$,
where $\mathfrak{l},\mathfrak{r}$ ,and $\mathfrak{s}$ are given by
\begin{align}
\mathfrak{l}& :=\mathcal{L}\{\mathbf{I}-\mathbf{P}\}f-v\cdot\nabla\{\mathbf{I}-\mathbf{P}\}f, \label{lrs-1}\\
\mathfrak{r}& :=-u\cdot\nabla_v\{\mathbf{I}-\mathbf{P}\}f+\frac{1}{2}u\cdot v\{\mathbf{I}-\mathbf{P}\}f,\label{lrs-2}\\
\mathfrak{s}& :=\frac{\rho}{\sqrt{M}}\nabla_v\cdot\big(\nabla_v\big(\sqrt{M}\{\mathbf{I}-\mathbf{P}\}f\big)
+v\sqrt{M}\{\mathbf{I}-\mathbf{P}\}f-u\sqrt{M}\{\mathbf{I}-\mathbf{P}\}f             \big), \label{lrs-3}
\end{align}
and 
\begin{align}\label{gammaa}
\Gamma_{ij}(\cdot):=\big\langle (v_iv_j-1)\sqrt{M},\cdot   \big\rangle 
\end{align}
is the moment functional.
It is obvious that \eqref{G3.17}$_1$--\eqref{G3.17}$_2$ can be obtained
by multiplying by$\sqrt{M}$, $v_i\sqrt{M}(1\leq i \leq3)$ respectively
and then taking  integration over $\mathbb{R}_v^3$. In order to
derive \eqref{G3.17}$_3$,  we take \eqref{I-3}  in the form of
\begin{align}\label{G3.18}
&\partial_t\mathbf{P}f+v\cdot\nabla_v \mathbf{P}f+(1+\rho)u\cdot\nabla_v \mathbf{P}f\nonumber\\
&\quad -\frac{1}{2}(1+\rho) u\cdot v\mathbf{P}f+(1+\rho)(b\cdot v\sqrt{M}-u\cdot v\sqrt{M}) 
=-\partial_t \{\mathbf{I}-\mathbf{P}\}f+\mathfrak{l}+\mathfrak{r}+\mathfrak{s}. 
\end{align}
Then, by applying $\Gamma_{ij}$ to \eqref{G3.18}, we directly get \eqref{G3.17}$_3$.

Inspired by \cite{Ww-CMS-2024,CDM-krm-2011} , we denote the temporal functional $\mathcal{E}_0(t)$ as
\begin{align*}
\mathcal{E}_0(t):=\sum_{|\alpha|\leq 1}\sum_{i,j=1}^3\int_{\mathbb{R}^3}\partial^\alpha(\partial_ib_j
+\partial_jbi)\partial^{\alpha}\Gamma_{ij}(\{\mathbf{I}-\mathbf{P}\}f)\mathrm{d}x-\sum_{|\alpha|\leq 1}\partial^\alpha a\partial^{\alpha}\nabla\cdot b\mathrm{d}x. 
\end{align*}

We have  
\begin{lem}\label{L3.3}
For the strong solution $(f,\rho, u)$ to the Cauchy problem \eqref{I-3}--\eqref{I-6}, there exists a positive constant $\lambda_3$ such that
\begin{align}\label{G3.19}
\frac{\rm d}{{\rm d}t}\mathcal{E}_{0}(t)+\lambda_3\|\nabla(a,b)\|_{H^{1}}^{2}
\leq&\, C\big(\|\{\mathbf{I}-\mathbf{P}\}f\|_{L_{v}^{2}(H^{2})}^{2}+\|b-u\|_{H^{1}}^{2}\big), 
\end{align} 
for any $0 \leq t \leq T$.  
\end{lem}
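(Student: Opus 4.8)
\emph{Proof sketch.} The plan is to compute $\tfrac{\mathrm{d}}{\mathrm{d}t}\mathcal{E}_{0}(t)$ directly, using the macroscopic system \eqref{G3.17} to eliminate every time derivative, and to check that the good terms produced amount to a negative multiple $-\lambda_3\|\nabla(a,b)\|_{H^1}^2$ of the macroscopic dissipation, while all bad terms are absorbed into $\|\{\mathbf{I}-\mathbf{P}\}f\|_{L^2_v(H^2)}^2$, $\|b-u\|_{H^1}^2$ and a small multiple of the dissipation itself. Write $\mathcal{E}_0=\mathcal{E}_0^{(1)}+\mathcal{E}_0^{(2)}$ according to the two sums in its definition. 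Differentiating $\mathcal{E}_0^{(1)}$ produces, among others, the term $\sum_{|\alpha|\le1,\,i,j}\int\partial^\alpha(\partial_ib_j+\partial_jb_i)\,\partial^\alpha\partial_t\Gamma_{ij}(\{\mathbf{I}-\mathbf{P}\}f)\,\mathrm{d}x$; substituting \eqref{G3.17}$_3$ for $\partial_t\Gamma_{ij}(\{\mathbf{I}-\mathbf{P}\}f)$, its leading part is $-\sum_{|\alpha|\le1,\,i,j}\|\partial^\alpha(\partial_ib_j+\partial_jb_i)\|^2$, which after integration by parts equals $-2\|\nabla b\|_{H^1}^2-2\|\nabla\cdot b\|_{H^1}^2$. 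The companion term $\sum\int\partial^\alpha\partial_t(\partial_ib_j+\partial_jb_i)\,\partial^\alpha\Gamma_{ij}(\{\mathbf{I}-\mathbf{P}\}f)\,\mathrm{d}x$ is handled by \eqref{G3.17}$_2$: since $\partial_t(\partial_ib_j+\partial_jb_i)$ contains $-2\partial_i\partial_j a$, pairing this with $\partial^\alpha\Gamma_{ij}(\{\mathbf{I}-\mathbf{P}\}f)$ and integrating by parts in $x$ produces a cross term $\lesssim \epsilon\|\nabla a\|_{H^1}^2+C\|\{\mathbf{I}-\mathbf{P}\}f\|_{L^2_v(H^2)}^2$.

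For $\mathcal{E}_0^{(2)}$, using $\partial_t a=-\nabla\cdot b$ from \eqref{G3.17}$_1$ gives $-\sum_{|\alpha|\le1}\int\partial^\alpha\partial_t a\,\partial^\alpha(\nabla\cdot b)\,\mathrm{d}x=+\|\nabla\cdot b\|_{H^1}^2$, a bad term, but one that is strictly dominated by the $-2\|\nabla\cdot b\|_{H^1}^2$ generated in the previous step. Using $\partial_t b=-\nabla a-\nabla\cdot\Gamma(\{\mathbf{I}-\mathbf{P}\}f)+(1+\rho)(u-b)+(1+\rho)ua$ from \eqref{G3.17}$_2$, the remaining part $-\sum_{|\alpha|\le1}\int\partial^\alpha a\,\partial^\alpha(\nabla\cdot\partial_t b)\,\mathrm{d}x$ has main piece $\sum_{|\alpha|\le1}\int\partial^\alpha a\,\Delta\partial^\alpha a\,\mathrm{d}x=-\|\nabla a\|_{H^1}^2$, which supplies the remaining dissipation; the $\nabla\cdot\Gamma(\{\mathbf{I}-\mathbf{P}\}f)$ piece yields another cross term $\lesssim\epsilon\|\nabla a\|_{H^1}^2+C\|\{\mathbf{I}-\mathbf{P}\}f\|_{L^2_v(H^2)}^2$, and the friction piece, after integrating the derivative onto $\nabla\partial^\alpha a$, is bounded by $\epsilon\|\nabla a\|_{H^1}^2+C\|b-u\|_{H^1}^2$ using $\|(1+\rho)(u-b)\|_{H^1}\le C(1+\|\rho\|_{H^2})\|b-u\|_{H^1}$ and \eqref{G3.1}.

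It remains to estimate the couplings with $\{\mathbf{I}-\mathbf{P}\}f$ and the genuinely nonlinear terms. For every moment functional $\Gamma_{ij}$ applied to the micro quantities occurring in $\mathfrak{l}$, $\mathfrak{r}$, $\mathfrak{s}$ and in $\nabla\cdot\Gamma(\{\mathbf{I}-\mathbf{P}\}f)$, the key point is that $\mathcal{L}$ is self-adjoint, so $\Gamma_{ij}(\mathcal{L}g)=\langle\mathcal{L}[(v_iv_j-1)\sqrt{M}],g\rangle$ with $\mathcal{L}[(v_iv_j-1)\sqrt{M}]$ a fixed rapidly decaying function of $v$, and that in the contributions of $v\cdot\nabla$, $u\cdot\nabla_v$ and the $v$-divergence inside $\mathfrak{s}$ one integrates by parts in $v$, so that no velocity derivative of $\{\mathbf{I}-\mathbf{P}\}f$ and at most two $x$-derivatives survive; hence each such term is controlled in $L^2_x$ by $C\|\{\mathbf{I}-\mathbf{P}\}f\|_{L^2_v(H^2)}$ (and by $C\delta\|\{\mathbf{I}-\mathbf{P}\}f\|_{L^2_v(H^2)}$ for the $u$- and $\rho$-weighted pieces $\mathfrak{r}$, $\mathfrak{s}$), via Lemmas \ref{LA.1}--\ref{LLA.3}. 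Paired against $\nabla(a,b)$, Young's inequality turns these into $\epsilon\|\nabla(a,b)\|_{H^1}^2+C_\epsilon\|\{\mathbf{I}-\mathbf{P}\}f\|_{L^2_v(H^2)}^2$. Finally, the truly nonlinear terms $(1+\rho)(u_ib_j+u_jb_i)$, $(1+\rho)ua$ and the lower-order commutator pieces of $\partial_t b$ are at least quadratic in $(\rho,u,b,\{\mathbf{I}-\mathbf{P}\}f)$; writing $u=(u-b)+b$ and invoking \eqref{G3.1}, the Sobolev and Gagliardo--Nirenberg inequalities and Young's inequality, they are bounded by $C\delta\|\nabla(a,b)\|_{H^1}^2+C\|\{\mathbf{I}-\mathbf{P}\}f\|_{L^2_v(H^2)}^2+C\|b-u\|_{H^1}^2$, hence absorbed. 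Summing over $|\alpha|\le1$, collecting all the above, and choosing $\epsilon$ and $\delta$ small, the net contribution of the dissipative terms is at most $-2\|\nabla b\|_{H^1}^2-\|\nabla\cdot b\|_{H^1}^2-\|\nabla a\|_{H^1}^2+(C\epsilon+C\delta)\|\nabla(a,b)\|_{H^1}^2\le-\lambda_3\|\nabla(a,b)\|_{H^1}^2$, which is \eqref{G3.19}.

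The step I expect to be most delicate is the bookkeeping in the third paragraph: one must verify that the transport term $v\cdot\nabla\{\mathbf{I}-\mathbf{P}\}f$ in $\mathfrak{l}$ and the term $\nabla\cdot\Gamma(\{\mathbf{I}-\mathbf{P}\}f)$ in \eqref{G3.17}$_2$ together cost no more than two $x$-derivatives of $\{\mathbf{I}-\mathbf{P}\}f$, so that only $\|\{\mathbf{I}-\mathbf{P}\}f\|_{L^2_v(H^2)}$, and not $L^2_v(H^3)$, enters, and that the signs in the definition of $\mathcal{E}_0$ are such that the $+\|\nabla\cdot b\|_{H^1}^2$ produced by $\partial_t a=-\nabla\cdot b$ is genuinely dominated by the $-2\|\nabla\cdot b\|_{H^1}^2$ coming from the symmetrized gradient, leaving $\lambda_3>0$. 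The nonlinear estimates themselves, though numerous, are routine given \eqref{G3.1} and Lemmas \ref{LA.1}--\ref{LLA.3}.
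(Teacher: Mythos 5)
Your proposal is correct and is essentially the paper's own argument rearranged: the paper extracts the same interaction functional $\mathcal{E}_0$ by writing $\sum_{i,j}\|\partial^\alpha(\partial_ib_j+\partial_jb_i)\|^2$ and $\|\partial^\alpha\nabla a\|^2$ through \eqref{G3.17}$_3$ and \eqref{G3.17}$_2$ and pulling out the time derivatives, which is the same computation as your direct differentiation of $\mathcal{E}_0$ with substitution of \eqref{G3.17}, including the cancellation of the bad $\|\nabla\cdot b\|_{H^1}^2$ term, the $\Delta a$ source of the $\|\nabla a\|_{H^1}^2$ dissipation, the use of $\Gamma_{ij}$ to absorb velocity weights and derivatives so only $\|\{\mathbf{I}-\mathbf{P}\}f\|_{L^2_v(H^2)}$ appears, and the absorption of nonlinear terms via \eqref{G3.1}.
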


\begin{proof}
On  {the} one hand, we have
\begin{align}\label{G3.20}
\sum_{i,j=1}^3\|\partial^\alpha(\partial_ib_j+\partial_jb_i)\|^2=2\|\nabla\partial^\alpha b\|^2+2\|\nabla\cdot\partial^\alpha b\|^2,   
\end{align}
for any $|\alpha|\leq 1$.
On the other hand, by using \eqref{G3.17}$_3$, it gives
\begin{align}\label{G3.21}
&\sum_{i,j=1}^3\|\partial^\alpha(\partial_ib_j+\partial_jb_i)\|^2\nonumber\\
=\,&\sum_{i,j=1}^3\int_{\mathbb{R}^3}\partial^\alpha(\partial_ib_j+\partial_jb_i)
\partial^\alpha\big((1+\rho)(u_ib_j+u_jb_i)-\partial_t \Gamma_{ij}(\{\mathbf{I}-\mathbf{P}\}f)+\Gamma_{ij}(\mathfrak{l}+\mathfrak{r}+\mathfrak{s})\big)\mathrm{d}x        \nonumber\\
=\,&-\frac{{\rm d}}{{\rm d}t}\sum_{i,j=1}^3\int_{\mathbb{R}^3}\partial^\alpha(\partial_ib_j+\partial_jb_i)\partial^\alpha\Gamma_{ij}(\{\mathbf{I}-\mathbf{P}\}f)\mathrm{d}x\nonumber\\
&+\sum_{i,j=1}^3\int_{\mathbb{R}^3}\partial^\alpha(\partial_i\partial_t b_j+\partial_j\partial_t b_i)\partial^\alpha\Gamma_{ij}(\{\mathbf{I}-\mathbf{P}\}f)\mathrm{d}x\nonumber\\
&+\sum_{i,j=1}^3\int_{\mathbb{R}^3}\partial^\alpha(\partial_ib_j+\partial_jb_i)\partial^\alpha
\big((1+\rho)(u_ib_j+u_jb_i)+\Gamma_{ij}(\mathfrak{l}+\mathfrak{r}+\mathfrak{s})   \big)\mathrm{d}x.
\end{align}
It follows from \eqref{G3.1}, \eqref{G3.17}$_2$, Lemmas \ref{LA.1}--\ref{LA.2} and Young's inequality that
\begin{align}\label{G3.22}
&\sum_{i,j=1}^3\int_{\mathbb{R}^3}\partial^\alpha(\partial_i\partial_t b_j+\partial_j\partial_t b_i)\partial^\alpha\Gamma_{ij}(\{\mathbf{I}-\mathbf{P}\}f)\mathrm{d}x \nonumber\\ 
=\,& -2\sum_{i,j=1}^3\int_{\mathbb{R}^3}\partial^\alpha\partial_t b_i
\partial^\alpha\partial_j\Gamma_{ij}(\{\mathbf{I}-\mathbf{P}\}f)\mathrm{d}x\nonumber\\
=\,& 2\sum_{i,j=1}^3\int_{\mathbb{R}^3}\partial^\alpha\bigg(\partial_i a+\sum_{m=1}^3\partial_m\Gamma_{im}(\{\mathbf{I}-\mathbf{P}\}f)       \bigg)\partial^\alpha\partial_j\Gamma_{ij}(\{\mathbf{I}-\mathbf{P}\}f)\mathrm{d}x\nonumber\\
&-2\sum_{i,j=1}^3\int_{\mathbb{R}^3}\partial^\alpha\big((1+\rho)(u_i-b_i)+(1+\rho)u_i a  \big)\partial^\alpha\partial_j\Gamma_{ij}(\{\mathbf{I}-\mathbf{P}\}f)\mathrm{d}x\nonumber\\
\leq\,& \frac{1}{4}\|\nabla a\|_{H^1}^2+C\|\nabla \{\mathbf{I}-\mathbf{P}\}f \|_{L_v^2(H^1)}^2+C(1+\|\rho\|_{H^{2}})^2\|u-b\|_{H^1}^2\nonumber\\
&+C(1+\|\rho\|_{H^2})^2\|u\|_{H^1}^2\|\nabla a\|_{H^1}^2\nonumber\\
\leq\,&\Big(\frac{1}{4}+C\delta\Big)\|\nabla a\|_{H^1}^2+C\|\nabla \{\mathbf{I}-\mathbf{P}\}f \|_{L_v^2(H^1)}^2+C\|u-b\|_{H^1}^2.
\end{align}

Next, we handle the last term on the right hand-side of \eqref{G3.21}.
By making use of Young's inequality, it yields
\begin{align}\label{G3.23}
&\sum_{i,j=1}^3\int_{\mathbb{R}^3}\partial^\alpha(\partial_ib_j+\partial_jb_i)\partial^\alpha\big((1+\rho)(u_ib_j+u_jb_i)+\Gamma_{ij}(\mathfrak{l}+\mathfrak{r}+\mathfrak{s})      \big)\mathrm{d}x\nonumber\\
 &\quad \leq\frac{1}{2}\sum_{i,j=1}^3\|\partial^\alpha(\partial_ib_j+\partial_jb_i)\|^2+C\sum_{i,j=1}^3
\big\|\partial^\alpha\big((1+\rho)(u_ib_j+u_jb_i)\big)\big\|^2\nonumber\\
&\qquad +C\sum_{i,j=1}^3\big(\|\partial^\alpha\Gamma_{ij}(\mathfrak{l})\|^2
+\|\partial^\alpha\Gamma_{ij}(\mathfrak{r})\|^2+\|\partial^\alpha\Gamma_{ij}(\mathfrak{s})\|^2                  \big),
\end{align}
By  utilizing  \eqref{G3.1}, Lemmas \ref{LA.1}--\ref{LA.2} and
 the property that $\Gamma_{ij}(\cdot)$ can absorb any velocity derivative
and velocity weight, the last two terms on the above inequality can  be estimated as 
\begin{align}\label{G3.24}
&\sum_{i,j=1}^3
\big\|\partial^\alpha\big((1+\rho)(u_ib_j+u_jb_i)\big)\big\|^2+\sum_{i,j=1}^3\big(\|\partial^\alpha\Gamma_{ij}(\mathfrak{l})\|^2+\|\partial^\alpha\Gamma_{ij}(\mathfrak{r})\|^2+\|\partial^\alpha\Gamma_{ij}(\mathfrak{s})\|^2                  \big)\nonumber\\
\leq\,&C\|(1+\|\nabla\rho\|_{H^1})^2\|u\otimes b\|_{H^1}^2+C\| \{\mathbf{I}-\mathbf{P}\}f \|_{L_v^2(H^2)}^2\nonumber\\
&+C\|u\|_{H^1}^2\|\nabla  \{\mathbf{I}-\mathbf{P}\}f \|_{L_v^2(H^1)}^2+C(1+\|\rho\|_{H^2})^2\|\nabla u\|_{H^1}^2\|\nabla  \{\mathbf{I}-\mathbf{P}\}f \|_{L_v^2(H^1)}^2\nonumber\\
\leq\,&C\delta\|\nabla b\|_{H^1}^2+C\| \{\mathbf{I}-\mathbf{P}\}f \|_{L_v^2(H^2)}^2.
\end{align}

 Combining \eqref{G3.23} and \eqref{G3.24}, we  obtain
\begin{align}\label{NJK3.36}
&\sum_{i,j=1}^3\int_{\mathbb{R}^3}\partial^\alpha(\partial_ib_j+\partial_jb_i)\partial^\alpha\big((1+\rho)(u_ib_j+u_jb_i)+\Gamma_{ij}(\mathfrak{l}+\mathfrak{r}+\mathfrak{s})      \big)\mathrm{d}x\nonumber\\  
&\quad \leq\frac{1}{2}\sum_{i,j=1}^3\|\partial^\alpha(\partial_ib_j+\partial_jb_i)\|^2+C\delta\|\nabla b\|_{H^1}^2+C\| \{\mathbf{I}-\mathbf{P}\}f \|_{L_v^2(H^2)}^2.
\end{align}

Inserting \eqref{G3.22}, \eqref{NJK3.36} into \eqref{G3.21}, then leveraging
\eqref{G3.20}, we conclude that
\begin{align}\label{G3.25}
&\frac{{\rm d}}{{\rm d}t}\sum_{|\alpha|\leq 1}\sum_{i,j=1}^3\int_{\mathbb{R}^3}\partial^\alpha(\partial_ib_j+\partial_jb_i)\partial^\alpha\Gamma_{ij}(\{\mathbf{I}-\mathbf{P}\}f)\mathrm{d}x+\|\nabla b\|_{H^1}^2+\|\nabla\cdot b\|_{H^1}^2\nonumber\\
&\,\quad \leq \Big(\frac{1}{4}+C\delta\Big)\|\nabla a\|_{H^1}^2+C\|u-b\|_{H^1}^2+C\| \{\mathbf{I}-\mathbf{P}\}f \|_{L_v^2(H^2)}^2.
\end{align}
From \eqref{G3.17}$_1$--\eqref{G3.17}$_2$, one has
\begin{align}\label{G3.26}
 \| \partial^\alpha\nabla a\|^2
=\,&\sum_{i=1}^3\int_{\mathbb{R}^3}\partial^\alpha\partial_i a\partial^\alpha\partial_i a\mathrm{d}x\nonumber\\
=\,&\sum_{i=1}^3\int_{\mathbb{R}^3}\partial^\alpha\partial_i a\partial^\alpha\bigg(-\partial_{t} b_i-\sum_{j=1}^3\partial_j\Gamma_{ij}(\{\mathbf{I}-\mathbf{P}\}f) 
+(1+\rho)(u_i-b_i)+(1+\rho)u_ia        \bigg)\mathrm{d}x\nonumber\\
=\,&-\frac{{\rm d}}{{\rm d}t}\sum_{i=1}^3\int_{\mathbb{R}^3}\partial^\alpha\partial_i a\partial^\alpha b_i\mathrm{d}x+\sum_{i=1}^3\int_{\mathbb{R}^3}\partial^\alpha\partial_i\partial_t a
\partial^\alpha b_i\mathrm{d}x\nonumber\\
&+\sum_{i=1}^3\int_{\mathbb{R}^3}\partial^\alpha\partial_i a\partial^\alpha\bigg(-\sum_{j=1}^3\partial_j\Gamma_{ij}(\{\mathbf{I}-\mathbf{P}\}f) 
+(1+\rho)(u_i-b_i)+(1+\rho)u_ia        \bigg)\mathrm{d}x.
\end{align}
Notice that from \eqref{G3.17}$_1$,
\begin{align}\label{G3.27}
\sum_{i=1}^3\int_{\mathbb{R}^3}\partial^\alpha\partial_i\partial_t a
\partial^\alpha b_i\mathrm{d}x=-\int_{\mathbb{R}^3}\partial^\alpha\partial_t a
\partial^\alpha\nabla\cdot b\mathrm{d}x =\|\partial^\alpha\nabla\cdot b\|^2.  
\end{align}
Similar to the estimate \eqref{G3.22}, we reach
\begin{align}\label{G3.28}
 &\sum_{i=1}^3\int_{\mathbb{R}^3}\partial^\alpha\partial_i a\partial^\alpha\bigg(-\sum_{j=1}^3\partial_j\Gamma_{ij}(\{\mathbf{I}-\mathbf{P}\}f) 
+(1+\rho)(u_i-b_i)+(1+\rho)u_ia        \bigg)\mathrm{d}x\nonumber\\
\leq\,&\frac{1}{4}\|\nabla a\|_{H^1}^2+C\|\nabla\{\mathbf{I}-\mathbf{P}\}f\|_{L_v^2(H^1)}^2+C(1+\|\rho\|_{H^2})^2\|u-b\|_{H^1}^2\nonumber\\
&+C(1+\|\rho\|_{H^2})^2\|u\|_{H^1}^2\|\nabla a\|_{H^1}^2\nonumber\\
\leq\,&\Big(\frac{1}{4}+C\delta\Big)\|\nabla a\|_{H^1}^2+C\|u-b\|_{H^1}^2+C\|\{\mathbf{I}-\mathbf{P}\}f\|_{L_v^2(H^2)}^2.
\end{align}
Combining \eqref{G3.27}--\eqref{G3.28} with \eqref{G3.26}, and summing them up
over $|\alpha|\leq 1$, we infer that
\begin{align}\label{G3.29}
&-\frac{{\rm d}}{{\rm d}t}\sum_{|\alpha|\leq 1}\int_{\mathbb{R}^3}\partial^\alpha a\partial^\alpha\nabla\cdot b\mathrm{d}x+\Big(\frac{3}{4}-C\delta\Big)\|\nabla a\|_{H^1}^2\nonumber\\
&\quad \leq\|\nabla\cdot b\|_{H^1}^2+C\|u-b\|_{H^1}^2+C\|\{\mathbf{I}-\mathbf{P}\}f\|_{L_v^2(H^2)}^2.
\end{align}
Adding \eqref{G3.25} to \eqref{G3.29} gives
\begin{align*}
&\frac{{\rm d}}{{\rm d}t}\mathcal{E}_0(t)+\|\nabla b\|_{H^1}^2+\Big(\frac{1}{2}-C\delta \Big)\|\nabla a\|_{H^1}^2\leq C\|u-b\|_{H^1}^2+C\|\{\mathbf{I}-\mathbf{P}\}f\|_{L_v^2(H^2)}^2,
\end{align*}
which implies that \eqref{G3.19} holds.  
\end{proof}

\begin{lem}\label{L3.4}
For the strong solution $(f,\rho, u)$ to the Cauchy problem \eqref{I-3}--\eqref{I-6}, there exists a positive constant $\lambda_4$ such that
\begin{align}\label{G3.30}
\frac{\rm d}{{\rm d}t}\sum_{|\alpha|\leq 1}\int_{\mathbb{R}^3}\partial^\alpha u\cdot\partial^\alpha\nabla\rho\mathrm{d}x+\lambda_4\|\nabla\rho\|_{H^{1}}^{2}
\leq&\, C\Big(\|\nabla u\|_{H^2}^2+\|b-u\|_{H^{1}}^{2}\Big), 
\end{align} 
for any $0 \leq t \leq T$.
\end{lem}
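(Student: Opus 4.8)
The plan is to recover the dissipation of $\nabla\rho$, which the system does not damp directly, by differentiating in time the cross functional $\sum_{|\alpha|\leq1}\int_{\mathbb{R}^3}\partial^\alpha u\cdot\partial^\alpha\nabla\rho\,\mathrm{d}x$: the pressure gradient in the momentum equation, paired against the velocity divergence furnished by the continuity equation, supplies a term of size $-P'(1)\|\nabla\rho\|_{H^1}^2$. Concretely, I would apply $\partial^\alpha$ with $|\alpha|\leq1$ to \eqref{I-4}--\eqref{I-5} (the differentiated equations being those in \eqref{G3.11}) and, for each such $\alpha$, write
\[
\frac{\mathrm{d}}{\mathrm{d}t}\int_{\mathbb{R}^3}\partial^\alpha u\cdot\partial^\alpha\nabla\rho\,\mathrm{d}x=\int_{\mathbb{R}^3}\partial_t\partial^\alpha u\cdot\partial^\alpha\nabla\rho\,\mathrm{d}x-\int_{\mathbb{R}^3}\mathrm{div}\,\partial^\alpha u\;\partial_t\partial^\alpha\rho\,\mathrm{d}x,
\]
integrating by parts in the last integral to move $\nabla$ off $\rho$. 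Substituting the $\partial^\alpha$-differentiated continuity equation into the second integral produces $\|\mathrm{div}\,\partial^\alpha u\|^2$, which I move to the right-hand side and control by $\|\nabla u\|_{H^2}^2$, together with quadratic remainders, each carrying an $O(\delta)$ factor through \eqref{G3.1}, \eqref{rho-1}, and the embedding $H^2\hookrightarrow L^\infty$.

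Into the first integral I substitute the $\partial^\alpha$-differentiated momentum equation. Its pressure part yields, after isolating the principal term, $-\int_{\mathbb{R}^3}\tfrac{P'(1+\rho)}{1+\rho}|\partial^\alpha\nabla\rho|^2\,\mathrm{d}x$ up to a commutator bounded by $C\delta\|\nabla\rho\|_{H^1}^2$; writing $\tfrac{P'(1+\rho)}{1+\rho}=P'(1)+\big(\tfrac{P'(1+\rho)}{1+\rho}-P'(1)\big)$ with $\big|\tfrac{P'(1+\rho)}{1+\rho}-P'(1)\big|\leq C|\rho|$ and using \eqref{rho-1}, this is at most $-P'(1)\|\partial^\alpha\nabla\rho\|^2+C\delta\|\nabla\rho\|_{H^1}^2$, so after summing over $|\alpha|\leq1$ the good term $-\lambda_4\|\nabla\rho\|_{H^1}^2$ emerges, with $\lambda_4$ a fixed fraction of $P'(1)$. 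The transport terms, the commutators $[-\partial^\alpha,u\cdot\nabla]u$, $[-\partial^\alpha,u\cdot\nabla]\rho$, $[-\partial^\alpha,\rho\,\mathrm{div}]u$, and the term $\partial^\alpha(au)$ are all quadratic and, by H\"older, Young, and Lemmas \ref{LA.1}--\ref{LLA.3}, fall into $C\delta(\|\nabla\rho\|_{H^1}^2+\|\nabla u\|_{H^2}^2)$; the linear friction contribution is split by Young as $\int\partial^\alpha(b-u)\cdot\partial^\alpha\nabla\rho\leq\epsilon\|\nabla\rho\|_{H^1}^2+C_\epsilon\|b-u\|_{H^1}^2$.

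The step I expect to be the main obstacle is the viscous term $\partial^\alpha\big(\tfrac{\mu}{1+\rho}\Delta u\big)$: when $|\alpha|=1$ its principal part $\tfrac{\mu}{1+\rho}\nabla\Delta u$ is a third-order derivative of $u$, forcing the bound $\int\tfrac{\mu}{1+\rho}\nabla\Delta u\cdot\nabla^2\rho\leq\epsilon\|\nabla^2\rho\|^2+C_\epsilon\|\nabla^3u\|^2\leq\epsilon\|\nabla\rho\|_{H^1}^2+C_\epsilon\|\nabla u\|_{H^2}^2$, which is precisely why $\|\nabla u\|_{H^2}^2$, and not merely $\|\nabla u\|_{H^1}^2$, must appear on the right of \eqref{G3.30}; the commutators from $\partial^\alpha$ hitting the coefficient $\tfrac1{1+\rho}$ are again $O(\delta)$. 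Finally, choosing $\epsilon$ small and then $\delta$ small absorbs every $\epsilon$- and $C\delta$-multiple of $\|\nabla\rho\|_{H^1}^2$ into $-P'(1)\|\nabla\rho\|_{H^1}^2$, which yields \eqref{G3.30}. The difficulty is thus bookkeeping rather than conceptual — expanding the quasilinear coefficients $\tfrac{P'(1+\rho)}{1+\rho}$ and $\tfrac1{1+\rho}$ by the chain rule and checking that no unabsorbable multiple of $\|\nabla\rho\|_{H^1}^2$ survives — and no device beyond those already used in Lemmas \ref{L3.1}--\ref{L3.3} is needed.
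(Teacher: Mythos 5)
Your proposal is correct and follows essentially the same route as the paper: the paper tests the $\partial^\alpha$-differentiated momentum equation against $\nabla\partial^\alpha\rho$ to isolate $P'(1)\|\nabla\partial^\alpha\rho\|^2$, and the time-derivative term $I_{19}$ then produces exactly your cross functional together with the continuity-equation substitution bounded by $\|\nabla u\|_{H^2}^2$, which is just your identity rearranged. All the remaining ingredients you list (Young for the friction term, $O(\delta)$ absorption of the quasilinear commutators, and the viscous term forcing $\|\nabla u\|_{H^2}^2$ rather than $\|\nabla u\|_{H^1}^2$) coincide with the paper's estimates $I_{20}$--$I_{24}$.
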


\begin{proof}
Applying $\partial^\alpha$ with $|\alpha|\leq 1$ to \eqref{I-5},
then multiplying the result by $\nabla\partial^\alpha\rho$ and  integrating over $\mathbb{R}^3$, one gets   
\begin{align}\label{G3.31}
P^\prime(1)\|\nabla\partial^\alpha\rho\|^2=
\, &-\int_{\mathbb{R}^3}\nabla\partial^\alpha\rho\cdot\partial^\alpha\partial_t u\mathrm{d}x+\int_{\mathbb{R}^3}\nabla\partial^\alpha\rho\cdot\partial^\alpha(b-u)\mathrm{d}x   \nonumber\\
&+\int_{\mathbb{R}^3}\nabla\partial^\alpha\rho\cdot\partial^\alpha(au)\mathrm{d}x-\int_{\mathbb{R}^3}\nabla\partial^\alpha\rho\cdot\partial^\alpha(u\cdot\nabla u)\mathrm{d}x\nonumber\\
&-\int_{\mathbb{R}^3}\nabla\partial^\alpha\rho\cdot\partial^\alpha\Big(\Big(\frac{P^{\prime}(1)}{1+\rho}-P^{\prime}(1)       \Big)\nabla\rho\Big)\mathrm{d}x+\int_{\mathbb{R}^3}\nabla\partial^\alpha\rho\cdot\partial^\alpha\Big(\frac{\mu\Delta u}{1+\rho}        \Big)\mathrm{d}x\nonumber\\
\equiv:\,&\sum_{j=19}^{24}I_j.
\end{align}
First, for the term $I_{19}$, using \eqref{I-4}  and leveraging Lemmas \ref{LA.1}--\ref{LA.2}, it holds that 
\begin{align}\label{G3.32}
I_{19}=&-\frac{{\rm d}}{{\rm d}t}\int_{\mathbb{R}^3}\nabla  \partial^\alpha\rho\cdot\partial^\alpha u\mathrm{d}x+\int_{\mathbb{R}^3}\nabla  \partial^\alpha\partial_t\rho\cdot\partial^\alpha u\mathrm{d}x\nonumber\\
=&-\frac{{\rm d}}{{\rm d}t}\int_{\mathbb{R}^3}\nabla  \partial^\alpha\rho\cdot\partial^\alpha u\mathrm{d}x-\int_{\mathbb{R}^3}\nabla\partial^\alpha\big((\rho+1){\rm div} u+\nabla\rho\cdot u            \big)\cdot \partial^\alpha u\mathrm{d}x\nonumber\\
=&-\frac{{\rm d}}{{\rm d}t}\int_{\mathbb{R}^3}\nabla  \partial^\alpha\rho\cdot\partial^\alpha u\mathrm{d}x+\int_{\mathbb{R}^3}\partial^\alpha\big((\rho+1){\rm div} u+\nabla\rho\cdot u            \big) \partial^\alpha{\rm div} u\mathrm{d}x\nonumber\\
\leq&-\frac{{\rm d}}{{\rm d}t}\int_{\mathbb{R}^3}\nabla  \partial^\alpha\rho\cdot\partial^\alpha u\mathrm{d}x+C\|\partial^\alpha{\rm div}u
\|^2+C(\|\rho\|_{H^1}^2\|\nabla u\|_{H^2}^2+\|\nabla \rho\|_{H^1}^2\|\nabla u\|_{H^1}^2)\nonumber\\
\leq&-\frac{{\rm d}}{{\rm d}t}\int_{\mathbb{R}^3}\nabla  \partial^\alpha\rho\cdot\partial^\alpha u\mathrm{d}x+C\|\nabla u\|_{H^2}^2.
\end{align}
Next, with aid of \eqref{G3.1}  \eqref{rho-1},  H\"{o}lder’s and Young's inequalities, and Lemmas \ref{LA.1}--\ref{LLA.3}, we can estimate the terms $I_{20}$ to $I_{24}$ as  
\begin{align}
I_{20}\leq\,&\frac{1}{6}P^{\prime}(1)\|\nabla\partial^\alpha\rho\|^2+C\|u-b\|_{H^1}^2,\label{G3.33-1}\\
I_{21}\leq\,&\frac{1}{6}P^{\prime}(1)\|\nabla\partial^\alpha\rho\|^2+C\|a\|_{H^1}^2\|\nabla u\|_{H^1}^2\leq\frac{1}{6}P^{\prime}(1)\|\nabla\partial^\alpha\rho\|^2+C\delta\|\nabla u\|_{H^2}^2,\label{G3.33-2}\\
I_{22}\leq\,&\frac{1}{6}P^{\prime}(1)\|\nabla\partial^\alpha\rho\|^2+C\|u\|_{H^1}^2\|\nabla u\|_{H^2}^2\leq\frac{1}{6}P^{\prime}(1)\|\nabla\partial^\alpha\rho\|^2+C\delta\|\nabla u\|_{H^2}^2,\label{G3.33-3}\\
I_{23}\leq\,& \frac{1}{6}P^{\prime}(1)\|\nabla\partial^\alpha\rho\|^2+C\|\rho\|_{H^2}^2\|\nabla\rho\|_{H^1}^2\leq\frac{1}{6}P^{\prime}(1)\|\nabla\partial^\alpha\rho\|^2+C\delta\|\nabla \rho\|_{H^1}^2,\label{G3.33-4}\\
I_{24}\leq\,& \frac{1}{6}P^{\prime}(1)\|\nabla\partial^\alpha\rho\|^2+C\|\Delta
 u\|^2+C\|\nabla\Delta u\|^2+C\Big\|\nabla\Big(\frac{1}{1+\rho}\Big)\Big\|_{H^1}^2\|\Delta u\|_{H^1}^2\nonumber\\
\leq\,&\frac{1}{6}P^{\prime}(1)\|\nabla\partial^\alpha\rho\|^2+C\|\nabla u\|_{H^2}^2+C\|\rho\|_{H^2}^2\|\nabla u\|_{H^2}^2\nonumber\\
\leq\,&\frac{1}{6}P^{\prime}(1)\|\nabla\partial^\alpha\rho\|^2+C\|\nabla u\|_{H^2}^2. \label{G3.33}
\end{align}
Plugging the estimates \eqref{G3.32}--\eqref{G3.33} into \eqref{G3.31}
and {{summing them up}} over $|\alpha|\leq1$, we eventually get \eqref{G3.30}.
\end{proof}

Finally, we give the estimate of the mixed space-velocity derivatives of $f$, i.e., $\partial^\alpha_x\partial^\beta_v f$.
Due to the fact that $\|\partial^\alpha_x \partial^\beta_v \mathbf{P}f\|\leq C\|\partial^\alpha f\|$, we just need to estimate $\|\partial_x^\alpha\partial_v^\beta \{\mathbf{I}-\mathbf{P}\}f\|$.
By  applying the operator $\{\mathbf{I}-\mathbf{P}\}$
to \eqref{I-3} and  using \eqref{G2.1}, we arrive at
\begin{align}\label{G3.34}
&\partial_t\{\mathbf{I}-\mathbf{P}\}f+v\cdot\nabla\{\mathbf{I}-\mathbf{P}\}f
+(1+\rho)u\cdot\nabla_v\{\mathbf{I}-\mathbf{P}\}f-\frac{1}{2}(1+\rho)u\cdot v\{\mathbf{I}-\mathbf{P}\}f\nonumber\\
&\quad =(\rho+1)\mathcal{L}\{\mathbf{I}-\mathbf{P}\}f+\mathbf{P}\Big(v\cdot\nabla\{\mathbf{I}-\mathbf{P}\}f+(1+\rho)u\cdot\nabla_v \{\mathbf{I}-\mathbf{P}\}f      -\frac{1}{2}(1+\rho)u\cdot v\{\mathbf{I}-\mathbf{P}\}f\Big)\nonumber\\
&\qquad -\{\mathbf{I}-\mathbf{P}\}\Big(v\cdot\nabla\mathbf{P}f+(1+\rho)u\cdot\nabla_v \mathbf{P}f-\frac{1}{2}(1+\rho)u\cdot v\mathbf{P}f                   \Big).
\end{align}
Here, we have used the following facts:
\begin{align*}
\{\mathbf{I}-\mathbf{P}\}(u\cdot v\sqrt{M})=0, \quad \{\mathbf{I}-\mathbf{P}\}\big((1+\rho)u\cdot v\sqrt{M}\big)=0.
\end{align*}

\begin{lem}\label{L3.5}
Let $1\leq k\leq 2$.
For the strong solution $(f,\rho, u)$ to the Cauchy problem \eqref{I-3}--\eqref{I-6}, there exists a positive constant $\lambda_5$ such that
 \begin{align}\label{G3.35}
& \frac{\mathrm{d}}{\mathrm{d} t}\sum_{1\leq k\leq2}C_k\sum_{\substack{|\beta|=k \\
|\alpha|+|\beta| \leq 2}}\|\partial^\alpha_x\partial^\beta_v\{\mathbf{I}-\mathbf{P}\} f\|^2+\lambda_5\sum_{\substack{1\leq|\beta|\leq 2 \\
|\alpha|+|\beta| \leq 2}}\|\partial^\alpha_x\partial^\beta_v\{\mathbf{I}-\mathbf{P}\} f\|_\nu^2\nonumber \\
&\quad  \leq  C\Big(\|\nabla (a, b)\|_{H^1}^2+\sum_{|\alpha| \leq 2}\left\|\partial^\alpha\{\mathbf{I}-\mathbf{P}\} f\right\|_\nu^2\Big),
\end{align}
 for any $0 \leq t \leq T$ and any $T>0$, and $C_k$ are some positive constants.
\end{lem}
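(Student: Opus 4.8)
The plan is to perform weighted energy estimates on the equation \eqref{G3.34} for $\{\mathbf I-\mathbf P\}f$, carefully incorporating the velocity derivatives $\partial_v^\beta$ with $1\le|\beta|\le 2$. I would proceed by induction on $k=|\beta|$. For a fixed multi-index $\beta$ with $|\beta|=k$ and $|\alpha|+|\beta|\le 2$, I first apply $\partial_x^\alpha\partial_v^\beta$ to \eqref{G3.34}, then multiply by $\partial_x^\alpha\partial_v^\beta\{\mathbf I-\mathbf P\}f$ and integrate over $\mathbb R^3_x\times\mathbb R^3_v$. The dissipative term comes from $(\rho+1)\mathcal L\{\mathbf I-\mathbf P\}f$: the leading contribution $\langle-\mathcal L\,\partial_v^\beta\partial_x^\alpha\{\mathbf I-\mathbf P\}f,\partial_v^\beta\partial_x^\alpha\{\mathbf I-\mathbf P\}f\rangle$ produces, via \eqref{G2.3} (applied with $\{\mathbf I-\mathbf P\}$ on the micro part), a term controlling $\|\partial_x^\alpha\partial_v^\beta\{\mathbf I-\mathbf P\}f\|_\nu^2$, while the commutator $[\partial_v^\beta,\mathcal L]$ and the factor $\rho$ generate lower-order velocity-derivative terms, absorbable either into the dissipation on the right side of \eqref{G3.35} (those with $|\beta'|<k$ are handled by the inductive hypothesis / the constants $C_k$ chosen appropriately) or into $C\delta$ times the same norm using \eqref{G3.1}. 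The transport term $v\cdot\nabla\{\mathbf I-\mathbf P\}f$, after differentiation by $\partial_v^\beta$, yields a commutator $[\partial_v^\beta,v\cdot\nabla]\{\mathbf I-\mathbf P\}f$ of the form $\sum\partial_{v_i}^{\beta-e_i}\nabla_x\{\mathbf I-\mathbf P\}f$; since this has strictly fewer $v$-derivatives but one more $x$-derivative, it is controlled by $\|\partial_x^{\alpha+e_i}\partial_v^{\beta-e_i}\{\mathbf I-\mathbf P\}f\|$, which by the induction on $k$ (and the total constraint $|\alpha|+|\beta|\le 2$) is part of the dissipation already bounded at level $k-1$, so it can be absorbed after multiplying by a small constant $C_k$.

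The remaining nonlinear terms on the right-hand side of \eqref{G3.34} — namely $(1+\rho)u\cdot\nabla_v\{\mathbf I-\mathbf P\}f$, $\tfrac12(1+\rho)u\cdot v\{\mathbf I-\mathbf P\}f$, the macro projection $\mathbf P(\cdots)$, and the micro projection $\{\mathbf I-\mathbf P\}(v\cdot\nabla\mathbf Pf+(1+\rho)u\cdot\nabla_v\mathbf Pf-\tfrac12(1+\rho)u\cdot v\mathbf Pf)$ — are all estimated by Hölder, Sobolev, and Young's inequalities together with \eqref{G3.1}, exactly as in Lemmas \ref{L3.1}--\ref{L3.2}. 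The key structural points: (i) $\mathbf P$ and $\Gamma_{ij}$-type moment operators absorb any velocity weight and velocity derivative, so terms involving $\mathbf P f$ contribute only $\|\partial^\alpha(a,b)\|$-norms, giving the $\|\nabla(a,b)\|_{H^1}^2$ on the right of \eqref{G3.35}; (ii) the factor $\rho$ (or $u$) in every genuinely nonlinear term supplies a $\|\rho\|_{H^2}\le\delta$ (or $\|u\|_{H^2}\le\delta$) prefactor by $H^2\hookrightarrow L^\infty$, so those terms are bounded by $C\delta$ times the full velocity-weighted $H^2_{x,v}$ energy of $\{\mathbf I-\mathbf P\}f$ and hence absorbed into the left-hand dissipation for $\delta$ small; (iii) terms with $\nabla_v\mathbf P f$ reduce to $x$-derivatives of $(a,b)$ times $\sqrt M$-weights, again feeding $\|\nabla(a,b)\|_{H^1}^2$. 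Summing over all admissible $(\alpha,\beta)$ with $1\le|\beta|\le 2$, $|\alpha|+|\beta|\le 2$, and choosing the constants $C_k$ in decreasing order ($C_2\ll C_1$) so that the cross terms with fewer $v$-derivatives coming from the $v\cdot\nabla$ commutator are controlled, yields \eqref{G3.35}.

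I expect the main obstacle to be the bookkeeping of the commutator $[\partial_v^\beta, v\cdot\nabla_x]\{\mathbf I-\mathbf P\}f$ and its interaction with the velocity-weighted dissipation: this term trades a $v$-derivative for an $x$-derivative, so it cannot be absorbed directly into the $|\beta|$-level dissipation and must instead be controlled using the $(|\beta|-1)$-level norm — which is only available in the dissipation (not as an absolute energy), forcing the induction on $|\beta|$ and the careful choice of the hierarchy of constants $C_k$. A secondary technical point is that the weight $\nu(v)=1+|v|^2$ interacts with the extra factors $v$ appearing in $\tfrac12 u\cdot v\{\mathbf I-\mathbf P\}f$ after $v$-differentiation; here one uses that each $v$-differentiation of the weighted quantity is still controlled by $|\cdot|_\nu$-norms of lower $v$-order terms, plus the elementary bound $|v|\,e^{-|v|^2/8}\le C$ to handle the $v\sqrt M$ contributions from the $\mathbf P f$ pieces. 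None of these raises a genuine new difficulty beyond what appears in Lemmas \ref{L3.1}--\ref{L3.4}; it is the organizational structure (induction on $|\beta|$ with matched constants) that is the heart of the argument.
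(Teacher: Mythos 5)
Your proposal follows essentially the same route as the paper: an energy estimate on $\partial_x^\alpha\partial_v^\beta$ of the micro equation \eqref{G3.34}, with the commutators $[\partial_v^\beta,\mathcal L]$ and $[\partial_v^\beta,v\cdot\nabla_x]$ producing lower-$|\beta|$/higher-$x$ terms absorbed through the hierarchy of constants $C_k$, the nonlinear terms handled by $\delta$-smallness, and the $\mathbf Pf$ contributions feeding $\|\nabla(a,b)\|_{H^1}^2$. The only cosmetic differences are that the paper does not phrase the absorption as an induction (it just sums the $k$-level estimates with $C_k$ suitably chosen) and that the coercivity is obtained from the $\mathbf P_0$-version of \eqref{G2.3} applied to $\partial_x^\alpha\partial_v^\beta\{\mathbf I-\mathbf P\}f$ (which is no longer micro), with the resulting $\mathbf P_0$-correction absorbed by the term $\sum_{|\alpha|\le2}\|\partial^\alpha\{\mathbf I-\mathbf P\}f\|_\nu^2$ on the right-hand side.
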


\begin{proof}

In our proof we shall apply  similar  method used in \cite[Lemma 4.3]{DFT-2010-CMP}.
We first fix $k$ with $1\leq k\leq 2$, 
then choose $\alpha,\beta$ satisfying $|\beta|=k$ and $|\alpha|+|\beta|\leq 2$. 
Multiplying \eqref{G3.34} by $\partial^\alpha_x\partial^\beta_v\{\mathbf{I}-\mathbf{P}\}f$ and taking integration for the resulting identity lead to
\begin{align}\label{G3.36}
&\frac{1}{2}\frac{{\rm d}}{{\rm d}t} \|\partial^\alpha_x\partial^\beta_v \{\mathbf{I}-\mathbf{P}\}f\|^2+\int_{\mathbb{R}^3}\langle -\mathcal{L}\partial^\alpha_x\partial^\beta_v\{\mathbf{I}-\mathbf{P}\}f,\partial^\alpha_x\partial^\beta_v\{\mathbf{I}-\mathbf{P}\}f\rangle\mathrm{d}x\nonumber\\
=\,&\int_{\mathbb{R}^3}\langle\partial_{x}^{\alpha}[\partial_v^\beta,-|v|^2]\{\mathbf{I}-\mathbf{P}\}f,
\partial_{x}^{\alpha}\partial_{v}^{\beta}\{\mathbf{I}-\mathbf{P}\}f\rangle{\rm d}x\nonumber\\
&-\int_{\mathbb{R}^3}\langle\partial_{x}^{\alpha}\partial_{v}^{\beta}(v\cdot\nabla\{\mathbf{I}-\mathbf{P}\}f),
\partial_{x}^{\alpha}\partial_{v}^{\beta}\{\mathbf{I}-\mathbf{P}\}f\rangle{\rm d}x\nonumber\\
&-\int_{\mathbb{R}^3}\big\langle\partial_{x}^{\alpha}\partial_{v}^{\beta}\big((1+\rho)u\cdot\nabla_{v}\{\mathbf{I}-\mathbf{P}\}f\big),
\partial_{x}^{\alpha}\partial_{v}^{\beta}\{\mathbf{I}-\mathbf{P}\}f\big\rangle{\rm d}x \nonumber\\
&+\frac{1}{2}\int_{\mathbb{R}^3}\big\langle\partial_{x}^{\alpha}\partial_{v}^{\beta}\big((1+\rho)u\cdot v\{\mathbf{I}-\mathbf{P}\}f\big),
\partial_{x}^{\alpha}\partial_{v}^{\beta}\{\mathbf{I}-\mathbf{P}\}f\big\rangle{\rm d}x \nonumber\\
&+\int_{\mathbb{R}^3}\big\langle\partial_{x}^{\alpha}\partial_{v}^{\beta}(\rho\mathcal{L}\{\mathbf{I}-\mathbf{P}\}f),
\partial_{x}^{\alpha}\partial_{v}^{\beta}\{\mathbf{I}-\mathbf{P}\}f\big\rangle{\rm d}x \nonumber\\
&+\int_{\mathbb{R}^3}\bigg\langle\partial_{x}^{\alpha}\partial_{v}^{\beta}\mathbf{P}
\Big(v\cdot\nabla\{\mathbf{I}-\mathbf{P}\}f
+(1+\rho)u\cdot\nabla_{v}\{\mathbf{I}-\mathbf{P}\}f
\Big),
\partial_{x}^{\alpha}\partial_{v}^{\beta}\{\mathbf{I}-\mathbf{P}\}f\bigg\rangle{\rm d}x \nonumber\\
&-\frac{1}{2}\int_{\mathbb{R}^3}\bigg\langle\partial_{x}^{\alpha}\partial_{v}^{\beta}\mathbf{P}
\Big((1+\rho)u\cdot v\{\mathbf{I}-\mathbf{P}\}f\Big),
\partial_{x}^{\alpha}\partial_{v}^{\beta}\{\mathbf{I}-\mathbf{P}\}f\bigg\rangle{\rm d}x \nonumber\\
&-\int_{\mathbb{R}^3}\bigg\langle\partial_{x}^{\alpha}\partial_{v}^{\beta}\{\mathbf{I}-\mathbf{P}\}
\Big(v\cdot\nabla \mathbf{P}f+(1+\rho)u\cdot\nabla_{v}\mathbf{P}f-\frac{1}{2}(1+\rho)u\cdot v\mathbf{P}f\Big),
\partial_{x}^{\alpha}\partial_{v}^{\beta}\{\mathbf{I}-\mathbf{P}\}f\bigg\rangle{\rm d}x\nonumber\\
\equiv:\,&\sum_{j=25}^{32}I_j,
\end{align}
where we used the fact $[\partial^\beta_v,\mathcal{L}]=[\partial^\beta_v,-|v|^2]$.

For term $I_{25}$, it follows from Young's inequality and Lemmas
\ref{LA.1}--\ref{LLA.3} that
\begin{align}\label{G3.37}
I_{25}\leq&\, \frac{1}{6}\|\partial^\alpha_x\partial^\beta_v\{\mathbf{I}-\mathbf{P}\}f\|^2+C\|[\partial^\beta_v,-|v|^2]\partial^\alpha_x\{\mathbf{I}-\mathbf{P}\}f\|^2 \nonumber\\
\leq&\, \frac{1}{6}\|\partial^\alpha_x\partial^\beta_v\{\mathbf{I}-\mathbf{P}\}f\|^2
+C\sum_{|\alpha^\prime|\leq 2-k}\|\partial^{\alpha^{\prime}}\{\mathbf{I}-\mathbf{P}\}f\|_{\nu}^2\nonumber\\
&\,+C\chi_{\{k=2\}}\sum_{\substack{1\leq|\beta^{\prime}|\leq k-1\\|\alpha^{\prime}+|\beta^{\prime}|\leq 2}}\|\partial^{\alpha^{\prime}}_x\partial^{\beta^{\prime}}_v\{\mathbf{I}-\mathbf{P}\}f\|_{\nu}^2.
\end{align}
Here and below, $\chi_D$ denotes the characteristic
function of a set $D$.

Similarly, we have
\begin{align}\label{G3.38}
I_{26}=\,&-\int_{\mathbb{R}^3}\langle\partial_{x}^{\alpha}[\partial^\beta_v,v\cdot\nabla]\{\mathbf{I}-\mathbf{P}\}f,
\partial_{x}^{\alpha}\partial_{v}^{\beta}\{\mathbf{I}-\mathbf{P}\}f\rangle{\rm d}x\nonumber\\   
\leq\,& \frac{1}{6}\|\partial^\alpha_x\partial^\beta_v\{\mathbf{I}-\mathbf{P}\}f\|^2
+C\sum_{|\alpha^\prime|\leq 2-k}\|\nabla\partial^{\alpha^{\prime}}\{\mathbf{I}-\mathbf{P}\}f\|^2\nonumber\\
&+C\chi_{\{k=2\}}\sum_{\substack{1\leq|\beta^{\prime}|\leq k-1\\|\alpha^{\prime}+|\beta^{\prime}|\leq 2}}\|\partial^{\alpha^{\prime}}_x\partial^{\beta^{\prime}}_v\{\mathbf{I}-\mathbf{P}\}f\|^2,\nonumber\\
I_{27}\leq\,& C\sum_{|\alpha^\prime|<|\alpha|}\int_{\mathbb{R}^3\times{\mathbb{R}^3}}\big|\partial^{\alpha-\alpha^{\prime}}\big((1+\rho)u\big)\big||\nabla_v\partial_x^{\alpha^{\prime}}\partial^\beta_v\{\mathbf{I}-\mathbf{P}\}f|^2|\partial_x^{\alpha}\partial^\beta_v\{\mathbf{I}-\mathbf{P}\}f|^2\mathrm{d}x\mathrm{d}v\nonumber\\
\leq\,&C(1+\|\rho\|_{H^2})\|u\|_{H^1}\sum_{\substack{1\leq|\beta^{\prime}|\leq 2\\|\alpha^{\prime}+|\beta^{\prime}|\leq 2}}\|\partial^{\alpha^{\prime}}_x\partial^{\beta^{\prime}}_v\{\mathbf{I}-\mathbf{P}\}f\|^2,\nonumber\\
\leq\,&C\delta\sum_{\substack{1\leq|\beta^{\prime}|\leq 2\\|\alpha^{\prime}+|\beta^{\prime}|\leq 2}}\|\partial^{\alpha^{\prime}}_x\partial^{\beta^{\prime}}_v\{\mathbf{I}-\mathbf{P}\}f\|^2,\nonumber\\
I_{28}\leq\,& C\sum_{|\alpha^\prime|<|\alpha|}\int_{\mathbb{R}^3\times{\mathbb{R}^3}}\big|\partial^{\alpha-\alpha^{\prime}}\big((1+\rho)u\big)\big||\partial_x^{\alpha^{\prime}}\partial^\beta_v (v\{\mathbf{I}-\mathbf{P}\}f)|^2|\partial_x^{\alpha}\partial^\beta_v\{\mathbf{I}-\mathbf{P}\}f|^2\mathrm{d}x  {\mathrm{d}v} \nonumber\\
&+C(1+\|\rho\|_{L^{\infty}})\|u\|_{L^{\infty}}\sum_{\substack{1\leq|\beta^{\prime}|\leq 2\\|\alpha^{\prime}+|\beta^{\prime}|\leq 2}}\|\partial^{\alpha^{\prime}}_x\partial^{\beta^{\prime}}_v\{\mathbf{I}-\mathbf{P}\}f\|_{\nu}^2      \nonumber\\
\leq\,&C(1+\|\rho\|_{H^2})\|u\|_{H^2}\sum_{\substack{1\leq|\beta^{\prime}|\leq 2\\|\alpha^{\prime}+|\beta^{\prime}|\leq 2}}\|\partial^{\alpha^{\prime}}_x\partial^{\beta^{\prime}}_v\{\mathbf{I}-\mathbf{P}\}f\|_{\nu}^2\nonumber\\
\leq\,&C\delta\sum_{\substack{1\leq|\beta^{\prime}|\leq 2\\|\alpha^{\prime}+|\beta^{\prime}|\leq 2}}\|\partial^{\alpha^{\prime}}_x\partial^{\beta^{\prime}}_v\{\mathbf{I}-\mathbf{P}\}f\|_{\nu}^2.
\end{align}
For term $I_{29}$, using $[\partial^\beta_v,\mathcal{L}]=[\partial^\beta_v,-|v|^2]$ again, we derive
\begin{align}\label{G3.39}
I_{29}=\,&
\int_{\mathbb{R}^3}\big\langle\partial_{x}^{\alpha}(\rho\mathcal{L}\partial_{v}^{\beta}\{\mathbf{I}-\mathbf{P}\}f),
\partial_{x}^{\alpha}\partial_{v}^{\beta}\{\mathbf{I}-\mathbf{P}\}f\big\rangle{\rm d}x \nonumber\\
&+\int_{\mathbb{R}^3}\big\langle\partial_{x}^{\alpha}(\rho[\partial^\beta_v,-|v|^2]\{\mathbf{I}-\mathbf{P}\}f),
\partial_{x}^{\alpha}\partial_{v}^{\beta}\{\mathbf{I}-\mathbf{P}\}f\big\rangle{\rm d}x \nonumber\\
=\,&\int_{\mathbb{R}^3}\langle \rho\mathcal{L}\partial^\alpha_x\partial^\beta_v\{\mathbf{I}-\mathbf{P}\}f,\partial^\alpha_x\partial^\beta_v\{\mathbf{I}-\mathbf{P}\}f\rangle\mathrm{d}x\nonumber\\
&+\sum_{|\alpha^{\prime}|<|\alpha|}\int_{\mathbb{R}^3}\langle \partial^{\alpha-\alpha^{\prime}}\rho\mathcal{L}\partial^{\alpha^{\prime}}_x\partial^\beta_v\{\mathbf{I}-\mathbf{P}\}f,\partial^\alpha_x\partial^\beta_v\{\mathbf{I}-\mathbf{P}\}f\rangle\mathrm{d}x\nonumber\\
&+\int_{\mathbb{R}^3}\big\langle\partial_{x}^{\alpha}(\rho[\partial^\beta_v,-|v|^2]\{\mathbf{I}-\mathbf{P}\}f),
\partial_{x}^{\alpha}\partial_{v}^{\beta}\{\mathbf{I}-\mathbf{P}\}f\big\rangle{\rm d}x \nonumber\\
\equiv:\,&J_1+J_2+J_3.
\end{align}
By a direct calculation, one obtains
\begin{align}\label{G3.40}
J_2=\,&-\sum_{|\alpha^\prime|<|\alpha|}\int_{\mathbb{R}^3}\bigg\langle\partial^{\alpha-\alpha^{\prime}}\rho \sqrt{M}\nabla_v\Big(\frac{\partial^{\alpha^{\prime}}_x\partial_v^\beta \{\mathbf{I}-\mathbf{P}\}f}{\sqrt{M}}\Big), \nabla_v\Big(\frac{\partial^{\alpha}_x\partial_v^\beta \{\mathbf{I}-\mathbf{P}\}f}{\sqrt{M}}\Big)\bigg\rangle  \mathrm{d}x\nonumber\\
\leq\,&C(\|\rho\|_{L^3}+\|\nabla\rho\|_{L^3})\|\partial^{\alpha^{\prime}+1}_x\partial_v^\beta \{\mathbf{I}-\mathbf{P}\}f\|_{\nu}\|\partial^{\alpha}_x\partial_v^\beta \{\mathbf{I}-\mathbf{P}\}f\|_{\nu}\nonumber\\
\leq\,&C\|\rho\|_{H^2}\sum_{\substack{1\leq|\beta^{\prime}|\leq 2\\|\alpha^{\prime}+|\beta^{\prime}|\leq 2}}\|\partial^{\alpha^{\prime}}_x\partial^{\beta^{\prime}}_v\{\mathbf{I}-\mathbf{P}\}f\|_{\nu}^2\nonumber\\
\leq\,&C\delta\sum_{\substack{1\leq|\beta^{\prime}|\leq 2\\|\alpha^{\prime}+|\beta^{\prime}|\leq 2}}\|\partial^{\alpha^{\prime}}_x\partial^{\beta^{\prime}}_v\{\mathbf{I}-\mathbf{P}\}f\|_{\nu}^2,\nonumber\\
J_3\leq\,& \frac{1}{6}\|\partial^\alpha_x\partial^\beta_v\{\mathbf{I}-\mathbf{P}\}f\|^2+C\|[\partial^\beta_v,-|v|^2]\partial^\alpha_x(\rho\{\mathbf{I}-\mathbf{P}\}f)\|^2 \nonumber\\
\leq\,& \frac{1}{6}\|\partial^\alpha_x\partial^\beta_v\{\mathbf{I}-\mathbf{P}\}f\|^2+C\|\rho\|_{H^2}^2\sum_{|\alpha^{\prime}|\leq 2}\|\partial^{\alpha^{\prime}}\{\mathbf{I}-\mathbf{P}\}f\|^2\nonumber\\
&+C\|\rho\|_{H^2}^2\chi_{\{k=2\}}\sum_{\substack{1\leq|\beta^{\prime}|\leq k-1\\|\alpha^{\prime}+|\beta^{\prime}|\leq 2}}\|\partial^{\alpha^{\prime}}_x\partial^{\beta^{\prime}}_v\{\mathbf{I}-\mathbf{P}\}f\|_{\nu}^2\nonumber\\
\leq\,& \frac{1}{6}\|\partial^\alpha_x\partial^\beta_v\{\mathbf{I}-\mathbf{P}\}f\|^2+C\delta\sum_{|\alpha^{\prime}|\leq 2}\|\partial^{\alpha^{\prime}}\{\mathbf{I}-\mathbf{P}\}f\|^2\nonumber\\
&+C\delta\chi_{\{k=2\}}\sum_{\substack{1\leq|\beta^{\prime}|\leq k-1\\|\alpha^{\prime}+|\beta^{\prime}|\leq 2}}\|\partial^{\alpha^{\prime}}_x\partial^{\beta^{\prime}}_v\{\mathbf{I}-\mathbf{P}\}f\|_{\nu}^2.
\end{align}
Putting \eqref{G3.40} into \eqref{G3.39} ensures
\begin{align}\label{G3.41}
I_{29}\leq\,& J_1+ \frac{1}{6}\|\partial^\alpha_x\partial^\beta_v\{\mathbf{I}-\mathbf{P}\}f\|^2+C\delta\sum_{|\alpha^{\prime}|\leq 2}\|\partial^{\alpha^{\prime}}\{\mathbf{I}-\mathbf{P}\}f\|^2 \nonumber\\
&+C\delta\chi_{\{k=2\}}\sum_{\substack{1\leq|\beta^{\prime}|\leq k-1\\|\alpha^{\prime}+|\beta^{\prime}|\leq 2}}\|\partial^{\alpha^{\prime}}_x\partial^{\beta^{\prime}}_v\{\mathbf{I}-\mathbf{P}\}f\|_{\nu}^2+C\delta\sum_{\substack{1\leq|\beta^{\prime}|\leq 2\\|\alpha^{\prime}+|\beta^{\prime}|\leq 2}}\|\partial^{\alpha^{\prime}}_x\partial^{\beta^{\prime}}_v\{\mathbf{I}-\mathbf{P}\}f\|_{\nu}^2.
\end{align}
For $I_{30}$ to $I_{32}$, from \eqref{G3.1}, Lemmas \ref{LA.1}--\ref{LA.2}
and Young's inequality, it yields 
\begin{align}\label{G3.42}
I_{30}+I_{31}\leq\,& \frac{1}{6}\|\partial^\alpha_x\partial^\beta_v\{\mathbf{I}-\mathbf{P}\}f\|^2+C\|\partial^\alpha_x\partial^\beta_v\mathbf{P}(v\cdot\nabla\{\mathbf{I}-\mathbf{P}\}f)\|^2\nonumber\\
&+C\|\partial^\alpha_x\partial^\beta_v\mathbf{P}\big((1+\rho)u\cdot\nabla_v\{\mathbf{I}-\mathbf{P}\}f\big)\|^2+C\|\partial^\alpha_x\partial^\beta_v\mathbf{P}\big((1+\rho)u\cdot v\{\mathbf{I}-\mathbf{P}\}f\big)\|^2 \nonumber\\
\leq\,& \frac{1}{6}\|\partial^\alpha_x\partial^\beta_v\{\mathbf{I}-\mathbf{P}\}f\|^2+C\sum_{|\alpha^\prime|\leq 2-k}\|\nabla\partial^{\alpha^{\prime}}\{\mathbf{I}-\mathbf{P}\}f\|^2\nonumber\\
&+C(1+\|\rho\|_{H^2})^2\|u\|_{H^2}^2\sum_{|\alpha^\prime|\leq 2-k}\|\partial^{\alpha^\prime}\{\mathbf{I}-\mathbf{P}\}f\|^2 \nonumber\\
\leq\,&\frac{1}{6}\|\partial^\alpha_x\partial^\beta_v\{\mathbf{I}-\mathbf{P}\}f\|^2+C\sum_{|\alpha^{\prime}|\leq 2}\|\partial^{\alpha^{\prime}}\{\mathbf{I}-\mathbf{P}\}f\|^2,\nonumber\\
I_{32}\leq\,&\frac{1}{6}\|\partial^\alpha_x\partial^\beta_v\{\mathbf{I}-\mathbf{P}\}f\|^2+C\|\partial_{x}^{\alpha}\partial_{v}^{\beta}\{\mathbf{I}-\mathbf{P}\} (v\cdot\nabla \mathbf{P}f)\|^2\nonumber\\
&+C\|\partial_{x}^{\alpha}\partial_{v}^{\beta}\{\mathbf{I}-\mathbf{P}\} \big((1+\rho)u\cdot\nabla \mathbf{P}f\big)\|^2+C\|\partial_{x}^{\alpha}\partial_{v}^{\beta}\{\mathbf{I}-\mathbf{P}\} \big((1+\rho)u\cdot v \mathbf{P}f\big)\|^2\nonumber\\
\leq\,&\frac{1}{6}\|\partial^\alpha_x\partial^\beta_v\{\mathbf{I}-\mathbf{P}\}f\|^2
+C\|\nabla (a,b)\|_{H^1}^2+(1+\|\rho\|_{H^2})^2\|u\|_{H^2}^2\|\nabla(a,b)\|_{H^1}^2\nonumber\\
\leq\,& \frac{1}{6}\|\partial^\alpha_x\partial^\beta_v\{\mathbf{I}-\mathbf{P}\}f\|^2
+C\|\nabla (a,b)\|_{H^1}^2.
\end{align}
With {the} help of \eqref{G2.3}, we get
\begin{align}\label{G3.43}
&\int_{\mathbb{R}^3}\langle -\mathcal{L}\partial^\alpha_x\partial^\beta_v\{\mathbf{I}-\mathbf{P}\}f,\partial^\alpha_x\partial^\beta_v\{\mathbf{I}-\mathbf{P}\}f\rangle\mathrm{d}x-J_1 \nonumber\\   
=\,&\int_{\mathbb{R}^3}(1+\rho)\langle -\mathcal{L}\partial^\alpha_x\partial^\beta_v\{\mathbf{I}-\mathbf{P}\}f,\partial^\alpha_x\partial^\beta_v\{\mathbf{I}-\mathbf{P}\}f\rangle\mathrm{d}x\nonumber\\
\geq\,&\lambda_0\|\{\mathbf{I}-\mathbf{P}_0\}\partial^\alpha_x\partial^\beta_v\{\mathbf{I}-\mathbf{P}\}f\|_{\nu}^2\nonumber\\
\geq\,&\frac{\lambda_0}{2}\|\partial^\alpha_x\partial^\beta_v\{\mathbf{I}-\mathbf{P}\}f\|_{\nu}^2-\lambda_0\|\mathbf{P}_0\partial^\alpha_x\partial^\beta_v\{\mathbf{I}-\mathbf{P}\}f\|_{\nu}^2\nonumber\\
\geq\,&\frac{\lambda_0}{2}\|\partial^\alpha_x\partial^\beta_v\{\mathbf{I}-\mathbf{P}\}f\|_{\nu}^2-C\|\partial^\alpha_x\{\mathbf{I}-\mathbf{P}\}f\|^2.
\end{align}
Putting all the above estimates \eqref{G3.36}--\eqref{G3.43} into
\eqref{G3.35} and taking summation over $|\beta|=k$ with $1\leq k\leq 2$, one has
\begin{align}\label{G3.44}
&\frac{1}{2}\frac{\rm d}{{\rm d}t}
\sum_{\substack{ |\beta|=k \\  |\alpha|+|\beta|\leq 2}}
\|\partial^{\alpha}_{x}\partial^{\beta}_{v}\{\mathbf{I}-\mathbf{P}\}f\|^{2}+\lambda_6
\sum\limits_{\substack{|\beta|=k \\  
 |\alpha|+|\beta|\leq 2}}\|\partial^{\alpha}_{x}\partial^{\beta}_{v}\{\mathbf{I}-\mathbf{P}\}f\|_{\nu}^{2}\nonumber\\
\leq\,& C\chi_{\{k=2\}}\sum_{\substack{1\leq|\beta^{\prime}|\leq k-1 \nonumber\\ 
 |\alpha^{\prime}|+|\beta^{\prime
 }|\leq 2}}\|\partial^{\alpha^{\prime}}_{x}\partial^{\beta^{\prime}}_{v}
\{\mathbf{I}-\mathbf{P}\}f\|_{\nu}^{2}
+C\|\nabla(a,b)\|_{H^1}^2\nonumber\\
&+C\sum_{|\alpha| \leq 2}\left\|\partial^\alpha\{\mathbf{I}-\mathbf{P}\} f\right\|_\nu^2+C\delta\sum_{\substack{1\leq|\beta^{\prime}|\leq 2\\|\alpha^{\prime}+|\beta^{\prime}|\leq 2}}\|\partial^{\alpha^{\prime}}_x\partial^{\beta^{\prime}}_v\{\mathbf{I}-\mathbf{P}\}f\|_{\nu}^2,    
\end{align}
for some $\lambda_6>0$.
Then, by choosing some suitable constants $C_k$ in \eqref{G3.44}, we
consequently get \eqref{G3.35}.
\end{proof}

\subsection{Proof of Theorem \ref{T1.1}}
 With Lemmas \ref{L3.1}--\ref{L3.5} in hand, we continue to establish the uniform-in-time a priori estimates of solution $(f,\rho,u)$ to \eqref{I-3}--\eqref{I-6}.

Now we define a temporal energy functional $\mathcal{E} (t)$ and 
the corresponding dissipation rate $\mathcal{D}(t)$ as follows:
\begin{align}\label{G3.45}
\mathcal{E}(t):=\,&\|(f,\rho,u)\|_{\mathcal{H}^2}
+\tau_{1}\mathcal{E}_0(t)+\tau_2\sum_{|\alpha|\leq 1}\int_{\mathbb{R}^3}\partial^\alpha u\cdot\partial^\alpha\nabla\rho\mathrm{d}x
\nonumber\\
&+\tau_{3}\sum_{1\leq k\leq2}C_k\sum_{\substack{|\beta|=k \\
|\alpha|+|\beta| \leq 2}}\|\partial^\alpha_x\partial^\beta_v\{\mathbf{I}-\mathbf{P}\} f\|^2,\nonumber\\
\mathcal{D}(t):=\,&\|b-u\|_{H^2}^2+\|\nabla(a,b,\rho)\|_{H^1}^2+\|\nabla u\|_{H^{2}}^{2}\nonumber\\
&+\sum_{|\alpha|\leq2}\|\partial^\alpha\{\mathbf{I}-\mathbf{P}\}f\|_{\nu}^2
+\sum_{|\alpha|+|\beta|\leq2}\|\partial_{x}^{\alpha}\partial_{v}^{\beta}\{\mathbf{I}-\mathbf{P}\}f\|_{\nu}^{2},
\end{align}
where $0<\tau_1, \tau_2,\tau_3\ll 1$ with $\tau_3\ll\tau_1$ are sufficiently small constants.
Collecting   \eqref{G3.2}, \eqref{G3.10}, $\tau_1\times$\eqref{G3.19},
$\tau_2\times$\eqref{G3.30} and $\tau_3\times$\eqref{G3.35} together gives rise to
\begin{align}\label{G3.46}
\frac{{\rm d}}{{\rm d}t}\mathcal{E}(t)+\lambda_7\mathcal{D}(t)\leq 0,    
\end{align}
for some constants $\lambda_7$ satisfying
$0<\lambda_7\leq\min\{\lambda_1,\lambda_2,\tau_1\lambda_3,\tau_2\lambda_4,\tau_3\lambda_5\}$.
Thus, we have  
\begin{align}\label{G3.47}
\mathcal{E}(t)+\lambda_7\int_0^t\mathcal{D}(s)ds\leq\mathcal{E}(0),    
\end{align}
for any $0\leq t\leq T$.
In addition, \eqref{G3.1} can be justified by selecting 
\begin{align*}
\mathcal{E}(0)\backsim \|f_0\|_{H^2_{x,v}}^2+\|(\rho_0,u_0)\|_{H^2}^2     
\end{align*}
sufficiently small.

The existence and uniqueness of the local solution for the problem \eqref{I-3}--\eqref{I-6} can be established by applying  the  linearization method and the Banach contraction mapping principle (cf. \cite{Gy-CPAM-2004,CDM-krm-2011,Li-Ni-Wu}). 
For the sake of brevity, we omit the  details here.
By combining the continuity argument with the local existence of solutions 
and the uniform a priori estimates \eqref{G3.47}, we can deduce the global existence of strong solutions $(f,\rho,u)$ to \eqref{I-3}--\eqref{I-6}. Additionally, invoking the maximum principle,
we conclude that $F=M+\sqrt{M}f \geq 0$.
This completes the proof of Theorem \ref{T1.1}. \hfill $\square$
 
\smallskip
\section{Time-decay rates of   strong solutions}
In this section, we establish the time decay rates of $(f,\rho,u)$ to the problem
\eqref{I-3}--\eqref{I-6} in whole space $\mathbb{R}^3$. For this purpose,
we first study the linearized equations of \eqref{I-3}--\eqref{I-5} which can be expressed as
\begin{equation}\label{G4.1}
\left\{\begin{aligned}
&\partial_{t}f+v\cdot\nabla_x f-u\cdot v\sqrt{M}=\mathcal{L}f+S_f,  \\
&\partial_{t} \rho+{\rm div}u=0,  \\
&\partial_t u+P^{\prime}(1)\nabla\rho-\mu\Delta u+u-b=0,  
\end{aligned}\right.
\end{equation}
with the initial data:
\begin{equation}\label{G4.2}
(f,\rho,u)|_{t=0}=(f_0,\rho_0,u_0)=(f(0,x,v),\rho(0,x),u(0,x)), \quad (x, v)\in\mathbb{R}^{3}\times \mathbb{R}^3.
\end{equation}
Here, $S_f$ is the nonhomogeneous source given by
\begin{align*}
S_f={\rm div}_vG-\frac{1}{2}v\cdot G+h,
\end{align*}
with $G=G(t,x,v)\in \mathbb{R}^3$ and $h=h(t,x,v)\in \mathbb{R}$ satisfying
\begin{align*}
\mathbf{P}_0 G=0,\quad  \mathbf{P}h=0. 
\end{align*}

For presentation simplicity,  we use $U(t) = (f(t), \rho(t), u(t))$ to denote   the solution of the problem \eqref{G4.1}--\eqref{G4.2}, and $U_0$ for the initial data, i.e.  $U_0=(f_0,\rho_0,u_0)$. 
By Duhamel's principle, we arrive at
\begin{align}\label{G4.4}
U(t)=\mathbb{A}(t)U_0+\int_0^t\mathbb{A}(t-s)(S_f(s),0,0)\mathrm{d}s,
\end{align}
where $\mathbb{A}(t)$ is the solution operator corresponding to
$U(t)$ while $S_f=0$.

Next, we state the main theorem on the  time decay rate of the problem
\eqref{G4.1}--\eqref{G4.2}.
\begin{thm}\label{T4.1}
Let $1\leq q\leq2$. For all $\alpha$, $\alpha^{\prime}$ with 
$\alpha^{\prime}\leq\alpha$ and $m=|\alpha-\alpha^{\prime}|$, we have
 \begin{align}
\|\partial^{\alpha}\mathbb{A}(t)U_{0}\|_{\mathcal{Z}_{2}}
\leq  & C(1+t)^{-\frac{3}{2} (\frac{1}{q}-\frac{1}{2} )-\frac{m}{2}}
\Big(\|\partial^{\alpha^{\prime}}U_{0}\|_{\mathcal{Z}_{q}}+\|\partial^{\alpha}U_{0}\|_{\mathcal{Z}_{2}}\big),
\label{G4.5} \\
\bigg\|\partial^{\alpha}\int_{0}^{t}\mathbb{A}(t-s)(S_f(s),0,0){\rm d}s\bigg\|_{\mathcal{Z}_{2}}^{2}
\leq  &  C\int_0^t(1+t-s)^{-3 (\frac{1}{q}-\frac{1}{2})-m}\nonumber\\
 &   \times \Big(\Big\|\partial^{\alpha^{\prime}}\big(G(s),\nu^{-\frac{1}{2}}h(s)\big)\Big\|_{\mathcal{Z}_{q}}^{2}
+\Big\|\partial^{\alpha}\big(G(s),\nu^{-\frac{1}{2}}h(s)\big)\Big\|_{\mathcal{Z}_{2}}^{2}\Big){\rm d}s,\label{G4.6}
\end{align}
for any $t\geq 0$.
\end{thm}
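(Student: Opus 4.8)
The plan is to analyze the linearized system \eqref{G4.1} via Fourier transform in $x$ and a careful spectral decomposition in the velocity variable, following the now-standard approach for Vlasov-Fokker-Planck-type systems coupled to fluid equations (as in \cite{CDM-krm-2011,DFT-2010-CMP,Ww-CMS-2024}). After taking $\widehat{\cdot}$ in $x$, the system decouples into a family of ODEs in $(t,v)$ indexed by the frequency $k\in\mathbb{R}^3$. First I would establish pointwise-in-$k$ energy estimates: constructing a Lyapunov functional $\mathcal{E}(t,k)\backsim |\widehat f|_{L^2_v}^2+|\widehat\rho|^2+|\widehat u|^2$ augmented by suitable interaction terms (the Fourier analogue of $\mathcal{E}_0$ in Section 3, i.e.\ terms like $\mathrm{Re}\,(\widehat b | \widehat\Gamma(\{\mathbf I-\mathbf P\}\widehat f))$, $\mathrm{Re}\,(i k \widehat\rho | \widehat u)$, and mixed $v$-derivative corrections), one shows
\begin{align*}
\frac{\mathrm{d}}{\mathrm{d}t}\mathcal{E}(t,k)+\frac{\lambda|k|^2}{1+|k|^2}\mathcal{E}(t,k)\leq 0,
\end{align*}
which yields the frequency-by-frequency decay $\mathcal{E}(t,k)\lesssim e^{-\lambda\rho(k)t}\mathcal{E}(0,k)$ with $\rho(k)=|k|^2/(1+|k|^2)$. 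This is the structural heart of the argument and the place where the Fokker-Planck dissipation \eqref{G2.3}, the coupling $u-b$, and the pressure/viscosity terms must all be balanced.

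Second I would convert the pointwise bound into the $\mathcal{Z}_2$ estimate \eqref{G4.5}. By Plancherel, $\|\partial^\alpha \mathbb{A}(t)U_0\|_{\mathcal{Z}_2}^2 \backsim \int_{\mathbb{R}^3}|k|^{2|\alpha|}\mathcal{E}(t,k)\,\mathrm{d}k$, so it suffices to split into low frequencies $|k|\leq 1$ and high frequencies $|k|\geq 1$. On high frequencies $\rho(k)\geq 1/2$, giving exponential decay, and one keeps $|k|^{2|\alpha|}$ against $\|\partial^\alpha U_0\|_{\mathcal{Z}_2}$. On low frequencies $\rho(k)\backsim|k|^2$; writing $|k|^{2|\alpha|}=|k|^{2|\alpha'|}\cdot|k|^{2m}$ and using $|\widehat{\partial^{\alpha'}U_0}(k)|\leq \|\partial^{\alpha'}U_0\|_{L^1_x}\lesssim\|\partial^{\alpha'}U_0\|_{\mathcal{Z}_q}$ (for $1\leq q\leq 2$, via Hölder/Hausdorff-Young so that actually one bounds $\|\widehat{\partial^{\alpha'}U_0}\|_{L^{q'}_k}$ and interpolates), the standard integral
\begin{align*}
\int_{|k|\leq 1}|k|^{2m}e^{-c|k|^2 t}\,\mathrm{d}k\lesssim (1+t)^{-\frac{3}{2}-m}
\end{align*}
(and its $L^q$-weighted refinement giving the exponent $-\frac{3}{2}(\frac1q-\frac12)-\frac m2$) produces the claimed rate. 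The bookkeeping with the exponent $\frac1q-\frac12$ is routine once the pointwise decay is in hand.

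Third, for the Duhamel term \eqref{G4.6}, I would apply the just-proved \eqref{G4.5} to the inhomogeneous data $(S_f(s),0,0)$ inside the integral. The subtlety is that $S_f=\mathrm{div}_v G-\frac12 v\cdot G+h$ contains a $v$-derivative, so one cannot directly put $S_f$ in $\mathcal{Z}_2$; instead one integrates by parts in $v$ in the Duhamel formula against the test function, exploiting that $\mathbb{A}(t-s)$ acting on $(\mathrm{div}_v G,0,0)$ can be rewritten so the $v$-derivative falls on the (smooth, Gaussian-weighted) kernel, producing at worst a factor controlled by the $\nu$-weighted norm — this is exactly why the right-hand side of \eqref{G4.6} involves $\nu^{-1/2}h$ and $G$ (not $\nu^{-1/2}\mathrm{div}_v G$). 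Concretely, one proves the single-step bound $\|\partial^\alpha\mathbb{A}(\sigma)(S_f,0,0)\|_{\mathcal{Z}_2}\lesssim (1+\sigma)^{-\frac32(\frac1q-\frac12)-\frac m2}(\|\partial^{\alpha'}(G,\nu^{-1/2}h)\|_{\mathcal{Z}_q}+\|\partial^{\alpha}(G,\nu^{-1/2}h)\|_{\mathcal{Z}_2})$ by repeating the low/high frequency analysis with the $v$-integration-by-parts, then squares and integrates in $s$ over $[0,t]$, using Minkowski's inequality in the time variable to move the $L^2_t$ norm inside. The main obstacle I anticipate is handling the $\mathrm{div}_v G$ term cleanly: one must verify that the Fourier-side energy functional, after the $v$-integration by parts, still closes with only the $\nu$-weighted $L^2_v$ norm of $G$ on the right, i.e.\ that no uncontrolled velocity growth appears; the conditions $\mathbf{P}_0 G=0$, $\mathbf{P}h=0$ are precisely what make the coupling terms with $(\widehat\rho,\widehat u)$ vanish or be absorbed, so keeping track of these orthogonality constraints throughout is the delicate point.
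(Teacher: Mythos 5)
Your treatment of \eqref{G4.5} is essentially the paper's argument: Fourier transform in $x$, a Lyapunov functional equivalent to $\|\widehat f\|_{L^2_v}^2+|\widehat\rho|^2+|\widehat u|^2$ built from the basic energy identity \eqref{G4.11} plus the two interaction corrections (the Fourier analogues of $\mathcal{E}_0$ and of $\int\nabla\rho\cdot u$, giving \eqref{G4.12} and \eqref{G4.14}), leading to $\partial_t\mathcal{E}_{\mathcal{F}}+\lambda\frac{|k|^2}{1+|k|^2}\mathcal{E}_{\mathcal{F}}\leq C\big(\|\widehat G\|_{L^2_v}^2+\|\nu^{-\frac12}\widehat h\|_{L^2_v}^2\big)$, followed by the Kawashima-type low/high frequency splitting with Hausdorff--Young and Minkowski in $v$. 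That part is sound.

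The gap is in your step for \eqref{G4.6}. Applying the single-step bound \eqref{G4.5} to $(S_f(s),0,0)$ under the Duhamel integral and then ``squaring and integrating in $s$'' cannot produce the stated inequality, in which the \emph{square} of the norms sits inside the $s$-integral against the kernel with the \emph{full} exponent $-3(\frac1q-\frac12)-m$. Minkowski gives only a bound on $\big(\int_0^t(1+t-s)^{-\frac32(\frac1q-\frac12)-\frac m2}\|\cdot\|\,\mathrm{d}s\big)^2$, and any Cauchy--Schwarz conversion of this leaves at best the half-exponent kernel inside the integral together with a prefactor $\int_0^t(1+t-s)^{-\frac32(\frac1q-\frac12)-\frac m2}\mathrm{d}s$, which is unbounded unless $\frac32(\frac1q-\frac12)+\frac m2>1$; this fails on a large part of the admissible range (e.g. $q=2$, $m=0$, and $q=\frac32$, $m=0$, the latter being exactly a case used later in \eqref{G5.34}). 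Moreover, a slice-wise bound for $(S_f,0,0)$ is itself problematic, since $S_f$ contains $\mathrm{div}_vG$ and $v\cdot G$ and so does not lie in $\mathcal{Z}_2$, and your proposed kernel-level $v$-integration by parts presupposes an explicit representation of $\mathbb{A}(t)$ that is never constructed. The paper avoids both issues by never leaving the Fourier-side energy: the source is absorbed already in \eqref{G4.11} (pairing with $\{\mathbf I-\mathbf P\}\widehat f$, integrating by parts in $v$, and absorbing $\nabla_v\{\mathbf I-\mathbf P\}\widehat f$ into the $\nu$-dissipation, using $\mathbf P_0G=0$ and $\mathbf Ph=0$), so Gronwall gives \eqref{G4.18}, in which the forcing enters \emph{linearly} as $\int_0^te^{-\frac{\lambda_{10}|k|^2}{1+|k|^2}(t-s)}\big(\|\widehat G\|_{L^2_v}^2+\|\nu^{-\frac12}\widehat h\|_{L^2_v}^2\big)\mathrm{d}s$; multiplying by $|k|^{2|\alpha|}$, integrating in $k$, and performing the same frequency splitting then yields \eqref{G4.6} for all $1\leq q\leq2$ and all $m$, with no time-integrability restriction. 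You should rework your third step along these lines.
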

\begin{proof}
Applying Fourier transform to \eqref{G4.1} in $x$, one has
\begin{equation}\label{G4.7}
\left\{\begin{aligned}
&\partial_{t}\widehat{f}+iv\cdot k \widehat{f}-\widehat{u}\cdot v\sqrt{M}=\mathcal{L}\widehat{f}+\nabla_v\cdot\widehat{G}-\frac{1}{2}v\cdot\widehat{G}+\widehat{h},  \\
&\partial_{t}\widehat{\rho}+ik\cdot \widehat{u}=0,  \\
&\partial_t \widehat{u}+ikP^{\prime}(1)\widehat{\rho}+\mu|k|^2 \widehat{u}+\widehat{u}-\widehat{b}=0.  
\end{aligned}\right.
\end{equation}
Multiplying \eqref{G4.7}$_2$--\eqref{G4.7}$_3$ by $\overline{\widehat{\rho}}$, $\overline{\widehat{u}}$ respectively and taking integration, it yields
\begin{align}\label{G4.8}
\frac{1}{2}\partial_t(P^{\prime}(1)|\widehat{\rho}|^2+|\widehat{u}|^2 )+\mu|k|^2|\widehat{u}|^2       
+|\widehat{u}|^2-\mathbf{Re}(\widehat{u}|\widehat{b})=0.
\end{align}
Similarly, multiplying \eqref{G4.7}$_1$ by $\overline{\widehat{f}}$ and
integrating over $\mathbb{R}^3$, it holds
\begin{align*}
&\frac{1}{2}\partial_t \|\widehat{f}\|_{L_v^2}^2+\mathbf{Re}\int_{\mathbb{R}^3} 
(-\mathcal{L}\{\mathbf{I}-\mathbf{P}\}\widehat{f}|\{\mathbf{I}-\mathbf{P}\}\widehat{f} ) {\mathrm{d}v}+|\widehat{b}|^2-\mathbf{Re}(\widehat{u}|\widehat{b})\nonumber\\
 &\quad =\mathbf{Re}\int_{\mathbb{R}^3}\Big(\nabla_v\cdot\widehat{G}-\frac{1}{2}v\cdot\widehat{G}|
\{\mathbf{I}-\mathbf{P}\}\widehat{f}\Big)\mathrm{d}v
+\mathbf{Re}\int_{\mathbb{R}^3}(\widehat{h}|\{\mathbf{I}-\mathbf{P}\}\widehat{f})\mathrm{d}v.
\end{align*}
Here, we have used the following facts:
\begin{align*}
\mathbf{P}h=0,\quad \nabla_v\cdot G-\frac{1}{2}v\cdot G\perp {\rm Rang}\mathbf{P}.
\end{align*}
By making use of \eqref{G2.3}, $\mathbf{P}_0G=0$, integration by parts, Cauchy–Schwarz inequality and
Young's inequality, we further obtain that
\begin{align*}
&\frac{1}{2}\partial_t \|\widehat{f}\|_{L_v^2}^2+\lambda_0 |\{\mathbf{I}-\mathbf{P}\}\widehat{f}|_{\nu}^2+|\widehat{b}|^2-\mathbf{Re}(\widehat{u}|\widehat{b}) \leq C(\|\widehat{G}\|_{L^2_v}^2+\|\nu^{-\frac{1}{2}}\widehat{h}\|_{L^2_v}^2).
\end{align*}
This, together with \eqref{G4.8}, gives rise to
\begin{align}\label{G4.11}
&\frac{1}{2}\partial_t(\|\widehat{f}\|_{L_v^2}^2+P^{\prime}(1)|\widehat{\rho}|^2+|\widehat{u}|^2 )+ \lambda_0 |\{\mathbf{I}-\mathbf{P}\}\widehat{f}|_{\nu}^2+ \mu|k|^2|\widehat{u}|^2 +|\widehat{u}-\widehat{b}|^2 
\leq C (\|\widehat{G}\|_{L^2_v}^2+\|\nu^{-\frac{1}{2}}\widehat{h}\|_{L^2_v}^2).
\end{align}
Using the same method developed in \cite[Theorem 3.1]{CDM-krm-2011}, we easily
get the following estimate of $\widehat{a}$, $\widehat{b}$:
\begin{align}\label{G4.12}
\partial_t \mathbf{Re}\mathcal{E}_1(\widehat{f})+\frac{|\lambda_8|^2|k|^2}{1+|k|^2} (|\widehat{a
}|^2+|\widehat{b}|^2)\leq C(\|\{\mathbf{I}-\mathbf{P}\}\widehat{f}\|_{L_v^2}^2+|\widehat{u}-\widehat{b}|^2+\|G\|_{L_v^2}^2+\|\nu^\frac{1}{2}\widehat{h}\|_{L_v^2}^2),   
\end{align}
for some $\lambda_8>0$, where 
\begin{align*}
\mathcal{E}_1(\widehat{f}):=\frac{1}{1+|k|^2}\sum_{i,j=1}^3\big(ik_i\widehat{b}_j+   ik_j\widehat{b}_i|\Gamma_{ij}(\{\mathbf{I}-\mathbf{P}\}\widehat{f})     \big)-\frac{1}{1+|k|^2}(\widehat{a}|ik\cdot\widehat{b}).   
\end{align*}
In what follows, we exhibit the remaining estimate of $\widehat{\rho}$. To
do this, by multiplying \eqref{G4.7}$_3$ by $\overline{ik\widehat{\rho}}$
and using \eqref{G4.7}$_2$,
one has
\begin{align*}
P^{\prime}(1)|k|^2|\widehat{\rho}|^2 &=(-\partial_t\widehat{u}-|k|^2\widehat{u}-\widehat{u}+\widehat{b}|ik\widehat{\rho})\nonumber\\
 &=-\partial_t(\widehat{u}|ik\widehat{\rho})+(\widehat{u}|ik\partial_t\widehat{\rho})-\mu(|k|^2\widehat{u}|ik\widehat{\rho})-
(\widehat{u}-\widehat{b}|ik\widehat{\rho})\nonumber\\
 &=-\partial_t(\widehat{u}|ik\widehat{\rho})+|k\cdot \widehat{u}|^2-\mu(|k|^2\widehat{u}|ik\widehat{\rho})-
(\widehat{u}-\widehat{b}|ik\widehat{\rho}),
\end{align*}
which, together with Young's inequality, leads to
\begin{align}\label{G4.14}
\partial_t\frac{\mathbf{Re}(\widehat{u}|ik\widehat{\rho})}{1+|k|^2}+\lambda_9P^{\prime}(1)|k|^2|\widehat{\rho}|^2\leq C (|\widehat{u}|^2+|\widehat{u}-\widehat{b}|^2)+C\frac{|k|^4|\widehat{u}|^2}{1+|k|^2},
\end{align}
for some $\lambda_9>0$.

 {Let us} define a functional $\mathcal{E}_{\mathcal{F}}(\widehat{f},\widehat{\rho},\widehat{u})$ by
\begin{align*}
\mathcal{E}_{\mathcal{F}}(\widehat{f},\widehat{\rho},\widehat{u}):=\|f\|_{L^2_v}^2+P^{\prime}(1)|\widehat{n}|^2+|\widehat{u}|^2+\tau_4\mathbf{Re}\mathcal{E}_1(\widehat{f})+\tau_5\frac{\mathbf{Re}(\widehat{u}|ik\widehat{\rho})}{1+|k|^2},
\end{align*}
where $0<\tau_4,\tau_5\ll1$ are small constants. It is obvious that
\begin{align*}
\mathcal{E}_{\mathcal{F}}(\widehat{f},\widehat{\rho},\widehat{u})\backsim \|\widehat{f}\|_{L_v^2}^2+\|\widehat{\rho}\|^2+\|\widehat{u}\|^2,    
\end{align*}
where we have used the facts that 
$$
|\mathcal{E}_1(\widehat{f})|\leq C(\|\widehat{f}\|_{L_v}^2+\|\widehat{u}\|^2),
\quad \Big|\frac{(\widehat{u}|ik\widehat{\rho})}{1+|k|^2}\Big|\leq C(\|\rho\|^2+\|\widehat{u}\|^2).
$$
Adding up \eqref{G4.11}, $\tau_4\times$\eqref{G4.12} and $\tau_5\times$\eqref{G4.14}, we infer that
\begin{align*}
\partial_t\mathcal{E}_{\mathcal{F}}(\widehat{f},\widehat{\rho},\widehat{u})+\frac{\lambda_{10}|k|^2}{1+|k|^2}\mathcal{E}_{\mathcal{F}}(\widehat{f},\widehat{\rho},\widehat{u})\leq  C(\|\widehat{G}\|_{L^2_v}^2+\|\nu^{-\frac{1}{2}}\widehat{h}\|_{L^2_v}^2),  
\end{align*}
for some $\lambda_{10}>0$,
where we have  used the basic inequality $|\widehat{u}|^2\leq 2|\widehat{u}-\widehat{b}|^2+2|\widehat{b}|^2$.
According to Gronwall's inequality, it comes out
\begin{align}\label{G4.18}
 \mathcal{E}_{\mathcal{F}}(\widehat{f},\widehat{\rho},\widehat{u})\leq e^{-\frac{\lambda_{10}|k|^2}{1+|k|^2}}  \mathcal{E}_{\mathcal{F}}(\widehat{f}_0,\widehat{\rho}_0,\widehat{u}_0)+\int_{0}^te^{-\frac{\lambda_{10}|k|^2}{1+|k|^2}(t-s)}(\|\widehat{G}(s)\|_{L^2_v}^2+\|\nu^{-\frac{1}{2}}\widehat{h}(s)\|_{L^2_v}^2)\mathrm{d}s.
\end{align}
As in 
\cite{Ks-1983}, \eqref{G4.5}--\eqref{G4.6} can be obtained by using \eqref{G4.18} and then
setting non-homogeneous source $S 
_f=0$ and the initial
data $(f_0,\rho_0 ,u_0) = 0$, respectively. 
For simplicity, the detailed proof is omitted here.
\end{proof}

Next, we turn to the proof of the rate of convergence of $(f,\rho,u)$ to the problem \eqref{I-3}--\eqref{I-6}.
Thanks to the definition of $\mathcal{E}_0(t)$ and Young's inequality, we have
\begin{align*}
\mathcal{E}_0(t)\leq C(\|a\|_{H^1}^2+\|\nabla b\|_{H^1}^2+\|\{\mathbf{I}-\mathbf{P}   \}f\|_{H^1}^2).   
\end{align*}
Similarly, 
\begin{align*}
\sum_{|\alpha|\leq 1}\int_{\mathbb{R}^3}\partial^\alpha u\cdot\partial^\alpha\nabla\rho\mathrm{d}x\leq C(\|u\|_{H^1}^2+\|\nabla\rho\|_{H^1}^2).   
\end{align*}
It follows from the definition of $\mathcal{E}(t)$ and $\mathcal{D}(t)$
in \eqref{G3.45} that
\begin{align*}
 \mathcal{E}(t)\leq&\, C(\|\{\mathbf{I}-\mathbf{P}   \}f\|_{H^2_{x,v}}^2+\|a\|_{H^2}^2+\|b\|_{H^2}^2+\|u\|_{H^2}^2+\|\rho\|_{H^2}^2) \nonumber\\
 \leq\,&C(\mathcal{D}(t)+\|(f,\rho,u)\|_{\mathcal{Z}_2}^2),
\end{align*}
which, together with \eqref{G3.46}, ensures that 
\begin{align*}
\frac{{\rm d}}{{\rm d}t}\mathcal{E}(t)+\lambda_{11}\mathcal{E}(t)\leq C\|U\|_{\mathcal{Z}_2}^2,    
\end{align*}
for some $\lambda_{11}>0$. Here, for notation simplicity,  we still use $U(t) = (f(t), \rho(t), u(t))$ to denote   the solution of the problem \eqref{I-3}--\eqref{I-6}, and $U_0$ for the initial data, i.e.  $U_0=(f_0,\rho_0,u_0)$. 

Using Gronwall's inequality, we arrive at
\begin{align}\label{G4.22}
\mathcal{E}(t)\leq e^{-\lambda_{11}t}\mathcal{E}(0)+C\int_{0}^te^{-\lambda_{11}(t-s)} \|U\|_{\mathcal{Z}_2}^2\mathrm{d}s.  
\end{align}
To estimate  $\|U\|_{\mathcal{Z}_2}^2$, we rewrite \eqref{I-3}--\eqref{I-5} as
\begin{align}\label{G4.23}
U(t)=\mathbb{A}(t)U_0+\int_0^t\mathbb{A}(t-s)({S_f,S_\rho,S_u})(s))\mathrm{d}s,
\end{align}
with
\begin{align}\label{G4.24}
{ S_f}:=\,&(1+\rho)\Big(-u\cdot\nabla_vf+\frac{1}{2}u\cdot vf\Big)+\rho(\mathcal{L}f+u
\cdot v\sqrt{M}) \nonumber\\
=\,&-(1+\rho)u\cdot\nabla_v\{\mathbf{I}-\mathbf{P}_0\}f+\frac{1}{2}(1+\rho)u\cdot v\{\mathbf{I}-\mathbf{P}_0\}f-(1+\rho)u\cdot\nabla_v \mathbf{P}_0f\nonumber\\
&+\frac{1}{2}(1+\rho)u\cdot v \mathbf{P}_0 f+\rho\mathcal{L}\{\mathbf{I}-\mathbf{P}\}f+\rho\mathcal{L}\mathbf{P}f
+\rho u\cdot v\sqrt{M}\nonumber\\
=\,&\Big(\nabla_v\cdot G-\frac{1}{2}v\cdot G+(1+\rho)u\cdot a v\sqrt{M}\Big)+h+\rho(u-b)\cdot v\sqrt{M},
\end{align}
where $G:=-(1+\rho)u\{\mathbf{I}-\mathbf{P}_0\}f$ and $h:=\rho\mathcal{L}\{\mathbf{I}-\mathbf{P}\}f$, and
\begin{align}\label{G4.25}
S_\rho:=\,&-\rho{\rm div}u-\nabla\rho\cdot u, \\\label{G4.26} 
S_u:=\,&-u\cdot\nabla u-\Big(\frac{P^{\prime}(1+\rho)}{1+\rho}-P^{\prime}(1)     \Big)\nabla\rho-\frac{\mu\rho}{1+\rho}\Delta u-au.
\end{align}
By inserting \eqref{G4.24}--\eqref{G4.26} into \eqref{G4.23}, we further obtain
\begin{align}\label{G4.27}
U(t)=\,\,&\mathbb{A}(t)U_0+\int_0^t\mathbb{A}(t-s)(\nabla_v\cdot G-\frac{1}{2}v\cdot G,0,0)\mathrm{d}s\nonumber\\
&+\int_0^t\mathbb{A}(t-s)\big((1+\rho)u\cdot av\sqrt{M},0,0\big)\mathrm{d}s+\int_0^t\mathbb{A}(t-s)(\rho (u-b)\cdot v\sqrt{M},0,0\big)\mathrm{d}s\nonumber\\
&+\int_0^t\mathbb{A}(t-s)(h,0,0)\mathrm{d}s+\int_0^t\mathbb{A}(t-s)(0,S_\rho(s),0)\mathrm{d}s+\int_0^t\mathbb{A}(t-s)(0,0,S_u(s))\mathrm{d}s\nonumber\\
=:\,\,&\sum_{j=1}^7K_j(t).
\end{align}
Define 
\begin{align*}
\mathcal{E}_{0,\infty}(t):=\sup_{0\leq s\leq t}(1+s)^\frac{3}{2}\mathcal{E}(s).    
\end{align*}
Taking $\alpha^{\prime}=0$, $\alpha=0$ and $q=1$ in \eqref{G4.5} and
using Theorem \ref{T4.1},
we have
\begin{align}\label{G4.28}
\|K_1(t)\|_{\mathcal{Z}_2}\leq  C(1+t)^{-\frac{3}{4}}(\|U_0\|_{\mathcal{Z}_1}+\|U_0\|_{\mathcal{Z}_2}).   
\end{align}
Similar to \eqref{G4.28}, by using Lemmas \ref{LA.1}--\ref{LA.2} and
Lemma \ref{LA.3}, the terms $K_3(t)$, $K_4(t)$, $K_5(t)$, and $K_6(t)$ can be estimated as \begin{align}\label{G4.29}
\|K_3(t)\|_{\mathcal{Z}_2}+\|K_4(t)\|_{\mathcal{Z}_2}\leq\, & C\int_0^t(1+t-s)^{-\frac{3}{4}}\|(1+\rho)u\cdot a v\sqrt{M}\|_{\mathcal{Z}_1\cap\mathcal{Z}_2}\mathrm{d}s\nonumber\\
&+C\int_0^t(1+t-s)^{-\frac{3}{4}}\|\rho(u-b)\cdot  v\sqrt{M}\|_{\mathcal{Z}_1\cap\mathcal{Z}_2}\mathrm{d}s\nonumber\\
\leq\,& C(1+\delta)\int_0^t(1+t-s)^{-\frac{3}{4}}\mathcal{E}(s)\mathrm{d}s\nonumber\\
\leq\,&C\mathcal{E}_{0,\infty}(t)\int_0^t(1+t-s)^{-\frac{3}{4}}(1+s)^{-\frac{3}{2}}\mathrm{d}s\nonumber\\
\leq\,&C(1+t)^{-\frac{3}{4}}\mathcal{E}_{0,\infty}(t),
\end{align}
and
\begin{align}\label{G4.30}
&\|K_6(t)\|_{\mathcal{Z}_2}+\|K_7(t)\|_{\mathcal{Z}_2}\nonumber\\
\leq \,&
C\int_0^t(1+t-s)^{-\frac{3}{4}}\|S_\rho (s)\|_{\mathcal{Z}_1\cap\mathcal{Z}_2}\mathrm{d}s +C\int_0^t(1+t-s)^{-\frac{3}{4}}\|S_u (s)\|_{\mathcal{Z}_1\cap\mathcal{Z}_2}\mathrm{d}s\nonumber\\
\leq\,&C\int_0^t(1+t-s)^{-\frac{3}{4}}\big(\|\rho\|_{H^1}\|\nabla u\|_{H^1}+\|(\rho,u)\|_{L^{\infty}}\|\nabla (\rho,u)\|\big)\mathrm{d}s\nonumber\\
&+C\int_0^t(1+t-s)^{-\frac{3}{4}}\big(\|(\rho,u)\|_{H^1}\|\nabla (\rho,u)\|+ \|(\rho,u)\|_{L^{\infty}}\|\nabla (\rho,u)\|  \big)\mathrm{d}s\nonumber\\
&+C\int_0^t(1+t-s)^{-\frac{3}{4}}\big(\|a\|(\|u\|+\|u \|_{L^{\infty}})+\|\nabla^2 u\|(\|\rho\|+\|\rho\|_{L^{\infty}})  \big)\mathrm{d}s\nonumber\\
\leq\,&C\int_0^t(1+t-s)^{-\frac{3}{4}}\mathcal{E}(s)\mathrm{d}s\nonumber\\
\leq\,&C\mathcal{E}_{0,\infty}(t)\int_0^t(1+t-s)^{-\frac{3}{4}}(1+s)^{-\frac{3}{2}}\mathrm{d}s\nonumber\\
\leq\,&C(1+t)^{-\frac{3}{4}}\mathcal{E}_{0,\infty}(t).
\end{align}
To handle the term $K_2(t)$, we utilize $\mathbf{P}_0G=0$ and \eqref{G4.5} 
to compute
\begin{align}\label{G4.31}
\|K_2(t)\|_{\mathcal{Z}_2}^2\leq\,& C\int_0^t(1+t-s)^{-\frac{3}{2}}
\|(1+\rho)u\{\mathbf{I}-\mathbf{P}_0\}f\|_{\mathcal{Z}_1\cap\mathcal{Z}_2}^2\mathrm{d}s\nonumber\\
\leq\,& C\int_0^t(1+t-s)^{-\frac{3}{2}}
(1+\|\rho\|_{H^{2}})^2\mathcal{E}^2(s)\mathrm{d}s\nonumber\\
\leq\,&C(1+\delta)\mathcal{E}_{0,\infty}^2(t)\int_0^t(1+t-s)^{-\frac{3}{2}}(1+s)^{-3}\mathrm{d}s\nonumber\\
\leq\,&C(1+t)^{-\frac{3}{2}}\mathcal{E}_{0,\infty}^2(t).
\end{align}
For the last term $K_5(t)$, it follows from \eqref{G3.47}, \eqref{G4.6} and  $\mathbf{P}h=0$
that
\begin{align}\label{G4.32}
\|K_5(t)\|_{\mathcal{Z}_2}^2   \leq\,& C\int_0^t(1+t-s)^{-\frac{3}{2}}
\|\nu^{-\frac{1}{2}}\rho\mathcal{L}\{\mathbf{I}-\mathbf{P}\}f\|_{\mathcal{Z}_1\cap\mathcal{Z}_2}^2\mathrm{d}s\nonumber\\
\leq\,& C\int_0^t(1+t-s)^{-\frac{3}{2}}
\mathcal{E}(s)\mathcal{D}(s)\mathrm{d}s\nonumber\\
\leq\,& C\mathcal{E}_{0,\infty}(t)\int_0^t(1+t-s)^{-\frac{3}{2}}(1+s)^{-\frac{3}{2}}
\mathcal{D}(s)\mathrm{d}s\nonumber\\
\leq\,& C(1+t)^{-\frac{3}{2}}\mathcal{E}_{0,\infty}(t)\int_0^t
\mathcal{D}(s)\mathrm{d}s\nonumber\\
\leq\,& C\mathcal{E}(0)(1+t)^{-\frac{3}{2}}\mathcal{E}_{0,\infty}(t).
\end{align}
By substituting \eqref{G4.28}--\eqref{G4.32} into \eqref{G4.27},
we end up with
\begin{align*}
\|U(t)\|^2_{\mathcal{Z}_2}\leq C(1+t)^{-\frac{3}{2}} \big(\|U_0\|_{\mathcal{Z}_1\cap\mathcal{Z}_2}^2+\mathcal{E}(0)\mathcal{E}_{0,\infty}(t)+\mathcal{E}_{0,\infty}^2(t)\big),  
\end{align*}
which, together with \eqref{G4.22}, ensures that 
\begin{align*}
\mathcal{E}(t)\leq C(1+t)^{-\frac{3}{2}}\big(\|U_0\|_{\mathcal{Z}_1\cap\mathcal{H}^2}^2+\mathcal{E}(0)\mathcal{E}_{0,\infty}(t)+\mathcal{E}_{0,\infty}^2(t)\big).  
\end{align*}
This combines the smallness of $\mathcal{E}(0)$ implies that 
\begin{align*}
\mathcal{E}_{0,\infty}(t)\leq C\big(\|U_0\|_{\mathcal{Z}_1\cap\mathcal{H}^2}^2+\mathcal{E}_{0,\infty}^2(t)\big).  
\end{align*}
Thanks to the smallness of $\|U_0\|_{\mathcal{Z}_1\cap\mathcal{H}^2}^2$ and
Lemma \ref{LLA.4}, we consequently have
\begin{align*}
\mathcal{E}_{0,\infty}(t)\leq C\|U_0\|_{\mathcal{Z}_1\cap\mathcal{H}^2}^2, 
\end{align*}
for any $t>0$. As a result, the following inequality holds
\begin{align*}
\mathcal{E}(t)\leq C(1+t)^{-\frac{3}{2}}\|U_0\|_{\mathcal{Z}_1\cap\mathcal{H}^2}^2, 
\end{align*}
which completes the proof of \eqref{G1.8}. \hfill $\square$

\section{Time-decay rates of the derivatives of strong solutions}

In the previous   section, in order to obtain $\eqref{G1.8}$, we have required    the smallness of $\|(f_0,\rho_0,u_0)\|_{\mathcal{Z}_1}$, by the classical energy method. In what follows, we develop the refined energy estimates to the problem \eqref{I-3}--\eqref{I-6}. In fact, we 
  only need the norm  $\|(f_0,\rho_0,u_0)\|_{\mathcal{Z}_1}$ to be
bounded to  establish the optimal time-decay rates of solutions  $(f,\rho,u)$ (see \eqref{G1.8}) and their gradients $\nabla(f,\rho,u)$ (see \eqref{G1.9}). In addition, the decay rates of solutions 
in $L^p$-norm $(2\leq p\leq \infty)$ (see \eqref{G1.11} and \eqref{G1.12})  can  also be achieved and hence complete the proof of Theorem \ref{T1.2}.

\subsection{Decay-in-time estimates of strong solutions}

First,  we give the  estimates on the  second-order derivatives of $(f,\rho,u)$. 
\begin{lem}\label{L5.1}
For the strong solutions to the Cauchy problem \eqref{I-3}--\eqref{I-6},  there exists a  constant $\lambda_{12}>0$, such that
\begin{align}\label{G5.1}
&\frac{1}{2} \frac{\mathrm{d}}{\mathrm{d} t}\big(\|\nabla^2 f\|_{L_v^2(L^2)}^2+\|\nabla^2(u,\rho)\|^2\big) \nonumber\\
& \qquad + \lambda_{12} \big(\|\{\mathbf{I}-\mathbf{P}\} \nabla^2 f\|_\nu^2 +\left\|\nabla^2(b-u)\right\|^2+\|\nabla^3u\|^2\big)  
  \leq C\delta\|\nabla^2(a,b,\rho)\|_{L^2}^2,
\end{align}
for any $0 \leq t \leq T$. 
\end{lem}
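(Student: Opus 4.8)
The plan is to reproduce the energy computation of Lemma~\ref{L3.2} specialized to the top order $|\alpha|=2$, and to sort every resulting term into one of the three dissipative quantities on the left of \eqref{G5.1}. Concretely, I apply $\partial^\alpha$ with $|\alpha|=2$ to \eqref{I-3}--\eqref{I-5} (obtaining a system of the form \eqref{G3.11}), pair the three equations with $\partial^\alpha f$ in $L_{x,v}^2$, with $P'(1)\partial^\alpha\rho$ and $\partial^\alpha u$ in $L_x^2$, add, and sum over $|\alpha|=2$. This produces $\tfrac12\tfrac{\rm d}{{\rm d}t}\big(\|\nabla^2 f\|_{L_v^2(L^2)}^2+\|\nabla^2(u,\rho)\|^2\big)$; the transport terms vanish after integration by parts; the Fokker--Planck term $\int\langle-\mathcal L\nabla^2 f,\nabla^2 f\rangle\,{\rm d}x$ appears, and by the third estimate in \eqref{G2.3} it is $\ge\lambda_0\|\{\mathbf I-\mathbf P\}\nabla^2 f\|_\nu^2+\|\nabla^2 b\|^2$, the $\|\nabla^2 b\|^2$ piece combining with the $b-u$ terms coming from the $u$- and $f$-equations to yield $\|\nabla^2(b-u)\|^2$; and the viscous term $\int\frac{\mu}{1+\rho}|\nabla^3 u|^2\,{\rm d}x$ appears, which by \eqref{rho-1} is $\ge\tfrac{2\mu}{3}\|\nabla^3 u\|^2$. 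These furnish exactly the three dissipations in \eqref{G5.1}.

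It then remains to estimate the nonlinear and commutator terms, which are the $|\alpha|=2$ counterparts of $I_8$--$I_{18}$ in \eqref{G3.12}. Each is a product of a factor built from $(\rho,u)$ of order at most $2$, controlled by $\|(\rho,u)\|_{H^2}\le\delta$ through $H^2\hookrightarrow L^\infty$ and $H^1\hookrightarrow L^6$, with one or two factors carrying two derivatives of the solution. The organizing principle that makes the estimate close with the stated right-hand side is: whenever a factor $\{\mathbf I-\mathbf P\}f$ (or $a$, $b$) carries at most one spatial derivative, it is placed in $L_x^6$, and Sobolev's inequality converts it into $\|\{\mathbf I-\mathbf P\}\nabla^2 f\|_\nu$ (respectively $\|\nabla^2(a,b)\|$); hence every such contribution is bounded by $C\delta\big(\|\{\mathbf I-\mathbf P\}\nabla^2 f\|_\nu^2+\|\nabla^2(a,b,\rho)\|^2+\|\nabla^2 u\|^2+\|\nabla^3 u\|^2\big)$, after which the $\|\{\mathbf I-\mathbf P\}\nabla^2 f\|_\nu^2$ and $\|\nabla^3 u\|^2$ pieces are absorbed into the left of \eqref{G5.1} for $\delta$ small, and the $\|\nabla^2 u\|^2$ pieces produced by the transport nonlinearities are disposed of by one integration by parts together with the interpolation $\|\nabla^2 u\|^2\le\|\nabla u\|\|\nabla^3 u\|$. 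The genuinely second-order nonlinearities $\partial^\alpha\big((\tfrac{P'(1+\rho)}{1+\rho}-P'(1))\nabla\rho\big)$ and $\partial^\alpha\big(\tfrac{\mu}{1+\rho}\Delta u\big)$ are treated exactly as $I_{15}$, $I_{16}$, $I_{24}$ in Lemma~\ref{L3.2}: one integration by parts moves a derivative off the top-order factor onto $\partial^\alpha u$, so no factor ever carries three derivatives.

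The crux, as throughout the paper, is the density-dependent coupling, namely the term $\partial^\alpha\big(\rho(\mathcal L f-u\cdot\nabla_v f+u\cdot v\sqrt M)\big)$ paired with $\partial^\alpha f$ (the analogue of $I_{10}$; compare \eqref{G3.15}). The difficulty is that $\mathcal L$ and $u\cdot\nabla_v$ carry $v$-derivatives and the Gaussian weight, and — in contrast with the density-independent friction \eqref{fo-aaaa} — $\rho\mathcal L f$ cannot be recast as an exact energy flux, so every piece of its Leibniz expansion must be integrated by parts in $v$, writing $\mathcal L\,\cdot=\tfrac1{\sqrt M}{\rm div}_v\big[M\nabla_v(\cdot/\sqrt M)\big]$, and then controlled in the $\nu$-norm. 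Splitting $\mathcal L f=\mathcal L\{\mathbf I-\mathbf P\}f-\mathbf P_1 f$: the top-order piece $\rho\,\mathcal L\{\mathbf I-\mathbf P\}\nabla^2 f$ pairs, after this integration by parts, with $\nabla^2\{\mathbf I-\mathbf P\}f$ to give $C\|\rho\|_{L^\infty}\|\{\mathbf I-\mathbf P\}\nabla^2 f\|_\nu^2\le C\delta\|\{\mathbf I-\mathbf P\}\nabla^2 f\|_\nu^2$; the pieces with one or two derivatives on $\rho$ are handled with $\|\nabla\rho\|_{L^3},\|\nabla^2\rho\|_{L^2}\le C\delta$ and the $L_x^6$-trick above; and the remaining piece $-\rho\,\mathbf P_1\nabla^2 f=-\rho\,\nabla^2 b\cdot v\sqrt M$ contributes only $C\delta\|\nabla^2\rho\|\,\|\nabla^2 b\|$, which is already on the right of \eqref{G5.1}. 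The same Leibniz/integration-by-parts bookkeeping disposes of $\partial^\alpha\big((1+\rho)u\cdot vf\big)$ and $\partial^\alpha(\rho\,u\cdot\nabla_v f)$. The step I expect to be the main obstacle is precisely this bookkeeping: in the expansion of $\partial^\alpha(\rho\mathcal L f)$ one must verify, term by term, that each piece either gains a spatial derivative (so that it lands in the second-order $\nu$-dissipation) or carries a genuine $O(\delta)$ smallness factor, and that none of them produces a right-hand-side contribution outside the triple $\|\{\mathbf I-\mathbf P\}\nabla^2 f\|_\nu^2$, $\|\nabla^3 u\|^2$, $\|\nabla^2(a,b,\rho)\|^2$; granting this, choosing $\delta$ small enough to absorb the $O(\delta)$ multiples of $\|\{\mathbf I-\mathbf P\}\nabla^2 f\|_\nu^2$ and $\|\nabla^3 u\|^2$ into the left-hand side yields \eqref{G5.1}.
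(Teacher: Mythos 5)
Your overall route---rederiving the energy identity \eqref{G3.12} at top order $|\alpha|=2$, extracting the three dissipations from the Fokker--Planck term (via \eqref{G2.3}), the linear drag coupling, and the viscous term $\int\frac{\mu}{1+\rho}|\nabla\partial^\alpha u|^2\,\mathrm{d}x\ge\frac{2\mu}{3}\|\nabla^3u\|^2$, and then re-estimating the $|\alpha|=2$ analogues of $I_8$--$I_{18}$ so that only second-order quantities survive---is exactly the paper's proof, which is stated as a one-line remark reducing to the case $|\alpha|=2$ of \eqref{G3.12}. The genuine gap is your disposal of the $C\delta\|\nabla^2u\|^2$ remainders (produced by the $|\alpha|=2$ versions of $I_{13}$, $I_{14}$, $I_{17}$, the transport nonlinearities, and the $\rho\mathcal{L}f$ coupling). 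You propose the interpolation $\|\nabla^2u\|^2\le\|\nabla u\|\,\|\nabla^3u\|$; but after Young's inequality this leaves either $C\delta\|\nabla u\|^2$, a \emph{first-order} quantity that is neither dominated by the admissible right-hand side $C\delta\|\nabla^2(a,b,\rho)\|^2$ nor absorbable into the purely second/third-order dissipation on the left, or---if you instead invoke $\|\nabla u\|\le\delta$ from \eqref{G3.1}---a time-independent constant of size $O(\delta^4)$. Either outcome breaks the statement \eqref{G5.1} as written, and is fatal for its intended use: in \eqref{G5.20} the right-hand side must consist solely of low-frequency second-order terms for the decay iteration to close, so neither $\|\nabla u\|^2$ nor a constant can be tolerated there.

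The missing idea is the one the paper explicitly flags: the elementary inequality $\|\nabla^2u\|\le\|\nabla^2(u-b)\|+\|\nabla^2b\|$. With it, $C\delta\|\nabla^2u\|^2\le C\delta\|\nabla^2(u-b)\|^2+C\delta\|\nabla^2b\|^2$; the first piece is absorbed into $\lambda_{12}\|\nabla^2(b-u)\|^2$ for $\delta$ small, and the second is exactly of the allowed form $C\delta\|\nabla^2(a,b,\rho)\|^2$. Once you replace your interpolation step by this substitution (everywhere your $L^6$-trick or the Leibniz expansion outputs $\|\nabla^2u\|^2$), the rest of your bookkeeping---in particular the treatment of $\partial^\alpha(\rho\mathcal{L}f)$ by splitting off $\mathbf{P}_1f$, integrating by parts in $v$, and weighting the Leibniz terms with $\|\rho\|_{L^\infty}$, $\|\nabla\rho\|_{L^3}$, $\|\nabla^2\rho\|\le C\delta$---coincides with the intended argument and the estimate closes.
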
    

\begin{rem}
In fact, using the previous results stated in Lemma \ref{G3.2}, we only need to consider the case  $|\alpha|$ = 2 in \eqref{G3.12}. By utilizing \eqref{G3.1}, Lemmas \ref{LA.1}--\ref{LLA.3}, the basic inequality $\|\nabla^2u\|\leq (\|\nabla^2(u-b)\|+\|\nabla^2 b\|)$ and Hölder’s and Young's inequalities, we easily get
the desired \eqref{G5.1}.
\end{rem}

Next, we apply the low- and  high-frequency decomposition method (see \cite{Ww-CMS-2024,Li-Ni-Wu})
to obtain more refined energy estimates of the strong solutions to  \eqref{I-3}--\eqref{I-6}.
We start to estimate the dissipation of $(a^{H},b^{H})$. Applying
the operator $\phi_1(D_x)$ (see the definition in the Appendix) on \eqref{G3.17}, we have
\begin{equation}\label{G5.2}
\left\{\begin{aligned}
&\partial_{t}a^{H}+\nabla\cdot b^{H}=0,\\
&\partial_{t} b_i^{H}+\partial_{i} a^{H}+\sum_{j=1}^3\partial_j\Gamma_{ij}(\{\mathbf{I}-\mathbf{P}\}f)^{H}=\big((1+\rho)(u_i-b_i)\big)^{H}+\big((1+\rho)u_ia\big)^{H},  \\
&\partial_{i}b_j^{H}+\partial_j b_i^{H}-\big((1+\rho)(u_ib_j+u_jb_i)\big)^{H}=-\partial_t \Gamma_{ij}(\{\mathbf{I}-\mathbf{P}\}f)^{H}+\Gamma_{ij}(\mathfrak{l}+\mathfrak{r}+\mathfrak{s})^{H},  
 \end{aligned}
 \right.
\end{equation}
for $1\leq i,j\leq 3$, where $\mathfrak{l},\mathfrak{r},\mathfrak{s}$, and $\Gamma_{ij}(\cdot)$ are defined 
by \eqref{lrs-1}--\eqref{gammaa}. Therefore, we define the following temporal functional
$\mathcal{E}_0^H(t)$, which only contains the high frequency part of $f(t,x,v)$, as
\begin{align*}
\mathcal{E}_0^H(t) :=  \sum_{i, j=1}^3 \int_{\mathbb{R}^3} \nabla(\partial_i b_j^H+\partial_j b_i^H) \nabla \Gamma_{ij}(\{\mathbf{I}-\mathbf{P}\} f)^H \mathrm{d}x-\int_{\mathbb{R}^3} \nabla a^H \nabla\nabla\cdot b^H \mathrm{d} x.
\end{align*} 

We have  
\begin{lem}\label{L5.2}
For the strong solutions to the Cauchy problem \eqref{I-3}-\eqref{I-6}, there exists a positive constant $\lambda_{13}>0$, such that
\begin{align}\label{G5.4}
& \frac{\mathrm{d}}{\mathrm{d} t} \mathcal{E}_0^H(t)+\lambda_{13}(\|\nabla^2 a^H\|^2+\|\nabla^2 b^H\|^2)\nonumber\\
\, &\quad \leq C\|\nabla^2\{\mathbf{I}-\mathbf{P}\} f\|^2+C\|\nabla^2(u-b)\|^2 +C\delta\|\nabla^2(a,b,\rho)\|^2,
\end{align}
for any $0 \leq t\leq T$. 
\end{lem}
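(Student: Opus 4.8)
The plan is to follow the proof of Lemma~\ref{L3.3} almost verbatim, replacing the operator $\partial^{\alpha}$ ($|\alpha|\le1$) used there by $\nabla\phi_{1}(D_{x})$, and to exploit the one structural gain of the high-frequency regime: since $|k|$ is bounded below on $\mathrm{supp}\,\phi_{1}$, one has $\|g^{H}\|\le\|\nabla g^{H}\|\le C\|\nabla^{2}g^{H}\|$ for every $g$, so the single extra gradient built into $\mathcal{E}_{0}^{H}$ already produces control of all of $\|\nabla^{2}(a^{H},b^{H})\|^{2}$. Because $\phi_{1}(D_{x})$ commutes with $\partial_{x}$ and with $\Gamma_{ij}$ and is bounded on $L^{2}_{x}$, every \emph{linear} manipulation transcribes without change; for the nonlinear terms, which do not commute with the cutoff, I would simply discard it by $\|\phi_{1}(D_{x})(\cdot)\|\le\|\cdot\|$ and estimate the full product directly, using the a priori bound \eqref{G3.1} and Lemmas~\ref{LA.1}--\ref{LLA.3}.

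First I would run the $b^{H}$-argument. As in \eqref{G3.20}--\eqref{G3.21}, $\sum_{i,j}\|\nabla(\partial_{i}b_{j}^{H}+\partial_{j}b_{i}^{H})\|^{2}=2\|\nabla^{2}b^{H}\|^{2}+2\|\nabla\nabla\cdot b^{H}\|^{2}$, and substituting the right-hand side of \eqref{G5.2}$_{3}$ into one factor produces a time-derivative term which is exactly the first half of $\mathcal{E}_{0}^{H}(t)$, a term carrying $\nabla\partial_{t}b^{H}$ tested against $\nabla\nabla\cdot\Gamma_{ij}(\{\mathbf{I}-\mathbf{P}\}f)^{H}$, and the source terms $\nabla((1+\rho)(u_{i}b_{j}+u_{j}b_{i}))^{H}$ and $\nabla\Gamma_{ij}(\mathfrak{l}+\mathfrak{r}+\mathfrak{s})^{H}$. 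For the $\nabla\partial_{t}b^{H}$ term I would substitute \eqref{G5.2}$_{2}$; after integration by parts its leading part is $\le(\varepsilon+C\delta)\|\nabla^{2}a^{H}\|^{2}$, while the remainder is bounded by $C\|\nabla^{2}\{\mathbf{I}-\mathbf{P}\}f\|^{2}+C\|\nabla^{2}(u-b)\|^{2}+C\delta\|\nabla^{2}(a,b)\|^{2}$, using $\|(a^{H},b^{H})\|\le C\|\nabla^{2}(a^{H},b^{H})\|$ and $\|u-b\|_{L^{\infty}}\le C\|u-b\|_{H^{2}}\le C\delta$ by \eqref{G3.1}. The one point requiring care is how many $x$-derivatives land on $\{\mathbf{I}-\mathbf{P}\}f$: in $\mathfrak{l}=\mathcal{L}\{\mathbf{I}-\mathbf{P}\}f-v\cdot\nabla\{\mathbf{I}-\mathbf{P}\}f$ the functional $\Gamma_{ij}$ absorbs the velocity weight, so $\Gamma_{ij}(\mathfrak{l})$ carries at most one $x$-derivative of $\{\mathbf{I}-\mathbf{P}\}f$ and $\nabla\Gamma_{ij}(\mathfrak{l})^{H}$ at most two, matching the admissible term $C\|\nabla^{2}\{\mathbf{I}-\mathbf{P}\}f\|^{2}$ in \eqref{G5.4}; the expressions built from $\mathfrak{r}$, $\mathfrak{s}$ and from $(1+\rho)(u\otimes b)$ are at least quadratic in $(\rho,u,\{\mathbf{I}-\mathbf{P}\}f)$, so Sobolev embedding together with \eqref{G3.1} renders them $C\delta$-small.

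Next I would reproduce \eqref{G3.26}--\eqref{G3.29} for the high-frequency component: writing $\|\nabla^{2}a^{H}\|^{2}=\sum_{i}\int_{\mathbb{R}^{3}}\nabla\partial_{i}a^{H}\cdot\nabla\partial_{i}a^{H}\,\mathrm{d}x$ and substituting $\partial_{i}a^{H}=-\partial_{t}b_{i}^{H}-\sum_{j}\partial_{j}\Gamma_{ij}(\{\mathbf{I}-\mathbf{P}\}f)^{H}+((1+\rho)(u_{i}-b_{i}))^{H}+((1+\rho)u_{i}a)^{H}$ from \eqref{G5.2}$_{2}$ into one factor. The $\partial_{t}b^{H}$ contribution becomes, after moving $\partial_{t}$ out and inserting $\partial_{t}a^{H}=-\nabla\cdot b^{H}$ from \eqref{G5.2}$_{1}$, a time derivative recognizable as the second half of $\mathcal{E}_{0}^{H}(t)$ plus $\|\nabla\nabla\cdot b^{H}\|^{2}$, while the remaining pieces are controlled by $\varepsilon\|\nabla^{2}a^{H}\|^{2}+C\|\nabla^{2}\{\mathbf{I}-\mathbf{P}\}f\|^{2}+C\|\nabla^{2}(u-b)\|^{2}+C\delta\|\nabla^{2}(a,b,\rho)\|^{2}$ exactly as in \eqref{G3.28}. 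Adding this identity, with a small weight, to the $b^{H}$-identity of the previous step, using $\|\nabla\nabla\cdot b^{H}\|^{2}\le\|\nabla^{2}b^{H}\|^{2}$, and choosing $\varepsilon$ and $\delta$ small enough to absorb the $\varepsilon\|\nabla^{2}a^{H}\|^{2}$ and $C\delta\|\nabla^{2}(a,b)\|^{2}$ contributions into the left-hand side, delivers \eqref{G5.4} with some $\lambda_{13}>0$ and $\mathcal{E}_{0}^{H}(t)$ as defined.

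I expect the main difficulty to be the bookkeeping for the nonlinear source terms carrying the density $\rho$, namely $\Gamma_{ij}(\mathfrak{s})$ and the quantities $((1+\rho)(u-b))^{H}$, $((1+\rho)u\,a)^{H}$, $((1+\rho)(u\otimes b))^{H}$: one must check that after the Leibniz rule, two $x$-derivatives, and the reductions from $\Gamma_{ij}$ absorbing velocity weights, every resulting term lands in one of the three admissible classes on the right of \eqref{G5.4}, or in a lower-order norm of $\{\mathbf{I}-\mathbf{P}\}f$ carrying a factor $\delta$, which is harmless. The worst cases are those in which both spatial derivatives fall on $\rho$, bounded by $\|\nabla^{2}\rho\|\,\|(u-b,\{\mathbf{I}-\mathbf{P}\}f)\|_{L^{\infty}}\le C\delta\|\nabla^{2}\rho\|$ via \eqref{G3.1}, and the purely micro piece $\rho\,\mathcal{L}\{\mathbf{I}-\mathbf{P}\}f$ hidden in $\mathfrak{s}$, which after applying $\Gamma_{ij}$ (using self-adjointness of $\mathcal{L}$ to transfer it onto the fixed weight $(v_{i}v_{j}-1)\sqrt{M}$) reduces to $\rho$ times zeroth-order velocity moments of $\{\mathbf{I}-\mathbf{P}\}f$, hence is again $C\delta$-small after one differentiation. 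The linear skeleton of the proof is a routine transcription of Lemma~\ref{L3.3} combined with the elementary high-frequency inequality recorded in the first paragraph.
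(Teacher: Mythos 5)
Your proposal is correct and follows essentially the same route as the paper's proof: the same two identities derived from \eqref{G5.2}$_3$ and \eqref{G5.2}$_2$, the same generation of the two parts of $\mathcal{E}_0^H(t)$ as time derivatives, the same use of the high-frequency inequality of Lemma \ref{LA.7} to upgrade lower-order high-frequency norms, and the same treatment of the nonlinear sources by dropping the cutoff via $L^2$-boundedness of $\phi_1(D_x)$ and invoking \eqref{G3.1} with Sobolev embeddings. The only cosmetic difference is that the paper adds the two identities with weight one (the $\frac{1}{4}$-coefficients already make the absorption work and the $\|\nabla\nabla\cdot b^H\|^2$ terms cancel exactly), which keeps precisely the stated functional $\mathcal{E}_0^H$, whereas your ``small weight'' combination would rescale its second term.
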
   

\begin{proof}
By applying $\nabla$ to \eqref{G5.2}$_3$, and then multiplying by $\nabla(\partial_ib_j^H+\partial_jb_i^H)$ and taking integration, we have 
\begin{align}\label{G5.5}
&\sum_{i,j=1}^3\|\nabla(\partial_ib_j^H+\partial_jb_i^H)\|^2\nonumber\\
=\,&\sum_{i,j=1}^3\int_{\mathbb{R}^3}\nabla(\partial_ib_j^H+\partial_jb_i^H)
\nabla\Big(\big((1+\rho)(u_ib_j+u_jb_i)\big)^H+\Gamma_{ij}(\mathfrak{l}+\mathfrak{r}+\mathfrak{s})^H\Big)\mathrm{d}x        \nonumber\\
&-\sum_{i,j=1}^3\int_{\mathbb{R}^3}\nabla(\partial_ib_j^H+\partial_jb_i^H)\nabla\partial_t \Gamma_{ij}(\{\mathbf{I}-\mathbf{P}\}f)^H  {\rm d}x  \nonumber\\
=\,&-\frac{{\rm d}}{{\rm d}t}\sum_{i,j=1}^3\int_{\mathbb{R}^3}\nabla(\partial_ib_j^H+\partial_jb_i^H)\nabla\Gamma_{ij}(\{\mathbf{I}-\mathbf{P}\}f)^H\mathrm{d}x\nonumber\\
&-2\sum_{i,j=1}^3\int_{\mathbb{R}^3}\nabla \partial_t b_i^H\nabla\partial_j\Gamma_{ij}(\{\mathbf{I}-\mathbf{P}\}f)^H\mathrm{d}x\nonumber\\
&+\sum_{i,j=1}^3\int_{\mathbb{R}^3}\nabla(\partial_ib_j^H
+\partial_jb_i^H)\nabla\Big(\big((1+\rho)(u_ib_j+u_jb_i)\big)^H+\Gamma_{ij}(\mathfrak{l}+\mathfrak{r}+\mathfrak{s})^H      \Big)\mathrm{d}x.
\end{align}
From \eqref{G5.2}$_2$, Young's inequality, Lemmas \ref{LA.1}--\ref{LLA.3} and Lemma \ref{LA.7}, the second 
term on the right-hand side of \eqref{G5.5} can be estimated as 
\begin{align}\label{G5.6}
&-2\sum_{i,j=1}^3\int_{\mathbb{R}^3}\nabla \partial_t b_i^H\nabla\partial_j\Gamma_{ij}(\{\mathbf{I}-\mathbf{P}\}f)^H\mathrm{d}x\nonumber\\  
=\,&2 \sum_{i,j=1}^3\int_{\mathbb{R}^3}\nabla \bigg(\partial_i a^H+\sum_{m=1}^3\partial_m\Gamma_{im}(\{\mathbf{I}-\mathbf{P}\}f)^H\bigg)\nabla\partial_j\Gamma_{ij}(\{\mathbf{I}-\mathbf{P}\}f)^H\mathrm{d}x\nonumber\\ 
&-2\sum_{i,j=1}^3\int_{\mathbb{R}^3}\nabla\Big(\big((1+\rho)(u_i-b_i)\big)^H+\big((1+\rho)u_i a\big)^H  \Big)\nabla\partial_j\Gamma_{ij}(\{\mathbf{I}-\mathbf{P}\}f)^H\mathrm{d}x\nonumber\\
\leq\,& \frac{1}{4}\|\nabla^2 a^H\|^2+C\|\nabla^2 \{\mathbf{I}-\mathbf{P}\}f \|^2+C\|\nabla(u-b)^H\|^2\nonumber\\
&+C\big(\|\nabla\rho\|_{L^6}^2\|u-b\|_{L^3}^2+\|\rho\|_{L^3}^2\|\nabla (u-b)\|_{L^6}^2\big)+C\|\nabla \rho\|_{L^6}^2\|a\|_{L^6}^2\|u\|_{L^6}^2\nonumber\\
&+C(1+\|\rho\|_{H^2})^2(\|\nabla u\|_{L^6}^2\|a\|_{L^3}^2+\|\nabla a\|_{L^6}^2\|u\|_{L^3}^2)\nonumber\\
\leq\,&\frac{1}{4}\|\nabla^2 a^H\|^2+C\|\nabla^2 \{\mathbf{I}-\mathbf{P}\}f \|^2+C\|\nabla^2(u-b)\|^2+C\delta\|\nabla^2(a,b,\rho)\|^2.
\end{align}
For the remaining term on the right hand-side of \eqref{G5.5}, by
  Young's inequality, we obtain
\begin{align}\label{NJK5.6}
&\sum_{i,j=1}^3\int_{\mathbb{R}^3}\nabla(\partial_ib_j^H+\partial_jb_i^H)
\nabla\Big(\big((1+\rho)(u_ib_j+u_jb_i)\big)^H+\Gamma_{ij}(\mathfrak{l}+\mathfrak{r}+\mathfrak{s})^H     \Big)\mathrm{d}x\nonumber\\
\,&\quad \leq\frac{1}{2}\sum_{i,j=1}^3\|\nabla(\partial_ib_j^H+\partial_jb_i^H)\|^2+C\sum_{i,j=1}^3
\big\|\nabla\big((1+\rho)(u_ib_j+u_jb_i)\big)^H\big\|^2\nonumber\\
&\qquad +C\sum_{i,j=1}^3\big(\|\nabla\Gamma_{ij}(\mathfrak{l})^H\|^2+\|\nabla\Gamma_{ij}(\mathfrak{r})^H\|^2
+\|\nabla\Gamma_{ij}(\mathfrak{s})^H\|^2   \big).
\end{align}
Similar to \eqref{G3.24}, by a direct calculation, the last two terms on the above inequality can  be estimated as 
\begin{align}\label{G5.8}
&\sum_{i,j=1}^3
\big\|\nabla\big((1+\rho)(u_ib_j+u_jb_i)\big)^H\big\|^2+\sum_{i,j=1}^3\Big(\|\nabla\Gamma_{ij}(\mathfrak{l})^H\|^2
+\|\nabla\Gamma_{ij}(\mathfrak{r})^H\|^2+\|\nabla\Gamma_{ij}(\mathfrak{s})^H\|^2                  \Big)\nonumber\\
\leq\,&C(1+\|\rho\|_{H^2})^2(\|b\|_{L^3}^2\|\nabla u\|_{L^6}^2+\|u\|_{L^3}^2\|\nabla b\|_{L^6}^2)\nonumber\\
&+C\|\nabla \rho\|_{L^6}^2\|u\|_{L^6}^2\|b\|_{L^6}^2+C\|\nabla \{\mathbf{I}-\mathbf{P}\}f^H\|^2+ C\|\nabla^2 \{\mathbf{I}-\mathbf{P}\}f^H\|^2   \nonumber\\
&+C\|u\|_{L^3}^2\|\nabla  \{\mathbf{I}-\mathbf{P}\}f \|_{L_v^2(L^6)}^2+C\|\nabla u\|_{L^6}^2\|  \{\mathbf{I}-\mathbf{P}\}f \|_{L_v^2(L^3)}^2\nonumber\\
&+C\|\nabla\rho\|_{L^6}^2\Big(\|\{\mathbf{I}-\mathbf{P}\}f \|_{L_v^2(L^3)}^2+\|u\|_{L^6}^2\|\{\mathbf{I}-\mathbf{P}\}f \|_{L_v^2(L^6)}^2\Big)   \nonumber\\
\leq\,&C\delta\big(\|\nabla^2(\rho,b)\|^2+\|\nabla^2(u-b)\|^2\big)+C\| \nabla^2\{\mathbf{I}-\mathbf{P}\}f \|^2.
\end{align}
Substituting \eqref{G5.8} into \eqref{NJK5.6}, we further obtain
\begin{align}\label{NJK5.8}
&\sum_{i,j=1}^3\int_{\mathbb{R}^3}\nabla(\partial_ib_j^H+
\partial_jb_i^H)\nabla\Big(\big((1+\rho)(u_ib_j+u_jb_i)\big)^H+\Gamma_{ij}(\mathfrak{l}+\mathfrak{r}+\mathfrak{s})^H     \Big)\mathrm{d}x\nonumber\\  
&\quad \leq\frac{1}{2}\sum_{i,j=1}^3\|\nabla(\partial_ib_j^H+\partial_jb_i^H)\|^2+C\delta\big(\|\nabla^2(\rho,b)\|^2+\|\nabla^2(u-b)\|^2\big)+C\| \nabla^2\{\mathbf{I}-\mathbf{P}\}f \|^2.
\end{align}

It is easy to observe that
\begin{align}\label{G5.9}
\sum_{i,j=1}^3\|\nabla(\partial_ib_j^H+\partial_jb_i^H)\|^2=2\|\nabla^2 b^H\|^2+2\|\nabla\cdot\nabla b^H\|^2.   
\end{align}
Then, putting \eqref{G5.6} and \eqref{NJK5.8}--\eqref{G5.9} into \eqref{G5.5},
one gets
\begin{align}\label{G5.10}
&\frac{{\rm d}}{{\rm d}t}\sum_{i,j=1}^3\int_{\mathbb{R}^3}\nabla(\partial_ib_j^H+\partial_jb_i^H)\nabla\Gamma_{ij}(\{\mathbf{I}-\mathbf{P}\}f)^H\mathrm{d}x+\|\nabla^2b^H\|^2+\|\nabla\cdot\nabla b^H\|^2\nonumber\\
\,&\quad \leq\frac{1}{4}\|\nabla^2 a^H\|^2+C\|\nabla^2(u-b)\|^2+C\|\nabla^2 \{\mathbf{I}-\mathbf{P}\}f \|^2+C\delta\|\nabla^2(a,b,\rho)\|^2.
\end{align}
In addition, from \eqref{G5.2}$_2$, it holds
\begin{align}\label{G5.11}
\| \nabla^2 a^H\|^2 
=\,& \sum_{i=1}^3\int_{\mathbb{R}^3}\nabla\partial_i a^H\nabla\partial_i a^H\mathrm{d}x\nonumber\\
=\,&\sum_{i=1}^3\int_{\mathbb{R}^3}\nabla\partial_i a^H\nabla\Big(\big((1+\rho)(u_i-b_i)\big)^H+\big((1+\rho)u_ia\big)^H        \Big)\mathrm{d}x\nonumber\\
&-\sum_{i=1}^3\int_{\mathbb{R}^3}\nabla\partial_i a^H\nabla\bigg(\partial_{t} b_i^H+\sum_{j=1}^3\partial_j\Gamma_{ij}(\{\mathbf{I}-\mathbf{P}\}f)^H\bigg)\mathrm{d}x\nonumber\\
=\,&-\frac{{\rm d}}{{\rm d}t}\sum_{i=1}^3\int_{\mathbb{R}^3}\nabla\partial_i a^H\nabla b_i^H\mathrm{d}x+\sum_{i=1}^3\int_{\mathbb{R}^3}\nabla\partial_i\partial_t a^H
\nabla b_i^H\mathrm{d}x\nonumber\\
&+\sum_{i=1}^3\int_{\mathbb{R}^3}\nabla\partial_i a^H\nabla\bigg(-\sum_{j=1}^3\partial_j\Gamma_{ij}(\{\mathbf{I}-\mathbf{P}\}f)^H 
+\big((1+\rho)(u_i-b_i)\big)^H+\big((1+\rho)u_ia\big)^H        \bigg)\mathrm{d}x.
\end{align}  
Owing to \eqref{G5.2}$_1$, one has
\begin{align}\label{G5.12}
\sum_{i=1}^3\int_{\mathbb{R}^3}\nabla\partial_i\partial_t a^H
\nabla b_i^H\mathrm{d}x=-\int_{\mathbb{R}^3}\nabla\partial_t a^H
\nabla\nabla\cdot b^H\mathrm{d}x =\|\nabla\nabla\cdot b^H\|^2.  
\end{align}
By using Lemmas \ref{LA.1}--\ref{LLA.3} and Lemma \ref{LA.7}, Hölder’s and Young's inequalities, we arrive at 
\begin{align}\label{G5.13}
 &\sum_{i=1}^3\int_{\mathbb{R}^3}\nabla\partial_i a^H\nabla\bigg(-\sum_{j=1}^3\partial_j\Gamma_{ij}(\{\mathbf{I}-\mathbf{P}\}f)^H 
+\big((1+\rho)(u_i-b_i)\big)^H+\big((1+\rho)u_ia\big)^H        \bigg)\mathrm{d}x\nonumber\\
\leq\,&\frac{1}{4}\|\nabla^2 a^H\|^2+C\|\nabla^2\{\mathbf{I}-\mathbf{P}\}f^H\|^2+C\|\nabla(u-b)^H\|^2+C\|\rho\|_{L^3}^2\|\nabla(u-b)\|_{L^6}^2\nonumber\\
&+C\|\nabla\rho\|_{L^6}^2\|u-b\|_{L^3}^2+C(1+\|\rho\|_{H^2})^2(\|u\|_{L^3}^2\|\nabla a\|_{L^6}^2+\|a\|_{L^3}\|\nabla u\|_{L^6}^2)\nonumber\\
&+C\|\nabla\rho\|_{L^6}^2\|u\|_{L^6}^2\|a\|_{L^6}^2      \nonumber\\
\leq\,&\frac{1}{4}\|\nabla^2 a^H\|^2+C\|\nabla^2(u-b)\|^2+C\|\nabla^2\{\mathbf{I}-\mathbf{P}\}f\|^2+C\delta\|\nabla^2(a,b,\rho)\|^2.
\end{align}
Inserting \eqref{G5.12}--\eqref{G5.13} into \eqref{G5.11}, we deduce that
\begin{align}\label{G5.14}
&-\frac{{\rm d}}{{\rm d}t}\int_{\mathbb{R}^3}\nabla a^H\nabla \nabla\cdot b^H\mathrm{d}x+\frac{1}{2}\|\nabla^2 a^H\|^2\nonumber\\
\,&\quad\leq\|\nabla \nabla\cdot b^H\|^2+C\|\nabla^2(u-b)\|^2+C\|\nabla^2\{\mathbf{I}-\mathbf{P}\}f\|^2+C\delta\|\nabla^2(a,b,\rho)\|^2.
\end{align}
Adding \eqref{G5.10} and \eqref{G5.14} up, one gets
\begin{align*}
&\frac{{\rm d}}{{\rm d}t}\mathcal{E}^H_0(t)+\|\nabla^2 b^H\|^2+\frac{1}{4}\|\nabla^2 a^H\|^2\leq C\|\nabla^2(u-b)\|^2+C\|\nabla^2\{\mathbf{I}-\mathbf{P}\}f\|^2+C\delta\|\nabla^2(a,b,\rho)\|^2.
\end{align*}
Thus,  \eqref{G5.4} holds.
\end{proof}
\begin{lem}
For the strong solutions to the Cauchy problem \eqref{I-3}-\eqref{I-6}, there exists a positive constant $\lambda_{14}>0$, such that    
\begin{align}\label{G5.15}
&-\frac{\rm d}{{\rm d}t}\int_{\mathbb{R}^3}\nabla{\rm div} u\nabla\rho^H\mathrm{d}x+\lambda_{14}\|\nabla^2 \rho^H\|^{2}\nonumber\\
\,&\quad \leq C\Big(\|\nabla^2(b-u) \|^2+\|\nabla^2\rho^L\|^2+\|\nabla^2 u\|_{H^1}^2\Big)+C\delta\|\nabla^2(a,b,\rho)\|^2, 
\end{align} 
for any $0 \leq t \leq T$. 
\end{lem}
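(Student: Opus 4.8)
The plan is to establish a cross estimate by applying the operator $\nabla\,\mathrm{div}$ to the momentum equation \eqref{I-5} and taking the $L^2(\mathbb{R}^3)$-inner product of the outcome with $\nabla\rho^H$. First I would decompose \eqref{I-5} as $\partial_t u+P'(1)\nabla\rho-\mu\Delta u=(b-u)+\widetilde S_u$, where $\widetilde S_u:=-au-u\cdot\nabla u-\big(\tfrac{P'(1+\rho)}{1+\rho}-P'(1)\big)\nabla\rho+\mu\big(\tfrac{1}{1+\rho}-1\big)\Delta u$ collects the nonlinear remainder. Applying $\nabla\,\mathrm{div}$ and pairing with $\nabla\rho^H$, the pressure contribution is $P'(1)\int\nabla\Delta\rho\cdot\nabla\rho^H\,\mathrm{d}x=-P'(1)\|\nabla^2\rho^H\|^2-P'(1)\int\Delta\rho^L\,\Delta\rho^H\,\mathrm{d}x$ (using $\|\nabla^2 g\|=\|\Delta g\|$ and $\rho=\rho^L+\rho^H$), and Young's inequality on the low--high cross term leaves the coercive quantity $P'(1)\|\nabla^2\rho^H\|^2$ while generating the $C\|\nabla^2\rho^L\|^2$ term appearing in \eqref{G5.15}.

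Next, the time-derivative term splits as $\int\nabla\,\mathrm{div}\,\partial_t u\cdot\nabla\rho^H\,\mathrm{d}x=\frac{\mathrm{d}}{\mathrm{d}t}\int\nabla\,\mathrm{div}\,u\cdot\nabla\rho^H\,\mathrm{d}x-\int\nabla\,\mathrm{div}\,u\cdot\partial_t\nabla\rho^H\,\mathrm{d}x$, which is the origin of the total-derivative term in \eqref{G5.15}; I then substitute $\partial_t\rho^H=-\mathrm{div}\,u^H+S_\rho^H$ from the continuity equation \eqref{I-4} with $S_\rho=-\rho\,\mathrm{div}\,u-\nabla\rho\cdot u$, producing a linear piece $\int\nabla\,\mathrm{div}\,u\cdot\nabla\,\mathrm{div}\,u^H\,\mathrm{d}x\le C\|\nabla^2 u\|^2$ and a nonlinear piece bounded by $C\delta\|\nabla^2(\rho,u)\|^2$. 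The viscous term is handled by $-\mu\int\nabla\,\mathrm{div}\,\Delta u\cdot\nabla\rho^H\,\mathrm{d}x=\mu\int\mathrm{div}\,\Delta u\,\Delta\rho^H\,\mathrm{d}x\le\tfrac18\|\nabla^2\rho^H\|^2+C\|\nabla^3 u\|^2$; this unavoidable loss of one derivative on $u$ is precisely why $\|\nabla^2 u\|_{H^1}^2$ (and not merely $\|\nabla^2 u\|^2$) must appear on the right of \eqref{G5.15}. For the friction term I would use $\int\nabla\,\mathrm{div}(b-u)\cdot\nabla\rho^H\,\mathrm{d}x\le\|\nabla^2(b-u)\|\,\|\nabla\rho^H\|$ together with the frequency-localization bound $\|\nabla\rho^H\|\le C\|\nabla^2\rho^H\|$ (valid since $\rho^H$ is spectrally supported away from the origin), which yields $\le\tfrac18\|\nabla^2\rho^H\|^2+C\|\nabla^2(b-u)\|^2$. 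Finally $\int\nabla\,\mathrm{div}\,\widetilde S_u\cdot\nabla\rho^H\,\mathrm{d}x=-\int\nabla\widetilde S_u:\nabla^2\rho^H\,\mathrm{d}x\le\tfrac18\|\nabla^2\rho^H\|^2+C\|\nabla\widetilde S_u\|^2$, and each term of $\nabla\widetilde S_u$ (and of $\nabla S_\rho$) is estimated by Hölder's inequality, the bounds \eqref{rho-1}, \eqref{G3.1} and Lemmas \ref{LA.1}--\ref{LLA.3}, distributing derivatives so as to always place two of them on a single factor and using $H^1\hookrightarrow L^3$, $\dot H^1\hookrightarrow L^6$; this gives $C\|\nabla\widetilde S_u\|^2\le C\delta\|\nabla^2(a,b,\rho)\|^2+C\delta\|\nabla^2 u\|_{H^1}^2$.

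Collecting the above and choosing the Young parameters together with $\delta$ small enough to absorb every $O(\varepsilon)$- or $O(\delta)$-multiple of $\|\nabla^2\rho^H\|^2$ on the right into the coercive $P'(1)\|\nabla^2\rho^H\|^2$ on the left yields \eqref{G5.15} with $\lambda_{14}$ a fixed fraction of $P'(1)$. I expect the main obstacle to be twofold: keeping track of the derivative loss in the viscous term $\mu\Delta u$ (which forces the $H^1$ rather than $L^2$ norm of $\nabla^2 u$ on the right-hand side, and is the structural reason this estimate, unlike its Section 3 counterpart, cannot close by itself without the low--high frequency machinery of Section 5), and reorganizing every nonlinear contribution from $\widetilde S_u$ and $\nabla S_\rho$ so that only second-order spatial norms survive --- which relies crucially on the frequency-localization inequality $\|\nabla\rho^H\|\lesssim\|\nabla^2\rho^H\|$ and on the Sobolev embeddings above to upgrade first-order factors.
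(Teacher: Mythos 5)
Your proposal is correct and follows essentially the same route as the paper: the paper applies $\nabla$ to \eqref{I-5} and tests against $\nabla^2\rho^H$ (terms $I_{33}$--$I_{39}$ in \eqref{G5.16}), which after integration by parts is the same cross estimate as your $\nabla\,\mathrm{div}$-then-$\nabla\rho^H$ pairing, with the identical ingredients: commuting the time derivative via the continuity equation to produce $-\frac{\mathrm{d}}{\mathrm{d}t}\int\nabla\mathrm{div}u\,\nabla\rho^H\mathrm{d}x$, the low--high cross term yielding $\|\nabla^2\rho^L\|^2$, the frequency bound $\|\nabla\rho^H\|\lesssim\|\nabla^2\rho^H\|$, the viscous term forcing $\|\nabla^3 u\|$ (hence $\|\nabla^2 u\|_{H^1}$), and Young plus the smallness $\delta$ to absorb into $P'(1)\|\nabla^2\rho^H\|^2$. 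Your explicit splitting of the nonlinearity into $\widetilde S_u$ and the derivative distribution you prescribe reproduce the paper's estimates \eqref{G5.17}--\eqref{G5.18}, so no substantive difference remains.
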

\begin{proof}
Applying $\nabla$ to \eqref{I-5},
then multiplying the resulting identity by $\nabla^2\rho^H$ and
taking integration, one has   
\begin{align}\label{G5.16}
P^\prime(1)\|\nabla^2\rho^H\|^2=\,&-\int_{\mathbb{R}^3}\nabla^2\rho^H\cdot\nabla\partial_t u\mathrm{d}x+\int_{\mathbb{R}^3}\nabla^2\rho^H\cdot\nabla(b-u)\mathrm{d}x+ \int_{\mathbb{R}^3}\nabla^2\rho^H\cdot\nabla^2\rho^L\mathrm{d}x \nonumber\\
&+\int_{\mathbb{R}^3}\nabla^2\rho^H\cdot\nabla(au)\mathrm{d}x-\int_{\mathbb{R}^3}\nabla^2\rho^H\cdot\nabla(u\cdot\nabla u)\mathrm{d}x\nonumber\\
&-\int_{\mathbb{R}^3}\nabla^2\rho^H\cdot\nabla\bigg(\Big(\frac{P^{\prime}(1)}{1+\rho}-P^{\prime}(1)       \Big)\nabla\rho\bigg)\mathrm{d}x+\int_{\mathbb{R}^3}\nabla^2\rho^H\cdot\nabla\Big(\frac{\mu\Delta u}{1+\rho}        \Big)\mathrm{d}x\nonumber\\
\equiv:\,&\sum_{j=33}^{39}I_j.
\end{align}
By making use of Lemmas \ref{LA.1}--\ref{LLA.3} and Young's inequality, the term $I_{33}$ can be estimated as 
\begin{align}\label{G5.17}
I_{33}=\,&-\frac{{\rm d}}{{\rm d}t}\int_{\mathbb{R}^3}\nabla^2  \rho^H\cdot\nabla u\mathrm{d}x+\int_{\mathbb{R}^3}\nabla^2  \partial_t\rho^H\cdot\nabla u\mathrm{d}x\nonumber\\
=\,&-\frac{{\rm d}}{{\rm d}t}\int_{\mathbb{R}^3}\nabla^2  \rho^H\cdot\nabla u\mathrm{d}x+\int_{\mathbb{R}^3}\nabla\big((\rho+1){\rm div} u+\nabla\rho\cdot u            \big)^H \nabla{\rm div} u\mathrm{d}x\nonumber\\
\leq\,&\frac{{\rm d}}{{\rm d}t}\int_{\mathbb{R}^3}\nabla  \rho^H\nabla{\rm div}u\mathrm{d}x+C\|\nabla{\rm div}u
\|^2+C(\|\nabla\rho\|_{L^3}^2\|\nabla u\|_{L^6}^2+\|\nabla^2 \rho\|^2\| u\|_{L^{\infty}}^2)\nonumber\\
&+C(1+\|\rho\|_{H^2})^2\|\nabla{\rm div}u\|^2+C\|\nabla\rho\|_{L^3}^2\|\nabla u\|_{L^6}^2    \nonumber\\
\leq\,&\frac{{\rm d}}{{\rm d}t}\int_{\mathbb{R}^3}\nabla  \rho^H\nabla{\rm div}u\mathrm{d}x+C\|\nabla^2 u\|^2+C\delta\|\nabla^2\rho\|^2.
\end{align}
Thanks to  Young's inequality, Lemmas \ref{LA.1}--\ref{LLA.3},   Lemma \ref{LA.7}, and \eqref{rho-1}, we  obtain 
that 
\begin{align}
I_{34}=\,&-\int_{\mathbb{R}^3}\nabla\rho^H\nabla{\rm div}(b-u)\mathrm{d}x \leq \frac{1}{7}P^{\prime}(1)\|\nabla^2\rho^H\|^2+C\|\nabla^2(u-b)\|^2,\label{G5.18a}\\
I_{35}\leq\,& \frac{1}{7}P^{\prime}(1)\|\nabla^2\rho^H\|^2+C\|\nabla^2 \rho^L\|^2,\label{G5.18b}\\
I_{36}\leq\,& \frac{1}{7}P^{\prime}(1)\|\nabla^2\rho^H\|^2+C(\|a\|_{L^3}^2\|\nabla u\|_{L^6}^2+\|u\|_{L^3}^2\|\nabla a\|_{L^6}^2)\nonumber\\
\leq\,& \frac{1}{7}P^{\prime}(1)\|\nabla^2\rho^H\|^2+C\delta\|\nabla^2(a,u)\|^2,\label{G5.18c}\\
I_{37}\leq\,& \frac{1}{7}P^{\prime}(1)\|\nabla^2\rho^H\|^2+C\|u\|_{L^{\infty}}^2\|\nabla^2 u\|^2+C\|\nabla u\|_{L^3}^2\|\nabla u\|_{L^6}^2\nonumber\\
\leq\,&\frac{1}{7}P^{\prime}(1)\|\nabla^2\rho^H\|^2+C\delta\|\nabla^2 u\|^2,\label{G5.18d}\\
I_{38}\leq\,& \frac{1}{7}P^{\prime}(1)\|\nabla^2\rho^H\|^2+C\|\rho\|_{L^{\infty}}^2\|\nabla^2 \rho\|^2+C\|\nabla \rho\|_{L^3}^2\|\nabla \rho\|_{L^6}^2\nonumber\\
\leq\,&\frac{1}{7}P^{\prime}(1)\|\nabla^2\rho^H\|^2+C\delta\|\nabla^2 \rho\|^2,\label{G5.18e}\\
I_{39}\leq\,& \frac{1}{7}P^{\prime}(1)\|\nabla^2\rho^H\|^2+C\Big\|\frac{1}{1+\rho}\Big\|_{L^{\infty}}^2\|\nabla^3 u\|^2+C\Big\|\nabla \frac{1}{1+\rho}\Big\|_{L^3}^2\|\nabla^2 u\|_{L^6}^2\nonumber\\
\leq\,&\frac{1}{7}P^{\prime}(1)\|\nabla^2\rho^H\|^2+C\|\nabla^3 u\|^2.\label{G5.18}
\end{align}
Inserting \eqref{G5.17}--\eqref{G5.18} into \eqref{G5.16}, we eventually get
the desired \eqref{G5.15}.
\end{proof}
In the what follows, we define the new energy functional $\mathcal{E}_1(t)$ and the corresponding dissipation $\mathcal{D}_1(t)$ as 
\begin{align*}
\mathcal{E}_1(t):=\,&\|\nabla^2 f\|_{L_v^2(L^2)}^2+\|\nabla^2(u,\rho)\|^2
+\tau_{6}\mathcal{E}^H_0(t)-\tau_7\int_{\mathbb{R}^3}\nabla{\rm div} u\nabla\rho^H\mathrm{d}x
\nonumber\\
\mathcal{D}_1(t):=\,&\|\nabla^2(b-u)\|_{H^2}^2+\|\nabla^2(a^H,b^H,\rho^H)\|^2+\|\nabla^3 u\|^{2}+\,\|\nabla^2\{\mathbf{I}-\mathbf{P}\}f\|_{\nu}^2,
\end{align*}
where $0<\tau_6, \tau_7\ll 1$  are sufficiently small constants.

Adding \eqref{G5.1} to $\tau_6\times$\eqref{G5.4} and $\tau_7\times$\eqref{G5.15}, we deduce that
\begin{align}\label{G5.20}
\frac{\mathrm{d}}{\mathrm{d}t}\mathcal{E}_1(t)+\lambda_{16}\mathcal{D}_1(t)\leq C
\|\nabla^2(a^L,b^L,\rho^L,u^L)\|^2,
\end{align}
for some $\lambda_{16}>0$.
Here, Lemma \ref{LA.7} and following facts have been used:
\begin{align*}
\|\nabla^2 u\|^2\leq\,& C(\|\nabla^2 u^L\|^2+\|\nabla^2 u^H\|^2)\leq C(\|\nabla^2 u^L\|^2+\|\nabla^3 u\|^2),\nonumber\\
\|\nabla^2(a,b,\rho)\|\leq\,& C\big(\|\nabla^2(a^L,b^L,\rho^L)\|^2+\|\nabla^2(a^H,b^H,\rho^H)\|^2\big).
\end{align*}
According to Young's inequality and Lemma \ref{LA.7}, we have
\begin{align*}
\int_{\mathbb{R}^3}\nabla{\rm div}u\nabla\rho^H\mathrm{d}x\leq&\, \frac{1}{2}\|
\nabla^2 u\|^2+\frac{1}{2}\|\nabla\rho^H\|^2\leq \frac{1}{2}\|
\nabla^2 u\|^2+\frac{1}{2}\|\nabla^2\rho\|^2,\nonumber\\ 
\mathcal{E}_0^H(t)\leq\,& \|\nabla^2 b\|^2+\frac{1}{2}\|\nabla a^H\|^2+\frac{1}{2}\|\nabla\{\mathbf{I}-\mathbf{P}\}f^H\|^2\nonumber\\
\leq\,& \|\nabla^2 b\|^2+\frac{1}{2}\|\nabla^2 a\|^2+\frac{1}{2}\|\nabla^2\{\mathbf{I}-\mathbf{P}\}f\|^2,
\end{align*}
which implies that
\begin{align*}
\mathcal{E}_1(t)\backsim \|\nabla^2 f\|_{L_v^2(L^2)}^2+\|\nabla^2(u,\rho)\|^2.
\end{align*}
Applying $\phi_0(D_x)$ to \eqref{G4.4} gives
\begin{align*}
U^L(t)=\mathbb{A}^L(t)U_0+\int_0^t\mathbb{A}^L(t-s)(S_f(s),0,0)\mathrm{d}s,
\end{align*}
with $U^L(t) := (f^L(t), \rho^L(t), u^L(t))$.

\begin{thm}\label{T5.4}
Let $1\leq q\leq2$. For all $\alpha$, $\alpha^{\prime}$ with 
$\alpha^{\prime}\leq\alpha$ and $m=|\alpha-\alpha^{\prime}|$, we have
\begin{align}
& \|\partial^{\alpha}\mathbb{A}^L(t)U_{0}\|_{\mathcal{Z}_{2}}
\leq C(1+t)^{-\frac{3}{2}\Big(\frac{1}{q}-\frac{1}{2}\Big)-\frac{m}{2}}
\|\partial^{\alpha^{\prime}}U_{0}\|_{\mathcal{Z}_{q}},\label{G5.23}\\
&\bigg\|\partial^{\alpha}\int_{0}^{t}\mathbb{A}^L(t-s)(S_f(s),0,0){\rm d}s\bigg\|_{\mathcal{Z}_{2}}^{2}\nonumber\\
&\quad \leq C\int_0^t(1+t-s)^{-3\Big(\frac{1}{q}-\frac{1}{2}\Big)-m}
\Big\|\partial^{\alpha^{\prime}}\big(G(s),\nu^{-\frac{1}{2}}h(s)\big)\Big\|_{\mathcal{Z}_{q}}^{2}
{\rm d}s,\label{G5.24}
\end{align}
for any $t\geq 0$.
\end{thm}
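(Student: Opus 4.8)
The plan is to localize the Fourier-space estimate established in the proof of Theorem~\ref{T4.1} to the low-frequency ball $\{|k|\le r_{0}\}$ carved out by $\phi_{0}(D_{x})$, where $r_{0}$ denotes the radius of the cut-off, and to exploit the elementary but decisive fact that on this ball the dissipation rate obeys $\tfrac{|k|^{2}}{1+|k|^{2}}\ge c_{0}|k|^{2}$ with $c_{0}:=(1+r_{0}^{2})^{-1}$. Feeding this into the estimate \eqref{G4.18} (obtained there via Gronwall's inequality) and using $\mathcal{E}_{\mathcal{F}}(\widehat f,\widehat\rho,\widehat u)\backsim\|\widehat f\|_{L^{2}_{v}}^{2}+|\widehat\rho|^{2}+|\widehat u|^{2}$ yields, for every $|k|\le r_{0}$, the pointwise-in-$k$ bound
\[
\|\widehat f(t,k)\|_{L^{2}_{v}}^{2}+|\widehat\rho(t,k)|^{2}+|\widehat u(t,k)|^{2}\lesssim e^{-c_{0}\lambda_{10}|k|^{2}t}\bigl(\|\widehat f_{0}(k)\|_{L^{2}_{v}}^{2}+|\widehat\rho_{0}(k)|^{2}+|\widehat u_{0}(k)|^{2}\bigr)
\]
when $S_{f}=0$, and the analogous bound with the right-hand side replaced by $\int_{0}^{t}e^{-c_{0}\lambda_{10}|k|^{2}(t-s)}\bigl(\|\widehat G(s,k)\|_{L^{2}_{v}}^{2}+\|\nu^{-1/2}\widehat h(s,k)\|_{L^{2}_{v}}^{2}\bigr)\mathrm{d}s$ when $U_{0}=0$; the splitting into these two pieces is as in \cite{Ks-1983}.

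To prove \eqref{G5.23}, I would use Plancherel in $x$ together with the factorization $|(ik)^{\alpha}\widehat U_{0}(k)|\le|k|^{m}\,|\widehat{\partial^{\alpha'}U_{0}}(k)|$ to get
\[
\|\partial^{\alpha}\mathbb{A}^{L}(t)U_{0}\|_{\mathcal{Z}_{2}}^{2}\lesssim\int_{|k|\le r_{0}}|k|^{2m}e^{-c_{0}\lambda_{10}|k|^{2}t}\,\|\widehat{\partial^{\alpha'}U_{0}}(k)\|_{L^{2}_{v}}^{2}\,\mathrm{d}k,
\]
and then apply H\"{o}lder in $k$ with exponents $p=q'/2$ and $p'$, which is admissible precisely because $1\le q\le2$ forces $q'\ge2$, hence $p\ge1$. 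The first factor $\|\widehat{\partial^{\alpha'}U_{0}}\|_{L^{q'}_{k}(L^{2}_{v})}^{2}$ is bounded by $\|\partial^{\alpha'}U_{0}\|_{\mathcal{Z}_{q}}^{2}$ by first interchanging $L^{q'}_{k}$ and $L^{2}_{v}$ (Minkowski's integral inequality, valid since $q'\ge2$) and then invoking the Hausdorff--Young inequality; the second factor is controlled by the Gaussian scaling estimate $\int_{|k|\le r_{0}}|k|^{2mp'}e^{-c_{0}\lambda_{10}p'|k|^{2}t}\,\mathrm{d}k\lesssim(1+t)^{-3/2-mp'}$ (the cut-off keeps the integral bounded for small $t$, while rescaling $k\mapsto k/\sqrt{t}$ handles large $t$). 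Since $\tfrac1{p'}=1-\tfrac2{q'}$ one checks $\tfrac{3}{2p'}=\tfrac32\bigl(1-\tfrac2{q'}\bigr)=3\bigl(\tfrac1q-\tfrac12\bigr)$, so raising the second factor to the power $1/p'$ produces exactly $(1+t)^{-3(\frac1q-\frac12)-m}$, and a square root yields \eqref{G5.23}. For \eqref{G5.24} I would repeat the same steps with $U_{0}=0$: interchanging the order of integration (Tonelli) and using that $\nu^{-1/2}$ commutes with $\partial_{x}$, one reaches
\[
\Bigl\|\partial^{\alpha}\!\!\int_{0}^{t}\mathbb{A}^{L}(t-s)(S_{f}(s),0,0)\,\mathrm{d}s\Bigr\|_{\mathcal{Z}_{2}}^{2}\lesssim\int_{0}^{t}\!\!\int_{|k|\le r_{0}}|k|^{2m}e^{-c_{0}\lambda_{10}|k|^{2}(t-s)}\bigl(\|\widehat{\partial^{\alpha'}G}(s,k)\|_{L^{2}_{v}}^{2}+\|\nu^{-1/2}\widehat{\partial^{\alpha'}h}(s,k)\|_{L^{2}_{v}}^{2}\bigr)\mathrm{d}k\,\mathrm{d}s,
\]
and applying the H\"{o}lder--Hausdorff--Young argument above to the inner $k$-integral for each fixed $s$ gives the time-weighted convolution in \eqref{G5.24}.

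The step needing the most care, and essentially the only place this differs from the already-proved \eqref{G4.5}--\eqref{G4.6}, is the mixed-norm Fourier bookkeeping: one must interchange the $L^{2}_{v}$ and $L^{q'}_{k}$ norms before applying Hausdorff--Young (legitimate thanks to $q'\ge2$), and must verify that the exponent emerging from the Gaussian integral, after the power $1/p'$, is exactly $3(\tfrac1q-\tfrac12)+m$. This is where the hypothesis $1\le q\le2$ is used, and it is also where the gain over Theorem~\ref{T4.1} manifests itself: since $\phi_{0}(D_{x})$ has removed the high frequencies, no residual term $\|\partial^{\alpha}U_{0}\|_{\mathcal{Z}_{2}}$ (respectively $\|\partial^{\alpha}(G,\nu^{-1/2}h)\|_{\mathcal{Z}_{2}}$) survives on the right-hand side. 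Beyond this, the proof is a routine transcription of the argument already carried out for Theorem~\ref{T4.1}.
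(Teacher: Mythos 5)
Your proposal is correct and follows essentially the same route as the paper, which proves Theorem \ref{T5.4} simply by remarking that the process of Theorem \ref{T4.1} (the Kawashima-type Fourier energy estimate \eqref{G4.18}, followed by H\"older, Minkowski and Hausdorff--Young in $k$ and the Gaussian scaling bound) applies verbatim, with the cut-off $\phi_0(D_x)$ restricting to $|k|\le r_0$ so that $\frac{|k|^2}{1+|k|^2}\gtrsim |k|^2$ and the high-frequency residual terms $\|\partial^\alpha U_0\|_{\mathcal{Z}_2}$, $\|\partial^\alpha(G,\nu^{-1/2}h)\|_{\mathcal{Z}_2}$ disappear. Your exponent bookkeeping ($\tfrac{3}{2p'}=3(\tfrac1q-\tfrac12)$ with $p=q'/2$) and the treatment of the Duhamel term via the Gronwall bound with zero data match what the paper intends, so the argument is complete.
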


\begin{rem}
Through the same process used in Theorem \ref{T4.1}, we can easily get \eqref{G5.23} and \eqref{G5.24}. 
We point out that  in Theorem \ref{T5.4},  only the low-frequency parts of solutions  are included, which
is different from Theorem \ref{T4.1}.
\end{rem}

With Theorem \ref{T5.4} in hand, we shall  establish the estimate of 
$ \|\nabla^2 f\|_{L_v^2(L^2)}^2+\|\nabla^2(u,\rho)\|^2$.
Adding $\lambda_{16}\|\nabla^2(a^L,b^L,\rho^L,u^L)\|_{L^2}$ to both sides of \eqref{G5.20} yields
\begin{align}\label{NJKKK5.24}
\frac{\mathrm{d}}{\mathrm{d}t}\mathcal{E}_1(t)+\lambda_{16}\mathcal{E}_1(t)\leq C
\|\nabla^2(a^L,b^L,\rho^L,u^L)\|^2.
\end{align}
Applying Gr\"{o}nwall’s inequality to \eqref{NJKKK5.24}, we derive that 
\begin{align}\label{G5.25}
\mathcal{E}_1(t)\leq e^{-\lambda_{16}t}\mathcal{E}_1(0)+C\int_{0}^te^{-\lambda_{16}(t-s)} \|\nabla^2U^L(s)\|_{\mathcal{Z}_2}^2\mathrm{d}s,  
\end{align}
for some $\lambda_{16}>0$.  From Lemma \ref{LA.7}, it holds
\begin{align*}
 \|\nabla^2 U^L(t)\|_{\mathcal{Z}_2}^2 \leq C\|\nabla^n U^L(t)\|_{\mathcal{Z}_2}^2,  
\end{align*}
for $n=0,1$.
This, together with \eqref{G5.25}, leads to
\begin{align}\label{G5.27}
\mathcal{E}_1(t)\leq e^{-\lambda_{16}t}\mathcal{E}_1(0)+C\int_{0}^te^{-\lambda_{16}(t-s)} \|\nabla^n U^L(s)\|_{\mathcal{Z}_2}^2\mathrm{d}s,  
\end{align}
for $n=0,1$.
Therefore, it is crucial to estimate $\|\nabla^n U^L(t)\|_{\mathcal{Z}_2}^2$. To
do this, we apply $\nabla^n\phi_0(D_x)$ to \eqref{G4.4} to obtain
\begin{align}\label{G5.28}
\nabla^n U^L(t)=\sum_{j=1}^7\nabla^n K^L_j(t), 
\end{align}
for $n=0,1$.
Here, $K_j(j=1,2,\dots,7)$ are defined in \eqref{G4.27}. 

Below we give some  estimates about the decay rates of solutions and their 
spatial derivatives. Define
\begin{align}\label{G5.29}
\mathcal{E}_{1,\infty}(t):=\sup_{0\leq s\leq t}\,&\bigg\{\sum_{n=0}^1(1+s)^{\frac{3}{4}+n}\Big(\|\nabla^n f\|_{L_v^2(L^2)}^2+\|\nabla^n(\rho,u)\|^2 \Big)\nonumber\\
&+(1+s)^{\frac{7}{4}}\Big(\|\nabla^2 f\|_{L_v^2(L^2)}^2+\|\nabla^2(\rho,u)\|^2 \Big)   \bigg\}.
\end{align}

Now we estimate $\nabla^nK^L_j(j=1,2, \dots, 7)$ in \eqref{G5.29} one by one. 
Taking $q= {4}/{3}$ in \eqref{G5.23} and utilizing Lemma \ref{LLA.4}, 
one has
\begin{align}\label{G5.30}
\|\nabla^n K^L_1(t)\|_{\mathcal{Z}_2}\leq C(1+t)^{-\frac{3}{8}-\frac{n}{2}}
\|U_0\|_{\mathcal{Z}_{\frac{4}{3}}}\leq C(1+t)^{-\frac{3}{8}-\frac{n}{2}}\|U_0\|_{\mathcal{Z}_1}^\frac{1}{2}\|U_0\|_{\mathcal{Z}_2}^\frac{1}{2},
\end{align}
for $n=0,1$.

Similarly, taking $q=1,m=0$  and $q={3}/{2}, m=n$ in \eqref{G5.23} respectively,
and then using Lemmas \ref{LA.1}--\ref{LA.2}, we obtain, for  $\nabla^nK_3^L$ and $\nabla^nK_4^L$, that 
\begin{align}\label{G5.31}
&\|\nabla^n K^L_3(t)\|_{\mathcal{Z}_2}+\|\nabla^n K^L_4(t)\|_{\mathcal{Z}_2}\nonumber\\
\leq\,&  C\int_0^{\frac{t}{2}}(1+t-s)^{-\frac{3}{4}-\frac{n}{2}}\|(1+\rho)u\cdot a v\sqrt{M}\|_{\mathcal{Z}_1}\mathrm{d}s\nonumber\\
&+ C\int_{\frac{t}{2}}^{t}(1+t-s)^{-\frac{1}{4}}\big\|\nabla^n\big((1+\rho)u\cdot a v\sqrt{M}\big)\big\|_{\mathcal{Z}_{\frac{3}{2}}}\mathrm{d}s\nonumber\\
&+C\int_{0}^{\frac{t}{2}}(1+t-s)^{-\frac{3}{4}-\frac{n}{2}}\|\rho(u-b)\cdot  v\sqrt{M}\|_{\mathcal{Z}_1}\mathrm{d}s\nonumber\\
&+C\int_{\frac{t}{2}}^{t}(1+t-s)^{-\frac{1}{4}}\big\|\nabla^n\big(\rho(u-b)\cdot  v\sqrt{M}\big)\big\|_{\mathcal{Z}_{\frac{3}{2}}}\mathrm{d}s\nonumber\\
\leq\,& C\int_0^{\frac{t}{2}}(1+t-s)^{-\frac{3}{4}-\frac{n}{2}}\big((1+\|\rho\|_{L^{\infty}})\|u\|\|a\|+\|\rho\|\|(u,b)\|\big) 
  \mathrm{d}s\nonumber\\
&+C\int_{\frac{t}{2}}^t(1+t-s)^{-\frac{1}{4}}(1+\|\rho\|_{L^{\infty}})(\|\nabla^n u\|\|a\|_{L^6}+\|u\|_{L^6}\|\nabla^n a\|)\mathrm{d}s\nonumber\\
&+C\int_{\frac{t}{2}}^t(1+t-s)^{-\frac{1}{4}}\|\nabla^n \rho\|\big(\|a\|_{L^6}\|u\|_{L^{\infty}}+\|(u,b)\|_{L^6}\big)\mathrm{d}s\nonumber\\
&+C\int_{\frac{t}{2}}^t(1+t-s)^{-\frac{1}{4}}\|\nabla^n(u-b)\|\|\rho\|_{L^6}\mathrm{d}s\nonumber\\
\leq\,& C(1+\delta)\int_0^{\frac{t}{2}}(1+t-s)^{-\frac{3}{4}-\frac{n}{2}}\|(a,b,\rho,u)\|^2 
  \mathrm{d}s\nonumber\\
&+C(1+\delta)\int_{\frac{t}{2}}^t(1+t-s)^{-\frac{1}{4}}\|\nabla^n(a,b,\rho,u)\|\|\nabla(a,b,\rho,u)\|_{L^2}\mathrm{d}s\nonumber\\
\leq\,& C\mathcal{E}_{1,\infty}(t)\int_0^{\frac{t}{2}}(1+t-s)^{-\frac{3}{4}-\frac{n}{2}}(1+s)^{-\frac{3}{4}}
  \mathrm{d}s\nonumber\\
&+ C\mathcal{E}_{1,\infty}(t)\int_{\frac{t}{2}}^{t}(1+t-s)^{-\frac{1}{4}}(1+s)^{-\frac{5}{4}-\frac{n}{2}}
  \mathrm{d}s\nonumber\\
\leq\,& C(1+t)^{-\frac{1}{2}-\frac{n}{2}}\mathcal{E}_{1,\infty}(t),
\end{align}
for $n=0,1$. 

For $\nabla^n K^L_6$ and $\nabla^n K^L_7$,  taking $q=1,m=0$ and $q={3}/{2}, m=0$  in \eqref{G5.23} respectively yields
\begin{align}\label{G5.32}
&\|\nabla^n K^L_6(t)\|_{\mathcal{Z}_2}+\|\nabla^n K^L_7(t)\|_{\mathcal{Z}_2}\nonumber\\
\leq \,&
C\int_0^{\frac{t}{2}}(1+t-s)^{-\frac{3}{4}-\frac{n}{2
}}\|S_\rho (s)\|_{\mathcal{Z}_1}\mathrm{d}s\nonumber\\
&+C\int_{\frac{t}{2}}^{t}(1+t-s)^{-\frac{1}{4}-\frac{n}{2}}\|S_\rho (s)\|_{\mathcal{Z}_{\frac{3}{2}}}\mathrm{d}s\nonumber\\
&+C\int_0^{\frac{t}{2}}(1+t-s)^{-\frac{3}{4}-\frac{n}{2}}\|S_u (s)\|_{\mathcal{Z}_1}\mathrm{d}s\nonumber\\
&+C\int_{\frac{t}{2}}^{t}(1+t-s)^{-\frac{1}{4}-\frac{n}{2}}\| S_u (s)\|_{\mathcal{Z}_{\frac{3}{2}}}\mathrm{d}s\nonumber\\
\leq\,&C\int_0^{\frac{t}{2}}(1+t-s)^{-\frac{3}{4}-\frac{n}{2}}\big(\|(\rho,u)\|\|\nabla (\rho,u)\|+\|\rho\|\|\nabla^2u\|+\|a\|\|u\|\big)\mathrm{d}s\nonumber\\
&+C\int_{\frac{t}{2}}^t(1+t-s)^{-\frac{1}{4}-\frac{n}{2}}\big(\|(\rho,u)\|_{L^6}\|\nabla(\rho,u)\|+\|\rho\|_{L^6}\|\nabla^2 u\|+\|a\|\|\nabla u\|\big)\mathrm{d}s\nonumber\\
\leq\,&C\mathcal{E}_{1,\infty}(t)\int_0^{\frac{t}{2}}(1+t-s)^{-\frac{3}{4}-\frac{n}{2}}(1+s)^{-\frac{3}{4}}\mathrm{d}s\nonumber\\
&+C\mathcal{E}_{1,\infty}(t)\int_{\frac{t}{2}}^t(1+t-s)^{-\frac{1}{4}-\frac{n}{2}}(1+s)^{-\frac{5}{4}}\mathrm{d}s\nonumber\\
\leq\,& C(1+t)^{-\frac{1}{2}-\frac{n}{2}}\mathcal{E}_{1,\infty}(t).
\end{align}
for $n=0,1$. 

Next, for the term $\nabla^n K^L_2(t)$, using $\mathbf{P}_0G=0$ and 
taking $q=1, m=0$ and $q= {3}/{2}, m=n$ in \eqref{G5.24} respectively, we have 
to compute
\begin{align}\label{G5.33}
\|\nabla^nK^L_2(t)\|_{\mathcal{Z}_2}^2\leq\,& C\int_0^{\frac{t}{2}}(1+t-s)^{-\frac{3}{2}-n}
\|(1+\rho)u\{\mathbf{I}-\mathbf{P}_0\}f\|_{\mathcal{Z}_1}^2\mathrm{d}s\nonumber\\
&+ C\int_{\frac{t}{2}}^t(1+t-s)^{-\frac{1}{2}}
\big\|\nabla^n\big((1+\rho)u\{\mathbf{I}-\mathbf{P}_0\}f\big)\big\|_{\mathcal{Z}_{\frac{3}{2}}}^2\mathrm{d}s\nonumber\\
\leq\,& C(1+\delta)\mathcal{E}_{1,\infty}^2(t)\int_0^{\frac{t}{2}}(1+t-s)^{-\frac{3}{2}-n}
(1+s)^{-\frac{3}{2}}\mathrm{d}s\nonumber\\
&+C(1+\delta)\mathcal{E}_{1,\infty}^2(t)\int_{\frac{t}{2}}^t(1+t-s)^{-\frac{1}{2}}(1+s)^{-\frac{5}{2}-n}\mathrm{d}s\nonumber\\
\leq\,&C(1+t)^{-\frac{3}{2}-n}\mathcal{E}_{1,\infty}^2(t),
\end{align}
for $n=0,1$. 

Finally, for the remaining term $\nabla^nK^L_5(t)$, it follows from \eqref{G3.47}, \eqref{G5.24} and  $\mathbf{P}h=0$
that
\begin{align}\label{G5.34}
\|\nabla^nK^L_5(t)\|_{\mathcal{Z}_2}^2   \leq\,& C\int_0^{\frac{t}{2}}(1+t-s)^{-\frac{3}{2}-n}
\|\nu^{-\frac{1}{2}}\rho\mathcal{L}\{\mathbf{I}-\mathbf{P}\}f\|_{\mathcal{Z}_1}^2\mathrm{d}s\nonumber\\
&+ C\int_{\frac{t}{2}}^{t}(1+t-s)^{-\frac{1}{2}-n}
\|\nu^{-\frac{1}{2}}\rho\mathcal{L}\{\mathbf{I}-\mathbf{P}\}f\|_{\mathcal{Z}_{\frac{3}{2}}}^2\mathrm{d}s\nonumber\\
\leq\,& C(1+t)^{-\frac{3}{2}-n}\mathcal{E}_{1,\infty}(t)\int_0^{\frac{t}{2}}
\mathcal{D}(s)\mathrm{d}s\nonumber\\
&+C(1+t)^{-\frac{7}{4}}\mathcal{E}_{1,\infty}(t)\int_{\frac{t}{2}}^t\mathcal{D}(s)\mathrm{d}s\nonumber\\
\leq\,& C\mathcal{E}(0)(1+t)^{-\frac{7}{4}}\mathcal{E}_{1,\infty}(t),
\end{align}
for $n=0,1$.

Putting \eqref{G5.30}--\eqref{G5.34} into \eqref{G5.28}, we consequently obtain that 
\begin{align*}
\|\nabla^n U^L(t)\|_{\mathcal{Z}_2}\leq C(1+t)^{-\frac{3}{8}-\frac{n}{2}}\Big(   \|U_0\|_{\mathcal{Z}_1}^{\frac{1}{2}}\|U_0\|_{\mathcal{Z}_2}^{\frac{1}{2}}
+\mathcal{E}^{\frac{1}{2}}(0)\mathcal{E}^{\frac{1}{2}}_{1,\infty}(t)+\mathcal{E}_{1,\infty}(t)\Big),  
\end{align*}
for $n=0,1$.
This,  together with \eqref{G5.27},  implies that
\begin{align}\label{G5.36}
\|\nabla^2 U\|_{\mathcal{Z}_2}^2\leq C(1+t)^{-\frac{7}{4}}\big(\|U_0\|_{\mathcal{H}^2}^2+\|U_0\|_{\mathcal{Z}_1}\|U_0\|_{\mathcal{Z}_2}
+\mathcal{E}(0)\mathcal{E}_{1,\infty}(t)+\mathcal{E}_{1,\infty}^2(t)    \big).
\end{align}
Leveraging Lemma \ref{LA.7} and \eqref{G5.36}, we get
\begin{align*}
\|\nabla^n U(t)\|_{\mathcal{Z}_2}^2\leq &\,C(\|\nabla^n U^L(t)\|_{\mathcal{Z}_2}^2+\|\nabla^n U^H(t)\|_{\mathcal{Z}_2}^2)\nonumber\\
\leq\,& C(\|\nabla^n U^L(t)\|_{\mathcal{Z}_2}^2+\|\nabla^2 U(t)\|_{\mathcal{Z}_2}^2)\nonumber\\
\leq\,&C(1+t)^{-\frac{3}{4}-n}\big(\|U_0\|_{\mathcal{H}^2}^2
+\|U_0\|_{\mathcal{Z}_1}\|U_0\|_{\mathcal{Z}_2}+\mathcal{E}(0)\mathcal{E}_{1,\infty}(t)
+\mathcal{E}_{1,\infty}^2(t)    \big),
\end{align*}
for $n=0,1$.
Combining \eqref{G5.36} with the definition of $\mathcal{E}_{1,\infty}$ in
\eqref{G5.29} leads to
\begin{align*}
\mathcal{E}_{1,\infty}(t)\leq C \big(\|U_0\|_{\mathcal{H}^2}^2+\|U_0\|_{\mathcal{Z}_1}\|U_0\|_{\mathcal{Z}_2}+\mathcal{E}(0)\mathcal{E}_{1,\infty}(t)
+\mathcal{E}_{1,\infty}^2(t)    \big).   
\end{align*}
Using Lemma \ref{LA.4}, the smallness of $\mathcal{E}(0)$ and $\|U_0\|_{\mathcal{H}^2}^2$, one arrives at
\begin{align*}
\mathcal{E}_{1,\infty}(t)\leq C \big(\|U_0\|_{\mathcal{H}^2}^2+\|U_0\|_{\mathcal{Z}_1}\|U_0\|_{\mathcal{Z}_2}   \big),
\end{align*}
which means that 
\begin{align}
\|f\|_{L_v^2(L^2)}+\|(\rho,u)\| \leq\,& C(1+t)^{-\frac{3}{8}}\Big(\|U_0\|_{\mathcal{H}^2}+\|U_0\|_{\mathcal{Z}_1}^{\frac{1}{2}}\|U_0\|_{\mathcal{Z}_2}^{\frac{1}{2}}   \Big),  \label{decay-a}\\
\|\nabla (f,\rho,u)\|_{\mathcal{H}^1}\leq\,& C(1+t)^{-\frac{7}{8}}\Big(\|U_0\|_{\mathcal{H}^2}+\|U_0\|_{\mathcal{Z}_1}^{\frac{1}{2}}\|U_0\|_{\mathcal{Z}_2}^{\frac{1}{2}}   \Big).\label{decay-b}
\end{align}

\subsection{Optimal time-decay rates of 
solutions and their gradients }

Notice that the decay rates given in \eqref{decay-a} and \eqref{decay-b} 
are slower than the  optimal decay rates of the linearized equations. 
In order to obtain better results stated in Theorem \ref{T1.2}, below we give more refined energy estimates  of $(f,\rho,u)$.
Define
\begin{align}\label{G5.41}
\mathcal{E}_{\infty}(t):=\sup_{0\leq s\leq t}&\bigg\{\sum_{n=0}^1(1+s)^{\frac{3}{2}+n}\big(\|\nabla^n f\|_{L_v^2(L^2)}^2+\|\nabla^n(\rho,u)\|^2 \big)\nonumber\\
&\,+(1+s)^{\frac{5}{2}}\big(\|\nabla^2 f\|_{L_v^2(L^2)}^2+\|\nabla^2(\rho,u)\|^2 \big)   \bigg\}.
\end{align}

Now we give the proof of Theorem \ref{T1.2}.  
    Taking $q=1,m=0$ in \eqref{G5.23}, we have
\begin{align*}
\|\nabla^n K^L_1(t)\|_{\mathcal{Z}_2}\leq C(1+t)^{-\frac{3}{4}-\frac{n}{2}}
\|U_0\|_{\mathcal{Z}_{1}},
\end{align*}
for $n=0,1,2$.
Taking  a similar way to \eqref{G5.31}, it is obvious, for $\nabla^n K^L_3$ and $\nabla^n K^L_4$, that 
\begin{align}\label{G5.43}
&\|\nabla^n K^L_3(t)\|_{\mathcal{Z}_2}+\|\nabla^n K^L_4(t)\|_{\mathcal{Z}_2}\nonumber\\
\leq\,& C(1+\delta)\int_0^{\frac{t}{2}}(1+t-s)^{-\frac{3}{4}-\frac{n}{2}}\|(a,b,\rho,u)\|^2 
  \mathrm{d}s\nonumber\\
&+C(1+\delta)\int_{\frac{t}{2}}^t(1+t-s)^{-\frac{1}{4}}\|\nabla^n(a,b,\rho,u)\|\|\nabla(a,b,\rho,u)\|_{L^2}\mathrm{d}s\nonumber\\
\leq\,& C\Big(\|U_0\|_{\mathcal{H}^2}+\|U_0\|_{\mathcal{Z}_1}^{\frac{1}{2}}\|U_0\|_{\mathcal{Z}_2}^{\frac{1}{2}}   \Big)\mathcal{E}^{\frac{1}{2}}_{\infty}(t)\int_0^{\frac{t}{2}}(1+t-s)^{-\frac{3}{4}-\frac{n}{2}}(1+s)^{-\frac{9}{8}}
  \mathrm{d}s\nonumber\\
&+ C\Big(\|U_0\|_{\mathcal{H}^2}+\|U_0\|_{\mathcal{Z}_1}^{\frac{1}{2}}\|U_0\|_{\mathcal{Z}_2}^{\frac{1}{2}}   \Big)\mathcal{E}^{\frac{1}{2}}_{\infty}(t)\int_{\frac{t}{2}}^{t}(1+t-s)^{-\frac{1}{4}}(1+s)^{-\frac{13}{8}-\frac{n}{2}}
  \mathrm{d}s\nonumber\\
\leq\,& C(1+t)^{-\frac{3}{4}-\frac{n}{2}}\Big(\|U_0\|_{\mathcal{H}^2}+\|U_0\|_{\mathcal{Z}_1}^{\frac{1}{2}}\|U_0\|_{\mathcal{Z}_2}^{\frac{1}{2}}   \Big)\mathcal{E}^{\frac{1}{2}}_{\infty}(t),
\end{align}
for $n=0,1,2$.

Similarly, for the terms $\nabla^n K^L_6$ and $\nabla^n K^L_7$ with   $n=0,1$, we have 
\begin{align}\label{G5.44}
&\|\nabla^n K^L_6(t)\|_{\mathcal{Z}_2}+\|\nabla^n K^L_7(t)\|_{\mathcal{Z}_2}\nonumber\\
\leq \,&
C\int_0^{\frac{t}{2}}(1+t-s)^{-\frac{3}{4}-\frac{n}{2
}}\|(S_\rho,S_u) (s)\|_{\mathcal{Z}_1}\mathrm{d}s\nonumber\\
&+C\int_{\frac{t}{2}}^{t}(1+t-s)^{-\frac{1}{4}-\frac{n}{2}}\|(S_\rho,S_u-au )(s)\|_{\mathcal{Z}_{\frac{3}{2}}}\mathrm{d}s\nonumber\\
&+C\int_{\frac{t}{2}}^{t}(1+t-s)^{-\frac{1}{4}}\|\nabla^n(au )(s)\|_{\mathcal{Z}_{\frac{3}{2}}}\mathrm{d}s\nonumber\\
\leq\,&C\int_0^{\frac{t}{2}}(1+t-s)^{-\frac{3}{4}-\frac{n}{2}}\big(\|(\rho,u)\|\|\nabla (\rho,u)\|+\|\rho\|\|\nabla^2u\|+\|a\|\|u\|\big)\mathrm{d}s\nonumber\\
&+C\int_{\frac{t}{2}}^t(1+t-s)^{-\frac{1}{4}-\frac{n}{2}}\big(\|(\rho,u)\|_{L^6}\|\nabla(\rho,u)\|+\|\rho\|_{L^6}\|\nabla^2 u\|\big)\mathrm{d}s\nonumber\\
&+C\int_{\frac{t}{2}}^t(1+t-s)^{-\frac{1}{4}}\big(\|\nabla^n a\|\|u\|_{L^6}+\|\nabla^n u\|\|a\|_{L^6}     \big)\mathrm{d}s\nonumber\\
\leq\,&C\Big(\|U_0\|_{\mathcal{H}^2}+\|U_0\|_{\mathcal{Z}_1}^{\frac{1}{2}}\|U_0\|_{\mathcal{Z}_2}^{\frac{1}{2}}   \Big)\mathcal{E}^{\frac{1}{2}}_{\infty}(t)\int_0^{\frac{t}{2}}(1+t-s)^{-\frac{3}{4}-\frac{n}{2}}(1+s)^{-\frac{9}{8}}
  \mathrm{d}s\nonumber\\
&+C\Big(\|U_0\|_{\mathcal{H}^2}+\|U_0\|_{\mathcal{Z}_1}^{\frac{1}{2}}\|U_0\|_{\mathcal{Z}_2}^{\frac{1}{2}}   \Big)\mathcal{E}^{\frac{1}{2}}_{\infty}(t)\int_{\frac{t}{2}}^t(1+t-s)^{-\frac{1}{4}-\frac{n}{2}}(1+s)^{-\frac{17}{8}}
  \mathrm{d}s\nonumber\\
&+C\Big(\|U_0\|_{\mathcal{H}^2}+\|U_0\|_{\mathcal{Z}_1}^{\frac{1}{2}}\|U_0\|_{\mathcal{Z}_2}^{\frac{1}{2}}   \Big)\mathcal{E}^{\frac{1}{2}}_{\infty}(t)\int_{\frac{t}{2}}^t(1+t-s)^{-\frac{1}{4}}(1+s)^{-\frac{13}{8}-\frac{n}{2}}
  \mathrm{d}s\nonumber\\
\leq\,& C(1+t)^{-\frac{3}{4}-\frac{n}{2}}\Big(\|U_0\|_{\mathcal{H}^2}+\|U_0\|_{\mathcal{Z}_1}^{\frac{1}{2}}\|U_0\|_{\mathcal{Z}_2}^{\frac{1}{2}}   \Big)\mathcal{E}^{\frac{1}{2}}_{\infty}(t). 
\end{align}
For the case  $n=2$,  $ \nabla^n K^L_6$ and $\nabla^n K^L_7$ can be estimated as 
and 
\begin{align}\label{G5.45}
&\|\nabla^2 K^L_6(t)\|_{\mathcal{Z}_2}+\|\nabla^2 K^L_7(t)\|_{\mathcal{Z}_2}\nonumber\\
\leq \,&
C\int_0^{\frac{t}{2}}(1+t-s)^{-\frac{3}{4}-1}\|(S_\rho,S_u) (s)\|_{\mathcal{Z}_1}\mathrm{d}s\nonumber\\
&+C\int_{\frac{t}{2}}^{t}(1+t-s)^{-\frac{1}{4}-1}\|(S_\rho,S_u-au )(s)\|_{\mathcal{Z}_{\frac{3}{2}}}\mathrm{d}s\nonumber\\
&+C\int_{\frac{t}{2}}^{t}(1+t-s)^{-\frac{1}{4}}\|\nabla (au )(s)\|_{\mathcal{Z}_{\frac{3}{2}}}\mathrm{d}s\nonumber\\
\leq\,&C\int_0^{\frac{t}{2}}(1+t-s)^{-\frac{3}{4}-1}\big(\|(\rho,u)\|\|\nabla (\rho,u)\|+\|\rho\|\|\nabla^2u\|+\|a\|\|u\|\big)\mathrm{d}s\nonumber\\
&+C\int_{\frac{t}{2}}^t(1+t-s)^{-\frac{1}{4}-1}\big(\|(\rho,u)\|_{L^6}\|\nabla(\rho,u)\|+\|\rho\|_{L^6}\|\nabla^2 u\|\big)\mathrm{d}s\nonumber\\
&+C\int_{\frac{t}{2}}^t(1+t-s)^{-\frac{3}{4}}\big(\|\nabla  a\|\|u\|_{L^6}+\|\nabla^{n-1} u\|\|a\|_{L^6}     \big)\mathrm{d}s\nonumber\\
\leq\,&C\Big(\|U_0\|_{\mathcal{H}^2}+\|U_0\|_{\mathcal{Z}_1}^{\frac{1}{2}}\|U_0\|_{\mathcal{Z}_2}^{\frac{1}{2}}   \Big)\mathcal{E}^{\frac{1}{2}}_{\infty}(t)\int_0^{\frac{t}{2}}(1+t-s)^{-\frac{3}{4}-1}(1+s)^{-\frac{9}{8}}
  \mathrm{d}s\nonumber\\
&+C\Big(\|U_0\|_{\mathcal{H}^2}+\|U_0\|_{\mathcal{Z}_1}^{\frac{1}{2}}\|U_0\|_{\mathcal{Z}_2}^{\frac{1}{2}}   \Big)\mathcal{E}^{\frac{1}{2}}_{\infty}(t)\int_{\frac{t}{2}}^t(1+t-s)^{-\frac{1}{4}-1}(1+s)^{-\frac{17}{8}}
  \mathrm{d}s\nonumber\\
&+C\Big(\|U_0\|_{\mathcal{H}^2}+\|U_0\|_{\mathcal{Z}_1}^{\frac{1}{2}}\|U_0\|_{\mathcal{Z}_2}^{\frac{1}{2}}   \Big)\mathcal{E}^{\frac{1}{2}}_{\infty}(t)\int_{\frac{t}{2}}^t(1+t-s)^{-\frac{1}{4}}(1+s)^{-\frac{9}{8}-1}
  \mathrm{d}s\nonumber\\
\leq\,& C(1+t)^{-\frac{3}{4}-1}\Big(\|U_0\|_{\mathcal{H}^2}+\|U_0\|_{\mathcal{Z}_1}^{\frac{1}{2}}\|U_0\|_{\mathcal{Z}_2}^{\frac{1}{2}}   \Big)\mathcal{E}^{\frac{1}{2}}_{\infty}(t).
\end{align} 

Thus, from \eqref{G5.43}--\eqref{G5.45}, it holds
\begin{align*}
&\|\nabla^n K^L_6(t)\|_{\mathcal{Z}_2}+\|\nabla^n K^L_7(t)\|_{\mathcal{Z}_2} \leq C(1+t)^{-\frac{3}{4}-\frac{n}{2}}\Big(\|U_0\|_{\mathcal{H}^2}+\|U_0\|_{\mathcal{Z}_1}^{\frac{1}{2}}\|U_0\|_{\mathcal{Z}_2}^{\frac{1}{2}}   \Big)\mathcal{E}^{\frac{1}{2}}_{\infty}(t),   
\end{align*}
for $n=0,1,2$.

Similar to \eqref{G5.33} and \eqref{G5.34}, we have
\begin{align*}
\|\nabla^nK^L_2(t)\|_{\mathcal{Z}_2}^2\leq\,& C\int_0^{\frac{t}{2}}(1+t-s)^{-\frac{3}{2}-n}
\|(1+\rho)u\{\mathbf{I}-\mathbf{P}_0\}f\|_{\mathcal{Z}_1}^2\mathrm{d}s\nonumber\\
&+ C\int_{\frac{t}{2}}^t(1+t-s)^{-\frac{1}{2}}
\big\|\nabla^n\big((1+\rho)u\{\mathbf{I}-\mathbf{P}_0\}f\big)\big\|_{\mathcal{Z}_{\frac{3}{2}}}^2\mathrm{d}s\nonumber\\
\leq\,&C(1+t)^{-\frac{3}{2}-n}\Big(\|U_0\|_{\mathcal{H}^2}^2+\|U_0\|_{\mathcal{Z}_1}\|U_0\|_{\mathcal{Z}_2}   \Big)\mathcal{E}_{\infty}(t),  
\end{align*}
for $n=0,1,2$, and
\begin{align*}
\|\nabla^nK^L_5(t)\|_{\mathcal{Z}_2}^2   \leq\,& C\int_0^{\frac{t}{2}}(1+t-s)^{-\frac{3}{2}-n}
\|\nu^{-\frac{1}{2}}\rho\mathcal{L}\{\mathbf{I}-\mathbf{P}\}f\|_{\mathcal{Z}_1}^2\mathrm{d}s\nonumber\\
&+ C\int_{\frac{t}{2}}^{t}(1+t-s)^{-\frac{1}{2}-n}
\|\nu^{-\frac{1}{2}}\rho\mathcal{L}\{\mathbf{I}-\mathbf{P}\}f\|_{\mathcal{Z}_{\frac{3}{2}}}^2\mathrm{d}s\nonumber\\
\leq\,& C(1+t)^{-\frac{3}{2}-n}\Big(\|U_0\|_{\mathcal{H}^2}+\|U_0\|_{\mathcal{Z}_1}^{\frac{1}{2}}\|U_0\|_{\mathcal{Z}_2}^{\frac{1}{2}}   \Big)\mathcal{E}^{\frac{1}{2}}_{\infty}(t)\int_0^{\frac{t}{2}}
\mathcal{D}(s)\mathrm{d}s\nonumber\\
&+C(1+t)^{-\frac{5}{2}}\mathcal{E}_{\infty}(t)\int_{\frac{t}{2}}^t\mathcal{D}(s)\mathrm{d}s\nonumber\\
\leq\,& C\mathcal{E}(0)(1+t)^{-\frac{5}{2}}\mathcal{E}_{\infty}(t),
\end{align*}
for $n=0,1,2$.
As a result, we obtain
\begin{align}\label{G5.49}
\|\nabla^n U^L(t)\|_{\mathcal{Z}_2}\leq C(1+t)^{-\frac{3}{4}-\frac{n}{2}} \Big(\|U_0\|_{\mathcal{H}^2}+\|U_0\|_{\mathcal{Z}_1}^{\frac{1}{2}}\|U_0\|_{\mathcal{Z}_2}^{\frac{1}{2}}+\mathcal{E}^{\frac{1}{2}}(0)   \Big)\mathcal{E}^{\frac{1}{2}}_{\infty}(t),   
\end{align}
for $n=0,1$, and
\begin{align}\label{G5.50}
\|\nabla^n U^L(t)\|_{\mathcal{Z}_2}\leq C(1+t)^{-\frac{5}{4}} \Big(\|U_0\|_{\mathcal{H}^2}+\|U_0\|_{\mathcal{Z}_1}^{\frac{1}{2}}\|U_0\|_{\mathcal{Z}_2}^{\frac{1}{2}}+\mathcal{E}^{\frac{1}{2}}(0)   \Big)\mathcal{E}^{\frac{1}{2}}_{\infty}(t),   
\end{align}
for $n=2$.
Combining \eqref{G5.49} and \eqref{G5.50} with \eqref{G5.27}, we can infer that
\begin{align}\label{G5.51}
\|\nabla^2U(t)\|_{\mathcal{Z}_2}^2\leq C(1+t)^{-\frac{5}{2}} \Big\{\Big(\|U_0\|_{\mathcal{H}^2}^2+\|U_0\|_{\mathcal{Z}_1}\|U_0\|_{\mathcal{Z}_2}+\mathcal{E}(0)    \Big)\mathcal{E}_{\infty}(t)+\|U_0\|_{\mathcal{H}^2\cap{\mathcal{Z}_1}}^2  \Big\}.
\end{align}
Making use of Lemma \ref{LA.7} and \eqref{G5.51} yields
\begin{align*}
\|\nabla^n U(t)\|_{\mathcal{Z}_2}^2\leq \,&C(\|\nabla^n U^L(t)\|_{\mathcal{Z}_2}^2+\|\nabla^n U^H(t)\|_{\mathcal{Z}_2}^2)\nonumber\\
\leq\,& C(\|\nabla^n U^L(t)\|_{\mathcal{Z}_2}^2+\|\nabla^2 U(t)\|_{\mathcal{Z}_2}^2)\nonumber\\
\leq\,&C(1+t)^{-\frac{3}{2}-n}\Big\{\big(\|U_0\|_{\mathcal{H}^2}^2+\|U_0\|_{\mathcal{Z}_1}\|U_0\|_{\mathcal{Z}_2}
+\mathcal{E}(0)    \big)\mathcal{E}_{\infty}(t)+\|U_0\|_{\mathcal{H}^2\cap{\mathcal{Z}_1}}^2  \Big\},
\end{align*}
for $n=0,1$.
This, together with the definition of $\mathcal{E}_{\infty}(t)$ in \eqref{G5.41}, leads to
\begin{align*}
\mathcal{E}_{\infty}(t)\leq C \Big\{\big(\|U_0\|_{\mathcal{H}^2}^2+\|U_0\|_{\mathcal{Z}_1}\|U_0\|_{\mathcal{Z}_2}+\mathcal{E}(0)    \big)\mathcal{E}_{\infty}(t)+\|U_0\|_{\mathcal{H}^2\cap{\mathcal{Z}_1}}^2  \Big\}.   
\end{align*}
Thanks to the smallness of $ \|U_0\|_{\mathcal{H}^2}^2+\|U_0\|_{\mathcal{Z}_1}\|U_0\|_{\mathcal{Z}_2}+\mathcal{E}(0)  $, we have
\begin{align*}
\mathcal{E}_{\infty}(t)\leq C \|U_0\|_{\mathcal{H}^2\cap{\mathcal{Z}_1}}^2,   
\end{align*}
which ensures that \eqref{G1.8}--\eqref{G1.10} hold.

Based on the decay rates obtained in \eqref{G1.8}--\eqref{G1.10}, we can further investigate  
the  time-decay rates of strong solutions in $L^p$ space with  $p\in [2,+\infty]$. 
In fact, by \eqref{G1.8}--\eqref{G1.9}, we notice that  
\begin{align}
 \|(\rho,u)\|_{L^6}\leq \,& C\|\nabla(\rho,u)\|_{L^2}\leq C(1+t)^{-\frac{5}{4}},\label{A1}  \\
 \| f\|_{L_v^2(L_x^{6})}\leq \,& C \|\nabla f\|_{L_v^2(L_x^{2})}\leq C(1+t)^{-\frac{5}{4}},  \label{A3}   \\
 \|(\rho,u)\|_{L^2}\leq \,& C(1+t)^{-\frac{3}{4}},  \label{A4}\\
 \|f\|_{L_v^2(L_x^{2})}\leq \,& C(1+t)^{-\frac{3}{4}}.\label{A2}
\end{align}

For $p\in [2,6]$, using the interpolation inequality, it follows from \eqref{A1}--\eqref{A2} that
\begin{align*}
 \|(\rho,u)\|_{L^p}\leq \,& C\|(\rho,u)\|_{L^2}^{\zeta}\|(\rho,u)\|_{L^6}^{{1-\zeta}}\leq C(1+t)^{-\frac{3}{2}(1-\frac{1}{p})} , \\
 \|f\|_{L_v^2(L_x^{p})}\leq \,& C\|f\|_{L_v^2(L_x^{2})}^{\zeta}\|f\|_{L_v^2(L_x^{6})}^{{1-\zeta}}\leq C(1+t)^{-\frac{3}{2}(1-\frac{1}{p})},
\end{align*}
where $\zeta= ({6-p})/ {2p} \in [0,1]$.

By  Gagliardo-Nierenberg's inequality, we get
\begin{align*}
&\|(\rho,u)\|_{L^{\infty}}\leq C\|\nabla(\rho,u)\|_{H^{1}}\leq C(1+t)^{-\frac{5}{4}} , \\
&\|f\|_{L_v^2(L_x^{\infty})}\leq C\|\nabla f\|_{L_v^2(H_x^{1})}\leq C(1+t)^{-\frac{5}{4}}.
\end{align*}
For $p\in [6,\infty]$, using Lemma \ref{LLA.4}, it holds
\begin{align*}
\|(\rho,u)\|_{L^p}&\,\leq C\|(\rho,u)\|_{L^6}^{\zeta^{\prime}}\|(\rho,u)\|_{L^{\infty}}^{{1-\zeta^{\prime}}}\leq C(1+t)^{-\frac{5}{4}} , \\
\|f\|_{L_v^2(L_x^{p})}&\,\leq C\|f\|_{L_v^2(L_x^{6})}^{\zeta^{\prime}}\|f\|_{L_v^2(L_x^{\infty})}^{{1-\zeta^{\prime}}}\leq C(1+t)^{-\frac{5}{4}},
\end{align*}
where $\zeta^{\prime}= {6}/{p} \in [0,1]$.
This ensures \eqref{G1.11} and \eqref{G1.12} hold. Hence, we complete the proof of Theorem \ref{T1.2}.
\hfill $\square$

\begin{rem}
The term  $\rho\mathcal{L}$ appearing in $K_5$ in \eqref{G4.27}
brings us some  difficulties in the above 
  estimates. The reason is that this term $K_5$ causes energy losses, which 
  prevent us from obtaining   the optimal decay rates for
the second  order derivative of $(f,\rho,u)$. 
This is quite different from the results in \cite{Ww-CMS-2024,Li-Ni-Wu}
for the density-independent friction force case \eqref{fo-aaaa}.
\end{rem}

\section{The periodic domain case}

In this section, we deal with the Cauchy problem \eqref{I-3}--\eqref{I-6} in the spatial periodic domain $\mathbb{T}^3$.  We have

\begin{thm}\label{T1.3} 
Assume that  the initial data  $(f_0, \rho_0, u_0)$
satisfy   that $F_0=M+$ $\sqrt{M} f_0 \geq 0$, 
  $\|f_0\|_{H_{x, v}^2}+\|(\rho_0, u_0)\|_{H^2}$ is sufficient small, and
\begin{gather*}
  \int_{\mathbb{T}^3} a_0 \, \mathrm{d} x=0, \quad \int_{\mathbb{T}^3} \rho_0 \, \mathrm{d} x=0, \quad 
  \int_{\mathbb{T}^3}\big(b_0+(1+\rho_0) u_0\big) \mathrm{d} x=0, 
\end{gather*}
where
\begin{align*}
a_0=\int_{\mathbb{T}^3} \sqrt{M} f_0(x, v) \mathrm{d} v,\ \  b_0=\int_{\mathbb{T}^3} v \sqrt{M} f_0(x, v) \mathrm{d} v.
\end{align*}
Then, the Cauchy problem \eqref{I-3}--\eqref{I-6}  admits a unique global strong solution $(f, \rho, u)$ satisfying $F=M+\sqrt{M} f \geq 0$, and
\begin{gather*}
  f \in C([0, \infty) ; H_{x,v}^2), \quad\rho, u \in C([0, \infty) ; H^2 ), \\
  \|f(t)\|_{H_{x, v}^2}+\|(\rho, u)(t)\|_{H^2} \leq C\big(\|f_0\|_{H_{x, v}^2}+\|(\rho_0, u_0)\|_{H^2}\big) e^{-\zeta_0 t}, 
\end{gather*}
with some constant $\zeta_0>0$, for any $t \geq 0$.
\end{thm}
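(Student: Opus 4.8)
The plan is to run the two-stage argument of Sections 3--5 once more, but now exploiting the fact that on the compact manifold $\mathbb{T}^3$ the dissipation rate $\mathcal{D}(t)$ of \eqref{G3.45} \emph{controls} the full energy $\mathcal{E}(t)$, so that the basic inequality \eqref{G3.46} self-improves to exponential decay. First I would verify that Lemmas \ref{L3.1}--\ref{L3.5} and hence \eqref{G3.46} remain valid on $\mathbb{T}^3$: the only tools used there are Sobolev embeddings, Gagliardo--Nirenberg and Hölder inequalities, all available on the torus; the only point of care is that $\|g\|_{L^6}\le C\|\nabla g\|$ must be replaced by $\|g\|_{L^6}\le C\|g\|_{H^1}$, which introduces harmless extra lower-order terms of the form $C\delta\|(a,b,\rho,u)\|^2$ on the right of \eqref{G3.46} that will be absorbed below. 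The local existence and the continuation argument are identical to the whole-space case, so global existence of $(f,\rho,u)$ follows once the a priori bound is closed.

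The new ingredient is a triple of conservation laws. Integrating \eqref{I-4} (equivalently $\partial_t n+\mathrm{div}(nu)=0$ with $n=1+\rho$) over $\mathbb{T}^3$ gives $\frac{\mathrm d}{\mathrm dt}\int_{\mathbb{T}^3}\rho\,\mathrm dx=0$; integrating \eqref{G3.17}$_1$ gives $\frac{\mathrm d}{\mathrm dt}\int_{\mathbb{T}^3}a\,\mathrm dx=0$; and adding the integral over $\mathbb{T}^3$ of the $b$-equation \eqref{G3.17}$_2$ to that of the fluid momentum equation \eqref{I-1}$_3$ (written for $nu$), the two friction contributions cancel exactly, yielding $\frac{\mathrm d}{\mathrm dt}\int_{\mathbb{T}^3}\big(b+(1+\rho)u\big)\,\mathrm dx=0$. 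Under the hypotheses of Theorem \ref{T1.3} these three integrals therefore vanish for all $t\ge0$.

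From the first two, Poincaré's inequality on $\mathbb{T}^3$ gives $\|\rho\|\le C\|\nabla\rho\|$ and $\|a\|\le C\|\nabla a\|$. For $u$ and $b$, write $u=\bar u+(u-\bar u)$ with $\bar u$ the mean value; the momentum conservation law yields $\int_{\mathbb{T}^3}u\,\mathrm dx=-\tfrac12\int_{\mathbb{T}^3}(b-u)\,\mathrm dx-\tfrac12\int_{\mathbb{T}^3}\rho u\,\mathrm dx$, hence $|\bar u|\le C\|b-u\|+C\|\rho\|\,\|u\|\le C\|b-u\|+C\delta\|u\|$, and combining with $\|u-\bar u\|\le C\|\nabla u\|$ and absorbing the $C\delta\|u\|$ term for $\delta$ small gives $\|u\|\le C(\|\nabla u\|+\|b-u\|)$ and then $\|b\|\le C(\|\nabla u\|+\|b-u\|)$. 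Together with the micro bound $\|\{\mathbf I-\mathbf P\}f\|_{L^2_v(H^2)}^2\le C\mathcal D(t)$ that is already part of $\mathcal D$, and the fact (used already in Section 4) that $\mathcal E_0(t)$, $\sum_{|\alpha|\le1}\int_{\mathbb{R}^3}\partial^\alpha u\cdot\partial^\alpha\nabla\rho\,\mathrm dx$ and the mixed-derivative functional in \eqref{G3.45} are all dominated by $\|(f,\rho,u)\|_{\mathcal H^2}^2$, one concludes $\mathcal E(t)\le C\mathcal D(t)$ on $\mathbb{T}^3$; in particular the extra terms $C\delta\|(a,b,\rho,u)\|^2$ produced in the previous paragraph are absorbed into $\lambda_7\mathcal D(t)$ after a further shrinking of $\delta$.

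It then follows from \eqref{G3.46} that $\frac{\mathrm d}{\mathrm dt}\mathcal E(t)+\zeta_1\mathcal E(t)\le0$ for some $\zeta_1>0$, whence $\mathcal E(t)\le e^{-\zeta_1 t}\mathcal E(0)$ by Gronwall's inequality. Since $\mathcal E(t)\backsim\|f(t)\|_{H^2_{x,v}}^2+\|(\rho,u)(t)\|_{H^2}^2$ and $\mathcal E(0)\backsim\|f_0\|_{H^2_{x,v}}^2+\|(\rho_0,u_0)\|_{H^2}^2$, this gives the claimed exponential decay with $\zeta_0=\zeta_1/2$, and positivity $F=M+\sqrt M f\ge0$ is preserved by the maximum principle as before. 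I expect the main obstacle to be the Poincaré step for $(u,b)$: their combined mean is only quadratically small rather than exactly zero, so smallness of $\delta$ must be used to absorb $C\delta\|u\|$, and one must simultaneously keep track that the Section 3 energy estimates still close on $\mathbb{T}^3$ in the absence of homogeneous Sobolev inequalities — both points being resolved by the three conservation laws.
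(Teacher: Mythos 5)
Your proposal is correct and follows essentially the same route as the paper: derive the three conservation laws for $a$, $\rho$, and $b+(1+\rho)u$ on $\mathbb{T}^3$, use Poincar\'e-type inequalities (plus smallness to absorb the quadratic term $\rho u$) to show the dissipation $\mathcal{D}(t)$ controls the full energy $\mathcal{E}(t)$, and then upgrade \eqref{G3.46} to $\frac{\mathrm{d}}{\mathrm{d}t}\mathcal{E}+\lambda\mathcal{E}\le 0$ and conclude by Gronwall. Your handling of the mean of $u$ via $\bar u$ rather than applying Poincar\'e directly to $b+(1+\rho)u$ is only a cosmetic variation, and your explicit remark about replacing $\|g\|_{L^6}\le C\|\nabla g\|$ by $\|g\|_{H^1}$ on the torus is a sound (indeed slightly more careful) treatment of a point the paper passes over.
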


\begin{proof}

First, by a direct calculation,  
the following conservation laws can be obtained from \eqref{I-1}:
\begin{gather*}
 \frac{\mathrm{d}}{\mathrm{d} t} \iint_{\mathbb{R}^3\times\mathbb{T}^3} F \mathrm{d} x \mathrm{d} v=0,\quad 
 \frac{\mathrm{d}}{\mathrm{d} t} \int_{\mathbb{T}^3} n \mathrm{d} x=0,\\
  \frac{\mathrm{d}}{\mathrm{d} t}\left(\int_{\mathbb{T}^3} n u \mathrm{d} x+\iint_{\mathbb{R}^3\times\mathbb{T}^3} v F \mathrm{d} x \mathrm{d} v\right)=0.
\end{gather*}
Then, according to the assumption in Theorem \ref{T1.3}, we derive that 
\begin{align}\label{G6.1}
   \int_{\mathbb{T}^3} a \mathrm{d} x=0, \quad 
   \int_{\mathbb{T}^3} \rho \mathrm{d} x=0, \quad 
   \int_{\mathbb{T}^3}\big(b+(1+\rho) u\big) \mathrm{d} x=0, 
\end{align}
for any $t\geq 0$.


Due to the fact that the proof of local existence and uniqueness of $(f,\rho,u)$ in 
  $ \mathbb{T}^3$ is similar to that in the
whole space $ \mathbb{R}^3$  case, we only show the proof of  the global a priori 
estimates here. From the Poincaré's inequality and \eqref{G6.1}, one gets
\begin{align}\label{G6.2}
\|a\|_{L^2}\leq\,& C\|\nabla a\|_{L^2},  \nonumber\\
\|\rho\|_{L^2}\leq\,& C\|\nabla \rho\|_{L^2},\nonumber\\
\|u+b\|_{L^2}\leq\,&C\|b+\rho(1+u)\|_{L^2}+\|\rho u\|_{L^2}\nonumber\\
\leq\,&C\big\|\nabla\big(  b+\rho(1+u)          \big)\big\|_{L^2}+\|\rho\|_{L^3}\|u\|_{L^{6}}\nonumber\\
\leq\,&C\|\nabla(b,u)\|_{L^2}+C\|\nabla u\|_{L^3}\|\rho\|_{L^6}+C\|u\|_{L^{\infty}}\|\nabla\rho\|_{L^2}+\|\rho\|_{H^1}\|\nabla u\|_{L^2}\nonumber\\
\leq\,& C\|\nabla(b,u,\rho)\|_{L^2}.
\end{align}
Thus, we deduce from \eqref{G6.2} and basic triangle inequality that
\begin{align}
\|u\|_{L^2}\leq\,& C\|u-b\|_{L^2}+C\|u+b\|_{L^2}\leq C\|u-b\|_{L^2}+C\|\nabla(b,u,\rho)\|_{L^2},\label{G6.22}\\
\|b\|_{L^2}\leq\,& C\|u-b\|_{L^2}+\|u\|_{L^2}\leq C\|u-b\|_{L^2}+C\|\nabla(b,u,\rho)\|_{L^2}.\label{G6.3}
\end{align}
Now, {let us} define the energy functional $\mathcal{E}(t)$ and the corresponding dissipation rate functional $\mathcal{D}(t)$ on $\Omega = \mathbb{T}^3$, exactly as in the case $\Omega = \mathbb{R}^3$ (see \eqref{G3.45}). And we also define 
\begin{align}\label{G6.4}
\mathcal{D}_{\mathbb{T}}(t):=\mathcal{D}(t)+\|(a,b,\rho,u)\|_{L^2}.    
\end{align}
Applying 
the similar process on  getting  \eqref{G3.46} to the $\mathbb{T}^3$   case, then combining
\eqref{G6.2}--\eqref{G6.4}, and using the fact that $\mathcal{E}(t)\leq C\mathcal{D}_{\mathbb{T}}(t)$, we eventually obtain
\begin{align}\label{G6.5}
\frac{{\rm d}}{{\rm d}t}\mathcal{E}(t)+\lambda \mathcal{E}(t)\leq 0,
\end{align}
for some $\lambda>0$.
Then, from Gronwall's inequality, it holds
\begin{align*}
\mathcal{E}(t)\leq e^{-\lambda t}\mathcal{E}(0),
\end{align*}
which leads to the exponential decay rate of $\mathcal{E}(t)$. Noticing that $\mathcal{E}(t)  \backsim\|(f
,\rho,u)\|_{\mathcal{H}^2}^2$ in time, hence, we complete the proof 
of Theorem \ref{T1.3}.
\end{proof}

\appendix

\section{some useful lemmas}
Below we list  some useful lemmas which will be used throughout in this paper.

\begin{lem} [{See \cite[Lemma 2.1]{CDM-krm-2011}}]\label{LA.1}   
There exists $a$ positive constant C 
such that for any $g,h\in H^2(\mathbb{R}^3)$ and any multi-index $\alpha$  with $1\leq|\alpha|\leq2$, it holds
\begin{align*}
\|g\|_{L^{\infty}(\mathbb{R}^{3})}& \leq C\|\nabla_x g\|_{L^{2}(\mathbb{R}^{3})}^{1/2}
\|\nabla_x^{2}g\|_{L^{2}(\mathbb{R}^{3})}^{1/2}, \\
\|gh\|_{H^{1}(\mathbb{R}^{3})}& \leq C\|g\|_{H^{1}(\mathbb{R}^{3})}\|\nabla_x h\|_{H^{1}(\mathbb{R}^{3})}, \\
\|\partial_x^{\alpha}(gh)\|_{L^{2}(\mathbb{R}^{3})}
& \leq C\|\nabla_x g\|_{H^{1}(\mathbb{R}^{3})}\|\nabla_x h\|_{H^{1}(\mathbb{R}^{3})}. 
\end{align*}
\end{lem}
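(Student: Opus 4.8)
The plan is to reduce all three inequalities to the first (Gagliardo--Nirenberg) bound, and to prove that one by a frequency splitting on the Fourier side. For the first bound I would start from $\|g\|_{L^\infty(\mathbb{R}^3)}\le C\|\widehat g\|_{L^1(\mathbb{R}^3)}$ (Fourier inversion together with Hausdorff--Young), and split $\int_{\mathbb{R}^3}|\widehat g(\xi)|\,\dd\xi=\int_{|\xi|\le R}+\int_{|\xi|>R}$ for a parameter $R>0$ to be chosen. On the low-frequency region I write $|\widehat g(\xi)|=|\xi|^{-1}\,|\xi|\,|\widehat g(\xi)|\le|\xi|^{-1}|\widehat{\nabla g}(\xi)|$ and apply Cauchy--Schwarz together with Plancherel to get $\int_{|\xi|\le R}|\widehat g|\,\dd\xi\le\big(\int_{|\xi|\le R}|\xi|^{-2}\dd\xi\big)^{1/2}\|\nabla g\|_{L^2}\le CR^{1/2}\|\nabla g\|_{L^2}$; on the high-frequency region I write $|\widehat g(\xi)|\le|\xi|^{-2}|\widehat{\nabla^2 g}(\xi)|$ and likewise $\int_{|\xi|>R}|\widehat g|\,\dd\xi\le\big(\int_{|\xi|>R}|\xi|^{-4}\dd\xi\big)^{1/2}\|\nabla^2 g\|_{L^2}\le CR^{-1/2}\|\nabla^2 g\|_{L^2}$. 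Choosing $R=\|\nabla^2 g\|_{L^2}/\|\nabla g\|_{L^2}$ then yields $\|g\|_{L^\infty}\le C\|\nabla g\|_{L^2}^{1/2}\|\nabla^2 g\|_{L^2}^{1/2}$.

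With the first inequality in hand, the second follows from the Leibniz rule, Hölder's inequality, and the Sobolev embeddings $\dot H^1(\mathbb{R}^3)\hookrightarrow L^6$ and $H^1(\mathbb{R}^3)\hookrightarrow L^3$. Indeed $\|gh\|_{L^2}\le\|g\|_{L^2}\|h\|_{L^\infty}\le C\|g\|_{H^1}\|\nabla h\|_{H^1}$ by the first bound, while $\|\nabla(gh)\|_{L^2}\le\|(\nabla g)h\|_{L^2}+\|g\,\nabla h\|_{L^2}\le\|\nabla g\|_{L^2}\|h\|_{L^\infty}+\|g\|_{L^6}\|\nabla h\|_{L^3}\le C\|g\|_{H^1}\|\nabla h\|_{H^1}$; adding the two gives $\|gh\|_{H^1}\le C\|g\|_{H^1}\|\nabla h\|_{H^1}$.

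For the third inequality I would expand $\partial^\alpha(gh)$ by the Leibniz rule. When $|\alpha|=1$ there are two terms, $(\partial^\alpha g)h$ and $g\,\partial^\alpha h$, each handled exactly as in the second inequality. When $|\alpha|=2$ there are three types of terms: $(\partial^\alpha g)h$, bounded by $\|\nabla^2 g\|_{L^2}\|h\|_{L^\infty}$; the cross terms $(\partial_i g)(\partial_j h)$, bounded by $\|\nabla g\|_{L^4}\|\nabla h\|_{L^4}\le C\|\nabla g\|_{H^1}\|\nabla h\|_{H^1}$ via $H^1(\mathbb{R}^3)\hookrightarrow L^4$; and $g\,\partial^\alpha h$, bounded by $\|g\|_{L^\infty}\|\nabla^2 h\|_{L^2}$. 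Since the hypothesis $\nabla g,\nabla h\in H^1$ forces $g,h\in H^2\hookrightarrow L^\infty$, applying the first inequality to each of the $L^\infty$-norms appearing produces in all cases a bound by $C\|\nabla g\|_{H^1}\|\nabla h\|_{H^1}$, which is the assertion.

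The only genuinely delicate point is the first inequality, the endpoint Gagliardo--Nirenberg interpolation in dimension three: one cannot simply invoke $H^{3/2}(\mathbb{R}^3)\hookrightarrow L^\infty$ since that embedding fails, so the frequency-splitting argument above (equivalently, a Littlewood--Paley decomposition) is really needed; everything after that is a mechanical combination of Hölder's inequality and standard Sobolev embeddings. Alternatively, since the statement is classical, one may simply quote \cite[Lemma 2.1]{CDM-krm-2011}.
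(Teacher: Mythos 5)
Your proposal is correct. The paper itself gives no proof of this lemma: it is stated in the appendix purely as a citation of \cite[Lemma 2.1]{CDM-krm-2011}, which is exactly the fallback you mention at the end. Your self-contained argument is the standard one and is sound: the frequency splitting at $R=\|\nabla^2 g\|_{L^2}/\|\nabla g\|_{L^2}$ (with the degenerate case $\nabla g\equiv 0$, hence $g\equiv 0$ for $g\in H^2$, handled trivially) gives the endpoint Gagliardo--Nirenberg bound $\|g\|_{L^\infty}\leq C\|\nabla g\|_{L^2}^{1/2}\|\nabla^2 g\|_{L^2}^{1/2}$ --- note that only Fourier inversion, not Hausdorff--Young, is needed for $\|g\|_{L^\infty}\leq C\|\widehat g\|_{L^1}$ --- and the second and third inequalities then follow, as you show, from the Leibniz rule, H\"older, $\dot H^1(\mathbb{R}^3)\hookrightarrow L^6$, and $H^1(\mathbb{R}^3)\hookrightarrow L^p$ for $2\leq p\leq 6$, with every term correctly landing on $\|\nabla g\|_{H^1}\|\nabla h\|_{H^1}$ (no stray $\|g\|_{L^2}$ factors) in the third estimate.
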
 

\begin{lem}[{See \cite[Lemmas 2.1--2.2]{Dk-MZ-1992}}]\label{LA.2}
There exist positive constants C  such that  for any $g \in H^1(\mathbb{R}^3)$, one has
\begin{align*}
\|g\|_{L^6(\mathbb{R}^3)} \leq\,& C\|\nabla_x g\|_{L^2(\mathbb{R}^3)}\leq C\|g\|_{H^1(\mathbb{R}^3)},
\end{align*}
and
\begin{align*}
  \|g\|_{L^q(\mathbb{R}^3)} \leq\,& C\|g\|_{H^1(\mathbb{R}^3)}, 
\end{align*}
for $2 \leq q \leq 6$.
\end{lem}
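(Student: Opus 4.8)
The statement is the classical Sobolev (Gagliardo--Nirenberg) embedding $H^1(\mathbb{R}^3)\hookrightarrow L^q(\mathbb{R}^3)$ for $2\le q\le 6$, and the plan is to reduce everything to the endpoint estimate $\|g\|_{L^6(\mathbb{R}^3)}\le C\|\nabla_x g\|_{L^2(\mathbb{R}^3)}$ and then deduce the remaining bounds by an elementary interpolation. The inequality $\|\nabla_x g\|_{L^2(\mathbb{R}^3)}\le\|g\|_{H^1(\mathbb{R}^3)}$ is immediate from the definition of the $H^1$-norm, so the only real content is the endpoint bound for the critical exponent $6=\tfrac{2\cdot 3}{3-2}$.

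First I would prove the endpoint estimate for $g\in C_c^\infty(\mathbb{R}^3)$. From $g(x)=\int_{-\infty}^{x_i}\partial_i g\,\mathrm{d}x_i$ for each $i\in\{1,2,3\}$ one gets $|g(x)|\le\int_{\mathbb{R}}|\partial_i g|\,\mathrm{d}x_i=:G_i(\widehat{x}_i)$, a function not depending on $x_i$; hence $|g(x)|^{3/2}\le(G_1G_2G_3)^{1/2}$. Integrating successively in $x_1$, $x_2$, $x_3$ and applying the Cauchy--Schwarz inequality at each step (the Loomis--Whitney-type bookkeeping) yields the $W^{1,1}$-Sobolev inequality $\|g\|_{L^{3/2}(\mathbb{R}^3)}\le C\|\nabla_x g\|_{L^1(\mathbb{R}^3)}$. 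Applying this to $|g|^4$, using $\nabla_x(|g|^4)=4|g|^2g\,\nabla_x g$ together with H\"older's inequality, gives $\|g\|_{L^6}^4=\big\||g|^4\big\|_{L^{3/2}}\le C\big\||g|^3\,|\nabla_x g|\big\|_{L^1}\le C\|g\|_{L^6}^3\|\nabla_x g\|_{L^2}$, whence $\|g\|_{L^6}\le C\|\nabla_x g\|_{L^2}$. The bound then extends to every $g\in H^1(\mathbb{R}^3)$ by density: taking $g_k\in C_c^\infty(\mathbb{R}^3)$ with $g_k\to g$ in $H^1$, so that $\nabla_x g_k\to\nabla_x g$ in $L^2$ and, along a subsequence, $g_k\to g$ almost everywhere, one passes to the limit using Fatou's lemma on the left-hand side.

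Finally, for $2\le q\le 6$ I would interpolate: with $\theta=\tfrac{3}{q}-\tfrac12\in[0,1]$ one has $\tfrac1q=\tfrac{\theta}{2}+\tfrac{1-\theta}{6}$, so by H\"older's inequality and the already-established bounds, $\|g\|_{L^q}\le\|g\|_{L^2}^{\theta}\|g\|_{L^6}^{1-\theta}\le\|g\|_{H^1}^{\theta}\big(C\|g\|_{H^1}\big)^{1-\theta}\le C\|g\|_{H^1}$. I expect the only genuinely delicate point to be the iterated Cauchy--Schwarz estimate in the proof of the $W^{1,1}$-Sobolev inequality, together with the care required in the density step (the right-hand side must converge while the left-hand side needs only lower semicontinuity); all remaining steps are routine. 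Since the result is entirely classical, one may alternatively just invoke \cite{Dk-MZ-1992}, as the paper does, and omit the argument altogether.
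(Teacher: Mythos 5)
Your proof is correct. The paper itself gives no argument for this lemma -- it is simply quoted from Deckelnick \cite[Lemmas 2.1--2.2]{Dk-MZ-1992} as a standard embedding -- so there is no internal proof to compare against; your closing remark that one may just cite the reference is exactly what the authors do. Your self-contained argument is the classical Gagliardo--Nirenberg--Sobolev route and all the bookkeeping checks out: the iterated Cauchy--Schwarz (Loomis--Whitney) step gives $\|g\|_{L^{3/2}}\leq C\|\nabla_x g\|_{L^1}$ for $g\in C_c^\infty$; applying it to $|g|^4$ and using H\"older gives $\|g\|_{L^6}^4\leq C\|g\|_{L^6}^3\|\nabla_x g\|_{L^2}$, and since $\|g\|_{L^6}$ is finite for compactly supported smooth $g$ you may divide to obtain the endpoint bound; the density-plus-Fatou extension to $H^1(\mathbb{R}^3)$ is handled with the right amount of care (strong convergence on the right, lower semicontinuity on the left); and the interpolation exponent $\theta=\tfrac{3}{q}-\tfrac12\in[0,1]$ for $2\leq q\leq 6$ is the correct one. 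Nothing is missing.
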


\begin{lem}[see \cite{AF-Pa-2003,MB-book-2002}]\label{LLA.3}
Let $k\geq 1$ be an integer, then we have
\begin{align*}
\|\nabla^{k}(gh) \|_{L^r\left(\mathbb{R}^3\right)} & \leq C\|g\|_{L^{r_1}\left(\mathbb{R}^3\right)}\|\nabla^{k}h\|_{L^{r_2}\left(\mathbb{R}^3\right)}+C\|h\|_{L^{r_3}\left(\mathbb{R}^3\right)}\|\nabla^{k}g\|_{L^{r_4}\left(\mathbb{R}^3\right)},
\end{align*}
and
\begin{align*}
\|\nabla^{k}(gh)-g\nabla^k h \|_{L^r\left(\mathbb{R}^3\right)} & \leq C\|\nabla g\|_{L^{r_1}\left(\mathbb{R}^3\right)}\|\nabla^{k-1}h\|_{L^{r_2}\left(\mathbb{R}^3\right)}+C\|h\|_{L^{r_3}\left(\mathbb{R}^3\right)}\|\nabla^{k}g\|_{L^{r_4}\left(\mathbb{R}^3\right)},    
\end{align*}
with $1<r,r_2,r_4<\infty$ and $r_i(1\leq i\leq 4)$ satisfy the following identity:
\begin{align*}
\frac{1}{r_1}+\frac{1}{r_2}=\frac{1}{r_3}+\frac{1}{r_4}=\frac{1}{r}. 
\end{align*}
\end{lem}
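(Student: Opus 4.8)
The plan is to derive both inequalities from the Leibniz rule together with H\"older's inequality and the Gagliardo--Nirenberg interpolation inequality on $\mathbb{R}^3$, exactly as in \cite{MB-book-2002,AF-Pa-2003}. First I would expand
\[
\nabla^k(gh)=\sum_{l=0}^{k}\binom{k}{l}\,\nabla^l g\,\nabla^{k-l}h ,
\]
so that it suffices to bound each summand $\|\nabla^l g\,\nabla^{k-l}h\|_{L^r}$ by the right-hand side of the lemma. The two endpoints $l=0$ and $l=k$ give $\|g\|_{L^{r_1}}\|\nabla^k h\|_{L^{r_2}}$ and $\|\nabla^k g\|_{L^{r_4}}\|h\|_{L^{r_3}}$ directly from H\"older's inequality, using $\tfrac1{r_1}+\tfrac1{r_2}=\tfrac1{r_3}+\tfrac1{r_4}=\tfrac1r$.

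For $0<l<k$ I would apply H\"older with a pair of intermediate exponents $a,b$ prescribed by
\[
\frac1a=\frac{k-l}{k r_1}+\frac{l}{k r_4},\qquad \frac1b=\frac{l}{k r_3}+\frac{k-l}{k r_2},
\]
and note that the hypothesis on the $r_i$ forces $\tfrac1a+\tfrac1b=\tfrac1r$, so $\|\nabla^l g\,\nabla^{k-l}h\|_{L^r}\le\|\nabla^l g\|_{L^a}\|\nabla^{k-l}h\|_{L^b}$. Then Gagliardo--Nirenberg (interpolating $\nabla^l g$ between $L^{r_1}$ and $\dot W^{k,r_4}$, resp. $\nabla^{k-l}h$ between $L^{r_3}$ and $\dot W^{k,r_2}$) gives
\[
\|\nabla^l g\|_{L^a}\le C\|g\|_{L^{r_1}}^{\,1-\theta_1}\|\nabla^k g\|_{L^{r_4}}^{\,\theta_1},\qquad
\|\nabla^{k-l}h\|_{L^b}\le C\|h\|_{L^{r_3}}^{\,1-\theta_2}\|\nabla^k h\|_{L^{r_2}}^{\,\theta_2},
\]
with $\theta_1=\tfrac lk$, $\theta_2=\tfrac{k-l}k$ (one checks the defining scaling relations of Gagliardo--Nirenberg hold precisely for these values and the chosen $a,b$). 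Since $\theta_1+\theta_2=1$ and $1-\theta_1=\theta_2$, $1-\theta_2=\theta_1$, the product rewrites as
\[
\big(\|g\|_{L^{r_1}}\|\nabla^k h\|_{L^{r_2}}\big)^{\theta_2}\big(\|\nabla^k g\|_{L^{r_4}}\|h\|_{L^{r_3}}\big)^{\theta_1},
\]
and Young's inequality with weights $(\theta_2,\theta_1)$ bounds it by $C\big(\|g\|_{L^{r_1}}\|\nabla^k h\|_{L^{r_2}}+\|\nabla^k g\|_{L^{r_4}}\|h\|_{L^{r_3}}\big)$. Summing over $l$ yields the first inequality.

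For the commutator inequality I would use the same Leibniz expansion, observing that the $l=0$ term is exactly $g\nabla^k h$, so
\[
\nabla^k(gh)-g\nabla^k h=\sum_{l=1}^{k}\binom{k}{l}\,\nabla^l g\,\nabla^{k-l}h ,
\]
and every surviving summand carries at least one derivative on $g$ and at most $k-1$ on $h$. I would then repeat the argument above, but now interpolate $\nabla^l g$ between $\dot W^{1,r_1}$ and $\dot W^{k,r_4}$ (weight $\sigma_1=\tfrac{l-1}{k-1}$) and $\nabla^{k-l}h$ between $L^{r_3}$ and $\dot W^{k-1,r_2}$ (weight $\sigma_2=\tfrac{k-l}{k-1}$), with the intermediate H\"older exponents adjusted so that $\tfrac1a+\tfrac1b=\tfrac1r$ again holds; since $\sigma_1+\sigma_2=1$ the same regrouping and Young's inequality give the claimed bound with $\|\nabla g\|_{L^{r_1}}$ and $\|\nabla^{k-1}h\|_{L^{r_2}}$. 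The cases $k=1$ (where the commutator is $h\nabla g$ and H\"older suffices) and the endpoints $l=1$, $l=k$ are handled by a direct H\"older estimate.

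The main obstacle is purely the bookkeeping at borderline exponents: Gagliardo--Nirenberg degenerates, or forces the non-interpolated factor into $L^\infty$, when the intermediate exponents $a$ or $b$ hit $1$ or $\infty$, or when $r_2,r_4$ sit at a Sobolev-critical value relative to the scaling; these excluded cases are exactly why the hypothesis requires $1<r,r_2,r_4<\infty$, and in each of them one falls back to H\"older combined with the embedding $\dot W^{k,p}\hookrightarrow L^\infty$ (for $kp>3$) rather than to an interpolation step. Once the exponents are confirmed admissible, no further ideas are needed --- the estimate is a finite sum of H\"older--Gagliardo--Nirenberg--Young applications.
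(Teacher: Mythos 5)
Your argument is correct: the Leibniz expansion, H\"older with the intermediate exponents $\tfrac1a=\tfrac{k-l}{kr_1}+\tfrac{l}{kr_4}$, $\tfrac1b=\tfrac{l}{kr_3}+\tfrac{k-l}{kr_2}$, Gagliardo--Nirenberg with weights $l/k$ (resp.\ $(l-1)/(k-1)$ for the commutator), and Young's inequality is exactly the standard proof of these Moser-type product and commutator estimates. The paper does not prove the lemma itself but cites it to the references \cite{AF-Pa-2003,MB-book-2002}, and your proof reproduces the argument given there, so it is essentially the same approach.
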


\begin{lem}[see\cite{AF-Pa-2003}]\label{LLA.4}
Suppose that $1\leq r\leq s\leq q\leq \infty$, and
\begin{align*}
\frac{1}{s}=\frac{\zeta}{r}+\frac{1-\zeta}{q},
\end{align*}
where $0\leq \zeta\leq 1$.
Assume also $g\in L^r(\mathbb{R}^3)\cap L^q(\mathbb{R}^3)$. Then $g\in L^s(\mathbb{R}^3)$, and
\begin{align*}
\|g\|_{L^s}\leq \|g\|_{L^r}^\zeta \|g\|_{L^q}^{1-\zeta}.    
\end{align*}
\end{lem}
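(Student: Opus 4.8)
The plan is to reduce the claimed bound to a single application of H\"older's inequality after splitting the integrand multiplicatively, and then to dispose of the degenerate cases by hand. I would treat first the generic situation $0<\zeta<1$ with $r,q<\infty$ (note $r\le s\le q$ then forces $s<\infty$ as well). Writing $|g|^s=|g|^{\zeta s}\,|g|^{(1-\zeta)s}$ pointwise, I would apply H\"older's inequality on $\mathbb{R}^3$ with the exponents $p_1=\frac{r}{\zeta s}$ and $p_2=\frac{q}{(1-\zeta)s}$. The hypothesis $\frac1s=\frac{\zeta}{r}+\frac{1-\zeta}{q}$ is precisely what yields $\frac1{p_1}+\frac1{p_2}=\frac{\zeta s}{r}+\frac{(1-\zeta)s}{q}=1$; moreover $\frac{\zeta}{r}\le\frac1s$ and $\frac{1-\zeta}{q}\le\frac1s$ both follow from the same identity (each summand on its right-hand side being nonnegative), so $p_1,p_2\ge1$ and the pair is admissible. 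This produces
\[
\int_{\mathbb{R}^3}|g|^s\,{\rm d}x\le\Big(\int_{\mathbb{R}^3}|g|^{\zeta s p_1}\,{\rm d}x\Big)^{1/p_1}\Big(\int_{\mathbb{R}^3}|g|^{(1-\zeta)s p_2}\,{\rm d}x\Big)^{1/p_2}=\|g\|_{L^r}^{\zeta s}\,\|g\|_{L^q}^{(1-\zeta)s},
\]
since $\zeta s p_1=r$, $(1-\zeta)s p_2=q$, and $\frac{s}{r p_1}=\zeta$, $\frac{s}{q p_2}=1-\zeta$. In particular the left-hand side is finite, so $g\in L^s(\mathbb{R}^3)$, and taking $s$-th roots gives $\|g\|_{L^s}\le\|g\|_{L^r}^{\zeta}\|g\|_{L^q}^{1-\zeta}$.

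Next I would handle the endpoints, which lie outside the two-exponent H\"older step. If $\zeta=0$ then $\frac1s=\frac1q$, i.e.\ $s=q$, and the inequality is an equality; likewise $\zeta=1$ gives $s=r$. If $q=\infty$ (and necessarily $r<\infty$, else $s=\infty$ and the claim is trivial), then $\frac1s=\frac{\zeta}{r}$, i.e.\ $\zeta s=r$, and the pointwise bound $|g|^s=|g|^{r}\,|g|^{(1-\zeta)s}\le|g|^{r}\,\|g\|_{L^\infty}^{(1-\zeta)s}$ integrates directly to
\[
\|g\|_{L^s}^s=\int_{\mathbb{R}^3}|g|^{r}\,|g|^{(1-\zeta)s}\,{\rm d}x\le\|g\|_{L^\infty}^{(1-\zeta)s}\int_{\mathbb{R}^3}|g|^{r}\,{\rm d}x=\|g\|_{L^r}^{\zeta s}\,\|g\|_{L^\infty}^{(1-\zeta)s},
\]
which is the desired inequality after taking $s$-th roots. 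The case $r=q<\infty$ (so $s=r$) is again an identity.

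There is no genuine obstacle here: the statement is the classical log-convexity of $\frac1p\mapsto\log\|g\|_{L^p}$, and the only care required is the bookkeeping of the H\"older exponents $p_1,p_2$ and the separate treatment of the endpoint values $\zeta\in\{0,1\}$ and $q=\infty$. Since the result is quoted from \cite{AF-Pa-2003}, one may alternatively simply cite it; the short argument above would be included only for the reader's convenience.
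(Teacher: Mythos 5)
Your proof is correct: the exponent bookkeeping for H\"older's inequality checks out, and the endpoint cases $\zeta\in\{0,1\}$ and $q=\infty$ are handled properly. The paper itself gives no proof, simply quoting the lemma from \cite{AF-Pa-2003}, and your argument is precisely the standard H\"older/log-convexity proof found there, so there is nothing to reconcile.
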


\begin{lem} [{See \cite[Lemma 3.2]{CDM-krm-2011}}]\label{LA.3}   
Given any $0<\beta_1\ne 1$ and $\beta_2>1$, then
\begin{align*}
\int_0^t (1+t-s)^{-\beta_1}(1+s)^{-\beta_2} {\rm d}s \leq C(1+t)^{-\min\{\beta_1,\beta_2\}}, 
\end{align*}
for all $t\geq 0$.
\end{lem}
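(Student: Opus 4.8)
The plan is to prove the estimate by the classical device of splitting the time integral at the midpoint $s=t/2$ and exploiting that on each half one of the two weights is comparable to $(1+t)$. First I would dispose of the trivial range $0\le t\le 1$: there $1+t-s\ge 1$ and $1+s\ge 1$ for every $s\in[0,t]$, so the integrand is bounded by $1$, the integral is $\le t\le 1$, and since $(1+t)^{-\min\{\beta_1,\beta_2\}}\ge 2^{-\min\{\beta_1,\beta_2\}}$ the asserted inequality holds with $C=2^{\min\{\beta_1,\beta_2\}}$. So from here on I assume $t\ge 1$ and write
\[
\int_0^t(1+t-s)^{-\beta_1}(1+s)^{-\beta_2}\,\mathrm{d}s=\Big(\int_0^{t/2}+\int_{t/2}^t\Big)(1+t-s)^{-\beta_1}(1+s)^{-\beta_2}\,\mathrm{d}s=:J_1+J_2.
\]

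For $J_1$, when $s\le t/2$ one has $1+t-s\ge 1+t/2\ge\tfrac12(1+t)$, hence $(1+t-s)^{-\beta_1}\le 2^{\beta_1}(1+t)^{-\beta_1}$, and therefore $J_1\le 2^{\beta_1}(1+t)^{-\beta_1}\int_0^\infty(1+s)^{-\beta_2}\,\mathrm{d}s=\tfrac{2^{\beta_1}}{\beta_2-1}(1+t)^{-\beta_1}$, where the improper integral converges precisely because $\beta_2>1$. In particular $J_1\le C(1+t)^{-\min\{\beta_1,\beta_2\}}$. For $J_2$, when $s\ge t/2$ we have $1+s\ge\tfrac12(1+t)$, so $(1+s)^{-\beta_2}\le 2^{\beta_2}(1+t)^{-\beta_2}$ and, after the substitution $\tau=t-s$, $J_2\le 2^{\beta_2}(1+t)^{-\beta_2}\int_0^{t/2}(1+\tau)^{-\beta_1}\,\mathrm{d}\tau$.

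Here the hypothesis $\beta_1\ne 1$ enters through a case distinction on the $\tau$-integral: if $\beta_1>1$ it is bounded by $(\beta_1-1)^{-1}$, giving $J_2\le C(1+t)^{-\beta_2}\le C(1+t)^{-\min\{\beta_1,\beta_2\}}$; if $0<\beta_1<1$ it is $\le(1-\beta_1)^{-1}(1+t/2)^{1-\beta_1}\le C(1+t)^{1-\beta_1}$, so $J_2\le C(1+t)^{1-\beta_1-\beta_2}$, and since $\beta_2>1$ one has $1-\beta_1-\beta_2<-\beta_1=-\min\{\beta_1,\beta_2\}$, whence again $J_2\le C(1+t)^{-\min\{\beta_1,\beta_2\}}$. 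Adding the bounds for $J_1$ and $J_2$ yields the lemma.

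The whole argument is elementary, and the only step needing attention is the case distinction for $J_2$ according to whether $\beta_1$ lies above or below $1$, together with the use of $\beta_2>1$ to absorb the extra growth factor $(1+t)^{1-\beta_1}$ when $\beta_1<1$; this is also precisely where one sees that the excluded value $\beta_1=1$ is genuinely necessary, since there $\int_0^{t/2}(1+\tau)^{-1}\,\mathrm{d}\tau=\log(1+t/2)$ and the estimate would only survive with an additional logarithmic factor.
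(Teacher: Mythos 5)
Your proof is correct, and it is the standard midpoint-splitting argument that the cited reference (and the paper, which states the lemma without proof, quoting \cite[Lemma 3.2]{CDM-krm-2011}) relies on: split at $s=t/2$, use $1+t-s\gtrsim 1+t$ on the first half and $1+s\gtrsim 1+t$ on the second, with the case distinction $\beta_1\gtrless 1$ for the remaining $\tau$-integral. Only your closing aside is slightly off: for $\beta_1=1$ the extra factor $\log(1+t/2)$ is in fact absorbed by $(1+t)^{-\beta_2}$ since $\beta_2>1$, so the exclusion $\beta_1\ne 1$ is a convenience of the stated hypotheses rather than genuinely necessary, but this does not affect the validity of your argument.
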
 

\begin{lem} [{See \cite[Lemma 3.3]{CDM-krm-2011}}]\label{LA.4}   
Let $\gamma>1$ and $g_1,g_2\in C(\mathbb{R}_+,\mathbb{R}_+)$ with
$g_1(0)=0$. For $A\in \mathbb{R}_+$, define $\mathcal{B}_{A}:=\{y\in C(\mathbb{R}_+,(\mathbb{R}_+)|\, \,y\leq A+g_1(A)y+g_2(A)y^{\gamma},\,\, y(0)\leq A\}$.
Then, there exists a constant $A_0\in (0,\,\min\{A_1,A_2\})$ such that for
any $0<A\leq A_0$, 
\begin{align*}
y\in \mathcal{B}_A \Rightarrow \sup_{t\geq 0}y(t)\leq 2A.
\end{align*}
\end{lem}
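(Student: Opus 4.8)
The statement is an abstract bootstrap lemma, so the natural route is a continuity argument on the maximal sub-interval of $\mathbb{R}_+$ on which $y$ stays below $2A$, combined with a careful choice of the smallness threshold $A_0$ that makes the resulting inequality strict. The only structural input is the pointwise bound $y(t)\le A+g_1(A)y(t)+g_2(A)y(t)^\gamma$ (valid for all $t\ge0$ since $y\in\mathcal{B}_A$), the continuity of $y$, $g_1$, $g_2$, the normalization $g_1(0)=0$, and the superlinearity $\gamma>1$.

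\textbf{Step 1: fixing the thresholds.} Since $g_1$ is continuous with $g_1(0)=0$, there is $A_1>0$ with $g_1(A)\le\tfrac14$ for $0<A\le A_1$. Since $g_2$ is continuous it is bounded on $[0,A_1]$, say by $M\ge0$; because $\gamma>1$ we have $2^{\gamma-1}A^{\gamma-1}M\to0$ as $A\to0$, so there is $A_2\in(0,A_1]$ with $2^{\gamma-1}A^{\gamma-1}g_2(A)<\tfrac14$ for $0<A\le A_2$. I would then take any $A_0\in(0,\min\{A_1,A_2\})$. These choices are arranged so that
\begin{align*}
2A\,g_1(A)+(2A)^\gamma g_2(A)=2A\,g_1(A)+2A\cdot 2^{\gamma-1}A^{\gamma-1}g_2(A)<\tfrac{A}{2}+\tfrac{A}{2}=A
\end{align*}
for every $0<A\le A_0$.

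\textbf{Step 2: the continuity argument.} Fix $0<A\le A_0$ and $y\in\mathcal{B}_A$, and set $T_\star:=\sup\{T\ge0:\ y(t)\le2A\ \text{for all}\ t\in[0,T]\}$. Since $y$ is continuous and $y(0)\le A<2A$, we have $T_\star>0$. Suppose for contradiction that $T_\star<\infty$; then by continuity and maximality $y(t)\le2A$ on $[0,T_\star]$ and $y(T_\star)=2A$. Plugging $y(T_\star)=2A\le2A_0$ into the defining inequality of $\mathcal{B}_A$ and using Step 1,
\begin{align*}
2A=y(T_\star)\le A+g_1(A)\,y(T_\star)+g_2(A)\,y(T_\star)^\gamma=A+2A\,g_1(A)+(2A)^\gamma g_2(A)<A+A=2A,
\end{align*}
a contradiction. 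Hence $T_\star=\infty$, i.e. $\sup_{t\ge0}y(t)\le2A$, which is the claim.

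\textbf{Expected difficulty.} There is essentially no serious obstacle: the argument is a textbook continuity/bootstrap. The single point demanding care is to make the final estimate \emph{strict}, which is precisely why $A_0$ is taken below $\min\{A_1,A_2\}$ and why the hypothesis $\gamma>1$ (not merely $\gamma\ge1$) is indispensable — it is exactly what allows the superlinear term $g_2(A)y^\gamma$ to be absorbed after comparing with $y$. I would also remark that the conclusion only needs $y\in C(\mathbb{R}_+;\mathbb{R}_+)$ with $y(0)\le A$; no monotonicity or a priori finiteness of $\sup y$ is used, since the contradiction is obtained at the finite time $T_\star$ itself.
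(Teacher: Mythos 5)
Your proof is correct: the continuity/bootstrap argument with the strict inequality at the would-be first crossing time $T_\star$ is exactly the standard proof of this lemma, and the paper itself gives no proof but only cites \cite[Lemma 3.3]{CDM-krm-2011}, whose argument is of the same type. A small bonus of your write-up is that you explicitly construct the thresholds $A_1$ (from $g_1(0)=0$) and $A_2$ (from $\gamma>1$ and the local boundedness of $g_2$), which the paper's statement uses without defining.
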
 

\medskip
Finally, 
for a  function $g(x)\in L^2(\mathbb{R}^3)$, we define the   \emph{low and high frequencies decompositions
$g^L$ and $g^H$ } of 
it as 
\begin{align*}
g^{L}(x)=\phi_0(D_x)g(x),\quad g^{H}(x)=\phi_1(D_x)g(x).
\end{align*}
Here, $D_x=\frac{1}{\sqrt{-1}}{(\partial_{x_1},\partial_{x_2},\partial_{x_3})}$, and  $\phi_0(D_x)$ 
and $\phi_1(D_x)$ are two pseudo-differential operators
corresponding to the smooth cut-off functions $\phi_0(k)$ and $\phi_1(k)$ satisfying
$0\leq \phi_0(k)\leq 1$, $\phi_1(k)=1-\phi_0(k)$, where
\begin{align*}
\phi_0(k)=\begin{cases}1, \quad|k|\leq \frac{r_0}{2},\\0, \quad|k|>{r_0},\end{cases}
\end{align*}
for a fixed constant $r_0>0$. It is obvious that
\begin{align*}
 g(x)=g^{L}(x)+g^{H}(x),   
\end{align*}
which together with Plancherel Theorem gives rise to the following lemma.
\begin{lem}[see \cite{WW-SCM-2022}]\label{LA.7}
Let $g\in H^2(\mathbb{R}^3)$. Then, we have
\begin{gather*}
 \|g^H\|_{L^2}\leq C\|\nabla g\|_{L^2},\quad \|g^H\|_{L^2}\leq C\|\nabla^2 g\|_{L^2}, \\
 \|\nabla^2 g^L\|_{L^2}\leq C\|\nabla g^L\|_{L^2}, 
\end{gather*}
for some constant $C>0$.
\end{lem}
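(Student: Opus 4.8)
The plan is to establish all three inequalities by the standard Bernstein-type argument: pass to the frequency side via Plancherel's theorem, exploit the explicit supports of the cut-off symbols $\phi_0$ and $\phi_1$ to trade powers of $|k|$, and pass back. Recall from the definitions preceding the lemma that $\mathcal{F}(g^{L})(k)=\phi_0(k)\widehat g(k)$ and $\mathcal{F}(g^{H})(k)=\phi_1(k)\widehat g(k)$, that $0\le\phi_0,\phi_1\le 1$, that $\phi_1$ is supported in $\{|k|\ge r_0/2\}$, and that $\phi_0$ is supported in $\{|k|\le r_0\}$; moreover, by Plancherel's identity, $\|\nabla^{\ell}h\|_{L^2}^2=\int_{\mathbb{R}^3}|k|^{2\ell}|\widehat h(k)|^2\,\mathrm{d}k$ for $\ell=1,2$ (the case $\ell=2$ using $\sum_{j,l}|k_jk_l|^2=|k|^4$).

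First I would treat the two high-frequency bounds. On $\mathrm{supp}\,\phi_1$ one has $1\le (2/r_0)|k|$, and hence also $1\le(2/r_0)^2|k|^2$. Combining these pointwise inequalities with $\phi_1\le1$ and Plancherel gives
\[
\|g^{H}\|_{L^2}^2=\int_{\mathbb{R}^3}\phi_1(k)^2|\widehat g(k)|^2\,\mathrm{d}k\le\frac{4}{r_0^2}\int_{\mathbb{R}^3}|k|^2|\widehat g(k)|^2\,\mathrm{d}k=\frac{4}{r_0^2}\|\nabla g\|_{L^2}^2,
\]
and, using $\phi_1(k)^2\le (2/r_0)^4|k|^4$ on the support instead,
\[
\|g^{H}\|_{L^2}^2\le\frac{16}{r_0^4}\int_{\mathbb{R}^3}|k|^4|\widehat g(k)|^2\,\mathrm{d}k=\frac{16}{r_0^4}\|\nabla^2 g\|_{L^2}^2.
\]

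Next I would handle the low-frequency estimate by reversing the trade: on $\mathrm{supp}\,\phi_0$ one has $|k|^2\le r_0^2$, so
\[
\|\nabla^2 g^{L}\|_{L^2}^2=\int_{\mathbb{R}^3}|k|^4\phi_0(k)^2|\widehat g(k)|^2\,\mathrm{d}k\le r_0^2\int_{\mathbb{R}^3}|k|^2\phi_0(k)^2|\widehat g(k)|^2\,\mathrm{d}k=r_0^2\|\nabla g^{L}\|_{L^2}^2.
\]
Taking square roots in the three displayed estimates and setting $C=\max\{2/r_0,\,4/r_0^2,\,r_0\}$ yields the claimed inequalities. There is no genuine obstacle in this argument; the only items requiring a little attention are the bookkeeping that identifies $\int_{\mathbb{R}^3}|k|^4|\widehat g(k)|^2\,\mathrm{d}k$ with $\|\nabla^2 g\|_{L^2}^2$ (i.e.\ the tensorial meaning of $\nabla^2$), and the observation that the resulting constant $C$ depends only on the fixed cut-off threshold $r_0$ appearing in the definition of $\phi_0$ and $\phi_1$, hence is independent of $g$.
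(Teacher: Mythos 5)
Your proof is correct and is essentially the argument the paper itself indicates: it states the lemma follows from the decomposition $g=g^L+g^H$ "together with Plancherel Theorem" (citing \cite{WW-SCM-2022}), which is exactly your frequency-support/Bernstein computation. The only cosmetic point is that with the paper's Fourier normalization Plancherel carries a factor $(2\pi)^{-3}$ on each side, which cancels and leaves your constants depending only on $r_0$ unchanged.
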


 \bigskip 
	\noindent
{\bf Acknowledgements:} 
The authors are very grateful to the reviewer  for his/her constructive comments and helpful suggestions, which considerably improved the earlier version of this paper.
Li and   Ni  are supported by NSFC (Grant No.  12331007).  
 Li is also supported by the “333 Project" of Jiangsu Province.
Wu is supported by the Basic Science (Natural Science) Research Project of Colleges and Universities of Jiangsu Province (Grant No. 24KJD110004) and Shandong Provincial Natural Science Foundation (Grant No. ZR2024MA078).

\vspace{2mm}
		\noindent
	 \noindent \textbf{Conflict of interest.} The authors have no conflicts of interest.

	\vspace{2mm}

	\noindent \textbf{Data availability statement.}
	 Data sharing is not applicable to this article as no data sets were generated or analyzed during the current study.

\bibliographystyle{plain}

\end{document}